\newtheorem*{thm*}{Theorem}
\newtheorem*{conj*}{Conjecture}
\newtheorem{theorem}{Theorem}[section]
\newtheorem{lemma}[theorem]{Lemma}
\newtheorem{conj}[theorem]{Conjecture}
\newtheorem{remark}[theorem]{Remark}
\newtheorem{corollary}[theorem]{Corollary}
\newcommand{\Z}{\mathbb{Z}}
\newcommand{\Q}{\mathbb{Q}}
\newcommand{\R}{\mathbb{R}}
 \newcommand{\Sym}{\mathrm{Sym}}
\DeclareMathOperator{\Li}{Li}
\DeclareMathOperator{\ST}{ST}
\DeclareMathOperator{\Res}{Res}
\DeclareMathOperator{\Real}{Re}
\DeclareMathOperator{\Imag}{Im}
\renewcommand{\Re}{\Real}
\renewcommand{\Im}{\Imag}
\DeclarePairedDelimiterX{\pmodx}[1]{(}{)}{{\operator@font mod}\mkern6mu#1}
\renewcommand{\pmod}{%
  \allowbreak
  \if@display\mkern18mu\else\mkern8mu\fi
  \pmodx
}
\numberwithin{equation}{section}
\begin{document}
\title[An unconditional explicit bound on the error in Sato--Tate]{An Unconditional Explicit Bound on the Error Term in the Sato--Tate Conjecture}

\author[A. Hoey, J. Iskander, S. Jin, F. Trejos Su\'arez]{Alexandra Hoey, Jonas Iskander, Steven Jin, Fernando Trejos Su\'arez}
\address{Department of Mathematics, Massachusetts Institute of Technology, Cambridge, MA 02139}
\email[A. Hoey]{ahoey@mit.edu}
\address{Department of Mathematics, Harvard University, Cambridge, MA 02138}
\email[J. Iskander]{jonasiskander@college.harvard.edu}
\address{Department of Mathematics, University of Maryland, College Park, MD 20742}
\email[S. Jin]{sjin6816@umd.edu}
\address{Department of Mathematics, Yale University, New Haven, CT 06511}
\email[F. Trejos Su\'arez]{fernando.trejos@yale.edu}

\begin{abstract}
Let $f(z) = \sum_{n=1}^\infty a_f(n)q^n$ be a holomorphic cuspidal newform with even integral weight $k\geq 2$, level $N$, trivial nebentypus, and no complex multiplication (CM). For all primes $p$, we may define $\theta_p\in [0,\pi]$ such that $a_f(p) = 2p^{(k-1)/2}\cos \theta_p$. The Sato--Tate conjecture states that the angles $\theta_p$ are equidistributed with respect to the probability measure $\mu_{\textrm{ST}}(I) = \frac{2}{\pi}\int_I \sin^2 \theta \; d\theta$, where $I\subseteq [0,\pi]$. Using recent results on the automorphy of symmetric power $L$-functions due to Newton and Thorne, we explicitly bound the error term in the Sato--Tate conjecture when $f$ corresponds to an elliptic curve over $\Q$ of arbitrary conductor or when $f$ has squarefree level. In these cases, if $\pi_{f,I}(x) := \#\{ p \leq x : p \nmid N, \theta_p\in I\}$, and $\pi(x) := \# \{ p \leq x \}$, we prove the following bound:
\begin{equation*}
     \left| \frac{\pi_{f,I}(x)}{\pi(x)} - \mu_{\textrm{ST}}(I)\right| \leq 58.1\frac{\log((k-1)N \log{x})}{\sqrt{\log{x}}} \qquad \text{for} \quad x \geq 3.
\end{equation*}
 As an application, we give an explicit bound for the number of primes up to $x$ that violate the Atkin--Serre conjecture for $f$.
\end{abstract}
\maketitle

\section{Introduction and statement of results} 

Let $f(z) = \sum_{n=1}^\infty a_f(n) e^{2\pi inz} \in S_{k}^{\text{new}}(\Gamma_0(N))$ be a non-CM holomorphic cuspidal newform with trivial nebentypus, level $N$, and even integral weight $k \geq 2$. Deligne's proof of the Weil conjectures implies the Weil--Deligne bound $|a_f(p)|\leq 2p^{(k-1)/2}$ at all primes $p$ and that $a_f(n)\in \mathbb{R}$ for all $n$. Consequently, we may define $\theta_p \in[0,\pi]$ such that $a_f(p) = 2p^{(k-1)/2}\cos \theta_p$. It is natural to ask how the angles $\theta_p$ are distributed. In particular, given an interval $I \subseteq [0,\pi]$, we wish to understand the behavior of $\pi_{f,I}(x) := \# \{p \leq x : p \nmid N,\ \theta_p \in I \}$. The Sato--Tate conjecture, now a theorem due to Barnet--Lamb, Geraghty, Harris, and Taylor \cite{BGHT}, asserts that
\begin{equation}
    \pi_{f,I}(x) \sim \mu_{\ST}(I)\pi(x), 
\end{equation}
where $\mu_{\ST}(I)$ is the probability measure given by $\frac{2}{\pi }\int_I \sin ^2 \theta\, d\theta$, and $\pi(x)$ is the ordinary prime-counting function.

Despite the successful proof, unconditional effective error bounds remained unattainable without knowing that the symmetric power $L$-functions $L(s,\Sym^m f)$ have particular analytic properties for any $m \geq 1$ (for instance, that they have an analytic continuation to $\mathbb{C}$ and satisfy a functional equation). By Langlands functoriality, these analytic properties would be immediately implied by the fact that the $m$-th symmetric power lift $\Sym^m \pi_f$ corresponds to a cuspidal automorphic representation of $GL_{m+1}(\mathbb{A}_{\mathbb{Q}})$, where $\pi_f$ is the automorphic representation of $GL_2(\mathbb{A}_{\mathbb{Q}})$ corresponding to $f$. Though it was known that any symmetric power $L$-function becomes automorphic after base change to a suitable number field, the result of \cite{BGHT} could not be made effective without first making this base change constructive; the proof of automorphy would remove the need for a base change altogether.

Let $N$ be squarefree. Assuming the automorphy of symmetric power $L$-functions over $\mathbb{Q}$ and the Generalized Riemann Hypothesis, Rouse and Thorner \cite{RT} proved the explicit error bound
\begin{equation*}
    |\pi_{f,I}(x)-\mu_{\ST}(I)\mathrm{Li}(x)| \leq 3.33x^{\frac{3}{4}}- \frac{3x^{\frac{3}{4}}\log \log x}{\log x}+\frac{202x^{\frac{3}{4}}\log((k-1)N)}{\log x}\qquad \text{for} \quad x \geq 2.
\end{equation*}
The automorphy of the symmetric power $L$-functions $L(s,\Sym^m f)$ over $\mathbb{Q}$ had long been expected for all $m$, but was known until recently only for $m\leq 8$ \cite{CT, GH, K, KS}. In 2019 and 2020, the result for all $m$ arrived in a pair of breakthrough papers of Newton and Thorne \cite{NT, NT2}. These made unconditional a result due to Thorner \cite{T2}, which states that for fixed $f$ and $I$ and for any $\varepsilon > 0$, there exist effectively computable constants $c_{1,\varepsilon}, c_{2,\varepsilon} > 0$ depending on $f$ such that
\begin{align*}
|\pi_{f,I}(x)-\mu_{\ST}(I)\pi(x)|\leq c_{1,\varepsilon} \pi(x)(\log x)^{-\frac{1}{8}+\varepsilon}\qquad \text{for} \quad x \geq c_{2,\varepsilon}.
\end{align*} In 2021, Thorner \cite{T} showed that there exists an effectively computable absolute constant $\Cr{c_{100}}$ such that
\begin{align*}
    \left|\pi_{f,I}(x) - \mu_{\ST}(I)\pi(x)\right|\leq \Cr{c_{100}}\pi(x) \frac{\log((k-1)N \log{x})}{\sqrt{\log{x}}} \qquad \text{for} \quad x \geq 3.
\end{align*}

In this paper, we make this constant $\Cr{c_{100}}$ explicit. In particular, we prove the following theorems, which constitute the first unconditional explicit bounds on the error term in the Sato--Tate conjecture.
\begin{theorem}\label{main2}
Let $f(z) = \sum_{n=1}^\infty a_f(n) q^n\in S_{k}^{\text{new}}(\Gamma_0(N))$ be a non-CM holomorphic cuspidal newform with trivial nebentypus, squarefree level $N$, and even integral weight $k \geq 2$. Then for $x\geq 3$, we have
\begin{equation*}
    \left| \pi_{f,I}(x) - \mu_{\ST}(I)\pi(x)\right| \leq 58.1\; \pi(x)\frac{\log((k-1) N \log{x})}{\sqrt{\log{x}}}.
\end{equation*}
\end{theorem}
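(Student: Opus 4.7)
The plan is to follow the outline of Thorner \cite{T} while making every constant explicit. The starting point is to approximate the indicator $\mathbbm{1}_I(\theta)$ from above and below by even trigonometric polynomials of degree $M$ expanded in Chebyshev polynomials of the second kind,
\begin{equation*}
F^{\pm}_M(\theta) = \sum_{m=0}^{M} \widehat{F}^{\pm}_M(m)\, U_m(\cos\theta),\qquad F^{-}_M(\theta)\le \mathbbm{1}_I(\theta)\le F^{+}_M(\theta),
\end{equation*}
where a Beurling--Selberg/Vaaler-type construction yields an explicit bound $|\widehat{F}^{\pm}_M(0)-\mu_{\ST}(I)|\ll 1/M$ together with explicit coefficient estimates for $m\ge 1$. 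Summing $F^{\pm}_M(\theta_p)$ over $p\le x$, $p\nmid N$, then reduces the theorem to controlling the prime sums
\begin{equation*}
S_m(x) := \sum_{\substack{p\le x\\ p\nmid N}} U_m(\cos\theta_p) = \sum_{\substack{p\le x\\ p\nmid N}} \frac{a_{\Sym^m f}(p)}{p^{m(k-1)/2}}
\end{equation*}
for $1\le m\le M$, modulo a boundary loss of size $\ll \pi(x)/M$ coming from the majorant/minorant gap.

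The second step is to bound each $S_m(x)$ by a Perron-type contour argument applied to $-L'/L(s,\Sym^m f)$. Here the Newton--Thorne theorems \cite{NT,NT2} are used decisively: they supply cuspidal automorphy of $\Sym^m f$ on $GL_{m+1}(\mathbb{A}_\Q)$ for every $m\ge 1$, and hence analytic continuation, functional equation, and standard convexity bounds. Combining these with an explicit Vinogradov--Korobov-type zero-free region and an explicit log-free zero-density estimate for the family $\{L(s,\Sym^m f)\}$ (both available in the literature with tracked constants) should yield a bound of the form
\begin{equation*}
|S_m(x)| \le A\,\pi(x)\exp\!\Bigl(-B\,\sqrt{\log x}\,/\sqrt{\log\!\bigl((m+1)(k-1)N\bigr)}\Bigr) + \text{(explicit lower-order terms)},
\end{equation*}
with absolute $A,B>0$. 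A delicate point is that each $\Sym^m f$ is self-dual, so a potential Landau--Siegel zero must be excluded; this is handled by an explicit zero-repulsion argument that prevents two distinct symmetric powers from simultaneously possessing an exceptional zero.

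The third step is optimization. Inserting the bound on $S_m(x)$ into $\sum_{m=1}^{M} |\widehat{F}^{\pm}_M(m)|\,|S_m(x)|$ and balancing against the $\pi(x)/M$ boundary loss by choosing
\begin{equation*}
M \asymp \frac{\sqrt{\log x}}{\log\!\bigl((k-1) N \log x\bigr)},
\end{equation*}
produces an overall error of order $\pi(x)\log((k-1)N\log x)/\sqrt{\log x}$, matching the shape in the statement. The explicit constant $58.1$ then emerges by a careful numerical comparison of three inputs: the Beurling--Selberg coefficient bounds, the explicit constants in the zero-free region, and the explicit constants in the log-free zero-density estimate.

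I expect the main obstacle to be step two: producing explicit, $m$-uniform constants in both the zero-free region and the log-free zero density theorem for $GL_{m+1}$ $L$-functions, and in the accompanying explicit treatment of a possible exceptional zero. A secondary challenge is that every intermediate inequality must be tracked sharply; a naive accumulation of losses would yield a constant many orders of magnitude larger than $58.1$, so the Beurling--Selberg parameters, the Perron truncation height, and the value of $M$ must all be tuned simultaneously.
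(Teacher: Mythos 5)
Your overall plan is the right one and matches the paper's at the macroscopic level: approximate $\mathbbm{1}_I$ by degree-$M$ majorants and minorants built from Chebyshev polynomials of the second kind (the Rouse--Thorner variant of Beurling--Selberg), reduce to bounding Chebyshev-twisted prime sums via Perron's formula applied to $-\frac{L'}{L}(s,\Sym^m f)$ for $1\le m\le M$, and then balance the boundary loss $\asymp \pi(x)/M$ against the contribution of the zeros by choosing $M\asymp \sqrt{\log x}/\log((k-1)N\log x)$. The final passage from the weighted Chebyshev function $\vartheta_{f,I}$ to $\pi_{f,I}$ via partial summation and explicit prime-counting estimates (Dusart, Trudgian) is also as in the paper.

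There is, however, a genuine gap in step two of your proposal, and it is the crux of the problem. You invoke an explicit Vinogradov--Korobov-type zero-free region and an explicit log-free zero-density estimate ``both available in the literature with tracked constants'' for the family $\{L(s,\Sym^m f)\}$ uniformly in $m$. Neither input exists in the form you need. No explicit zero-free region of Vinogradov--Korobov strength is known for $\mathrm{GL}_{m+1}$ $L$-functions with constants uniform in $m$; and while a log-free zero-density estimate for this family does appear in Thorner's work, no explicit version with tracked constants is in the literature. The paper circumvents both obstructions: it proves its own explicit zero-free region of the \emph{classical} de la Vall\'ee Poussin shape, namely $1-\beta \ge \Cr{zfr5}/\bigl((m+7)^2\log(\Cr{zfr3}(k-1)Q(m+7)\sqrt{1+\gamma^2})\bigr)$, via auxiliary products of $L$-functions (the ``$L$-tuples'' $\mathcal{A}_1,\mathcal{A}_2,\mathcal{A}_3$) engineered to have nonnegative Dirichlet coefficients, and this classical zero-free region --- not a Vinogradov--Korobov one --- is what ultimately produces the $\log((k-1)N\log x)/\sqrt{\log x}$ error shape. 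The paper explicitly does \emph{not} use a log-free zero-density estimate: the contribution of high-lying zeros is instead controlled by an explicit Riemann--von Mangoldt-type count $N(T,\Sym^m f)$ combined with the zero-free region, summed by partial summation. Your proposed route would require two absent ingredients, whereas the paper's route is self-contained given Newton--Thorne's automorphy.

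Two further points worth flagging. First, the self-duality / potential Landau--Siegel issue you raise is handled in the paper not by a separate zero-repulsion argument but within the $\gamma=0$ cases of Theorem~\ref{theorem:zfr}, which use different auxiliary $L$-tuples (with $a_1=b_1=0$) to cover real zeros. Second, the paper works with the $\log p$-weighted sums $\Theta_m(u)$ and, crucially, with short-interval averages $\frac{1}{y}\int_x^{x\pm y}\Theta_m(u)\,du$ (taking $y\asymp x/(\sqrt{\log x}\,\log((k-1)Q\log x))$); this smoothing removes the need for a sharp Perron truncation error and is important for getting the numerical constant down to $58.1$. Your proposal as written works with the unsmoothed unweighted sums $S_m(x)$, which would incur a larger constant from the Perron truncation.
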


\begin{theorem}\label{mainEC}
Let $f$ be the newform corresponding to a non-CM elliptic curve over $\mathbb{Q}$ with arbitrary conductor $N$. Then for $x\geq 3$, the same bound as in Theorem \ref{main2} applies.
\end{theorem}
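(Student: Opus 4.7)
The plan is to follow the proof of Theorem \ref{main2} essentially verbatim, modifying only the steps where squarefreeness of the level $N$ is used. For the newform $f$ attached to a non-CM elliptic curve $E/\mathbb{Q}$, the weight is $k = 2$, so $k - 1 = 1$; the only new difficulty is that $N$ may be divisible by squares at primes of additive reduction.

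First, I would isolate where the squarefree hypothesis enters in Theorem \ref{main2}. It is used to bound the analytic conductor of the symmetric power $L$-functions $L(s, \Sym^m f)$ supplied by Newton--Thorne \cite{NT, NT2}: when $N$ is squarefree, each ramified local component $\pi_{f,p}$ is an unramified twist of the Steinberg representation, and hence the local conductor exponent of $\Sym^m \pi_{f,p}$ is exactly $m$, yielding $C(\Sym^m f) \leq (kN)^m$ with explicit constants.

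Next, I would derive an analogous bound for elliptic curves with arbitrary conductor by classifying the local components at primes of bad reduction. At multiplicative-reduction primes the representation is Steinberg and the squarefree analysis applies verbatim. At additive-reduction primes with $p \geq 5$, the local representation is a tamely ramified principal series or supercuspidal with conductor exponent exactly $2$; at $p \in \{2, 3\}$ the wild part of the conductor is bounded by a small explicit constant. Combining these gives an explicit bound of the form $C(\Sym^m f) \leq (kN)^{cm}$ for an explicit $c$, suitable for the subsequent analysis.

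With this in hand, I would substitute the revised conductor estimate into the remainder of the argument used to prove Theorem \ref{main2} --- Beurling--Selberg majorants for $\mu_{\ST}$, explicit log-free zero density estimates and zero-free regions for $L(s, \Sym^m f)$, and the trigonometric-polynomial approximation of $\mathbf{1}_I$ --- and verify that the constant $58.1$ is preserved. The main obstacle will be the purely technical task of tracking the slightly inflated conductor bound at additive primes through the explicit computations and confirming that the extra contribution is absorbed by the logarithmic factor $\log((k-1)N\log x) = \log(N\log x)$. The slack afforded by $k - 1 = 1$, compared with general even $k \geq 2$ in Theorem \ref{main2}, should easily accommodate this.
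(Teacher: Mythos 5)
Your high-level strategy mirrors the paper's: localize the only use of squarefreeness to the arithmetic conductor bound for $L(s,\Sym^m f)$, and then re-run the uniform argument (Theorem \ref{main}). But there is a genuine gap in the conductor step. The crucial point, proved in the paper's Appendix \ref{ECappendix} via the computations of Martin--Watkins \cite{MW}, is the \emph{exact} inequality $q_{\Sym^m f} \leq N^{m+1}$, which is precisely the hypothesis $q_{\Sym^m f} \leq Q^{m+1}$ of Theorem \ref{main} with $Q = N$. Your proposed bound ``$C(\Sym^m f) \leq (kN)^{cm}$ for an explicit $c$'' does not have this form: if $c>1$ one would have to take $Q$ of size roughly $N^c$, and the resulting bound would be $\log(N^c\log x)/\sqrt{\log x}$ rather than $\log(N\log x)/\sqrt{\log x}$, so the constant $58.1$ would \emph{not} be preserved. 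Your remark that the inflation is ``absorbed by the logarithmic factor'' is exactly what fails; the conductor exponent appears additively inside the logarithm and changes the stated bound. There is also no ``slack afforded by $k-1=1$'' to invoke: the constant in Theorem \ref{main} is proved uniformly in $k$ and is not tightened for $k=2$.

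A second, related imprecision: you bound the conductor of the local component $\pi_{f,p}$ (tame exponent $2$ at additive $p\geq 5$, small wild part at $p\in\{2,3\}$), but what is needed is the conductor of $\Sym^m\pi_{f,p}$, whose exponent grows linearly in $m$. The paper's Appendix A records, from \cite{MW}, that the tame exponent of $\Sym^m$ at additive primes is at most $m+1$ and that the wild exponent at $p\in\{2,3\}$ is at most $\max\{0,\tfrac{v_p(N)}{2}-1\}(m+1)$; the key observation making the bound come out to exactly $N^{m+1}$ is that $v_p(N)\geq 2$ at additive primes, so the local contribution $p^{\epsilon_m(p)+\delta_m(p)}$ is always at most $p^{(m+1)v_p(N)}$. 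Without supplying this precise local-to-global computation (or an equivalent), the proposal does not establish the hypothesis of Theorem \ref{main} with $Q=N$ and hence does not yield the stated constant.
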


\begin{remark}
The requirement that $N$ be squarefree in Theorem \ref{main2} is a technical inconvenience that can be avoided if one is willing to accept a weaker bound. This is discussed in Remark \ref{squarefree-remark}.
\end{remark}

\begin{remark}
The true order of magnitude of this error remains an open question. It is expected that a bound of the shape $c_{f,\varepsilon} x^{\frac{1}{2}+\varepsilon}$ is satisfied. Some convincing evidence for this is exhibited in \cite{BP} in the elliptic curve case.
\end{remark}

For $k=2$ and $f$ corresponding to a non-CM elliptic curve over $\mathbb{Q}$, a result of Elkies \cite{E} yields that $a_f(p)=0$ for infinitely many $p$. On the other hand, for $k\geq 4$, we expect that $a_f(p)$ takes on each real value only finitely many times. The Atkin--Serre conjecture \cite{S}, stated below, makes this precise.

\begin{conj}[Atkin--Serre]
 Let $f\in S_{k}^{\text{new}}(\Gamma_0(N))$ be a non-CM cuspidal newform of weight $k\geq 4$. Then for each $\varepsilon>0$, there is a $c_{\varepsilon,f}>0$ such that for sufficiently large $p$, we have 
\begin{align*}
    |a_f(p)| \geq c_{\varepsilon,f}\, p^{\frac{k-3}{2}-{\varepsilon}}.
\end{align*}
\end{conj}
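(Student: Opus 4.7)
The plan is to convert the Atkin--Serre bound into a statement about the angles $\theta_p$, attack it dyadically using the explicit Sato--Tate estimate of Theorem~\ref{main2}, and then sharpen the resulting density bound to a pointwise bound using integrality of the Fourier coefficients. Since $a_f(p) = 2p^{(k-1)/2}\cos\theta_p$, the desired inequality $|a_f(p)|\geq c_{\varepsilon,f}\,p^{(k-3)/2 - \varepsilon}$ is equivalent to $|\cos\theta_p|\geq (c_{\varepsilon,f}/2)\,p^{-1-\varepsilon}$, and since $|\cos\theta|$ has a simple zero at $\pi/2$ and is bounded below elsewhere on $[0,\pi]$, the task reduces to showing that for each $\varepsilon > 0$ only finitely many primes $p$ satisfy $|\theta_p - \pi/2| < p^{-1-\varepsilon}$.

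First I would execute the dyadic step. For each $y = 2^j$, apply Theorem~\ref{main2} at $x = 2y$ with $I_y := [\pi/2 - y^{-1-\varepsilon/2},\, \pi/2 + y^{-1-\varepsilon/2}]$. Since $\sin^2\theta \approx 1$ on $I_y$, one has $\mu_{\ST}(I_y) \sim (4/\pi)\,y^{-1-\varepsilon/2}$, so
\[
\#\{\,p \in (y, 2y] : \theta_p \in I_y\,\} \leq \tfrac{4}{\pi}\,y^{-1-\varepsilon/2}\pi(2y) + 58.1\,\pi(2y)\,\frac{\log((k-1)N\log 2y)}{\sqrt{\log 2y}},
\]
and the first term is $o(1)$ as $y\to\infty$. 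Next I would upgrade this density bound into a pointwise bound by invoking the integrality of the $a_f(n)$: for any prime $p$ with $\theta_p$ near $\pi/2$, the $p$-th coefficient of $L(s, \Sym^m f)$ (which, up to normalization, is $\sin((m+1)\theta_p)/\sin\theta_p$) is an algebraic integer of controlled height. Taking $m = 1,2,\dots,M$ for a suitable $M = M(p)$, these Hecke relations would constrain $\theta_p$ to lie in a finite, explicitly computable discrete subset of $I_y$, and a Liouville-style lower bound on the distance from $\pi/2$ to that subset should then yield the required $|\theta_p - \pi/2| \geq c'_{\varepsilon,f}\,p^{-1-\varepsilon}$. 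Summing dyadically over $j$ completes the argument.

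The main obstacle is that the error term in Theorem~\ref{main2}, of size $\pi(y)/\sqrt{\log y}$, vastly exceeds the $o(1)$ precision one needs to extract information about individual primes, so the dyadic estimate on its own rules out no violating prime in $(y, 2y]$. Closing the gap thus hinges entirely on the Hecke-relation/Liouville step succeeding---in effect, on turning the algebraic structure of the Frobenius eigenvalues $e^{\pm i\theta_p}$ into a genuine Diophantine lower bound, and on controlling the heights of the $p$-th coefficients of all $L(s, \Sym^m f)$ for $m\leq M$ sharply enough that no nontrivial value of $\theta_p$ can sit within distance $p^{-1-\varepsilon}$ of $\pi/2$. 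Absent this, one would have to fall back on a power-saving improvement of Theorem~\ref{main2}, conjecturally available under GRH for every symmetric power $L$-function (cf.~\cite{RT}). The success of the approach therefore depends critically on how rigidly the joint system of Hecke relations pins down $\theta_p$ near $\pi/2$.
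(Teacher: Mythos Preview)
The statement you are attempting to prove is the Atkin--Serre \emph{conjecture}; it is stated in the paper as an open conjecture and is not proven there. What the paper actually establishes (Theorem~\ref{AS}) is a density result: the number of primes $p \in (x,2x]$ with $|a_f(p)| \leq 2p^{(k-1)/2}\frac{\log\log p}{\sqrt{\log p}}$ is at most $179\,\pi(x)\frac{\log((k-1)N\log x)}{\sqrt{\log x}}$. This says only that a density-one subset of primes satisfies an Atkin--Serre-type lower bound, not that all sufficiently large primes do.

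Your proposal correctly identifies the central obstruction: the error term in Theorem~\ref{main2} is of order $\pi(x)/\sqrt{\log x}$, which is far too coarse to rule out even a single prime in $(y,2y]$ landing in $I_y$. The attempted remedy---a ``Hecke-relation/Liouville step'' that would convert integrality of the symmetric-power Hecke eigenvalues into a Diophantine lower bound on $|\theta_p - \pi/2|$---is not a known technique and, as stated, does not work. The numbers $U_m(\cos\theta_p)$ for $m=1,\dots,M$ are algebraic integers in the Hecke field of $f$, but nothing prevents $\cos\theta_p$ from being extremely small (e.g.\ of size $p^{-1-\varepsilon}$) while all these quantities remain integral; indeed, $a_f(p)=0$ is perfectly consistent with integrality, and the conjecture is precisely that this (and nearby values) cannot occur for $k\ge 4$. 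No Liouville-type inequality is available here because $e^{i\theta_p}$ is an algebraic number whose height grows with $p$, so the usual lower bounds on $|e^{i\theta_p}-i|$ degrade exactly at the rate that would be needed. In short, the gap you flag in your last paragraph is genuine and is the entire content of the conjecture; the proposal does not close it.
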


\noindent
Following an argument similar to that of Gafni, Thorner, and Wong \cite{GTW}, we apply Theorem \ref{main2} to obtain the following result, which for $k \geq 4$ implies an upper bound on the number of primes up to $x$ that violate the Atkin--Serre conjecture. 
\begin{theorem}\label{AS}
  Let $f \in S_k^{\text{new}}(\Gamma_0(N))$ be a non-CM cuspidal newform of trivial nebentypus, squarefree level $N$, and even integral weight $k \geq 2$. Then we have
\begin{equation*}
      \frac{\# \left\{  x < p \leq 2x : |a_f (p)| \leq 2p^{(k-1)/2} \frac{\log \log p}{\sqrt{\log p}}\right\}}{\# \{ x<p \leq 2x\}} \leq  179 \cdot \frac{\log ((k-1)N \log{x})}{\sqrt{\log{x}}} . 
\end{equation*}
The same bound applies if $f$ is a newform corresponding to a non-CM elliptic curve over $\Q$ with arbitrary conductor $N$.  
\end{theorem}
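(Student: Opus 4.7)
The plan is to translate the condition $|a_f(p)| \leq 2p^{(k-1)/2}\frac{\log\log p}{\sqrt{\log p}}$ into a statement that $\theta_p$ lies in a small interval near $\pi/2$, and then apply Theorem \ref{main2} on that interval, following the strategy of Gafni--Thorner--Wong \cite{GTW}. Since $a_f(p) = 2p^{(k-1)/2}\cos\theta_p$, the condition is equivalent to $|\cos\theta_p| \leq \delta_p$ where $\delta_p := \frac{\log\log p}{\sqrt{\log p}}$. A short calculus check shows that $y \mapsto \delta_y$ is decreasing for $y > e^{e^2}$, so for $x \geq e^{e^2}$ and $p \in (x, 2x]$ we have $\delta_p \leq \delta_x$. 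Using $\arccos(\pm t) = \pi/2 \mp \arcsin t$, the condition $|\cos\theta_p| \leq \delta_x$ becomes $\theta_p \in I_x := [\pi/2 - \arcsin\delta_x,\, \pi/2 + \arcsin\delta_x]$, so the set in the theorem's numerator is contained in $\{p \in (x, 2x] : \theta_p \in I_x\}$.

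Next, I apply Theorem \ref{main2} at $2x$ and at $x$ with $I = I_x$. Writing $E(y) := \frac{\log((k-1)N\log y)}{\sqrt{\log y}}$ and subtracting the upper bound at $2x$ from the lower bound at $x$, I obtain
\begin{align*}
\#\{x < p \leq 2x : \theta_p \in I_x\} = \pi_{f, I_x}(2x) - \pi_{f, I_x}(x) &\leq \mu_{\ST}(I_x)\bigl(\pi(2x) - \pi(x)\bigr) \\
&\quad + 58.1\bigl(\pi(2x) E(2x) + \pi(x) E(x)\bigr).
\end{align*}
A direct integration gives $\mu_{\ST}(I_x) = \frac{2}{\pi}\bigl(\arcsin\delta_x + \delta_x\sqrt{1-\delta_x^2}\bigr)$; convexity of $\arcsin$ on $[0,1]$ yields $\arcsin\delta_x \leq (\pi/2)\delta_x$, whence $\mu_{\ST}(I_x) \leq 2\delta_x \leq 2 E(x)$ (using $(k-1)N \geq 1$ so that $\log\log x \leq \log((k-1)N\log x)$).

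Finally, I divide through by $\pi(2x) - \pi(x)$ and apply explicit Rosser--Schoenfeld or Dusart bounds for $\pi$ to control both $\pi(2x)/(\pi(2x)-\pi(x))$ and $\pi(x)/(\pi(2x)-\pi(x))$ by absolute constants for $x$ above an effective threshold; combined with $E(2x) \leq E(x)$, this yields an upper bound of the form $C \cdot E(x)$ with $C \leq 179$. For $x$ below the threshold (in particular for $x < e^{e^2}$) the claimed inequality is trivial, since the left-hand side is at most $1$ while routine estimation shows the right-hand side exceeds $1$ for all admissible $(k,N)$. The main obstacle I anticipate lies in this last step: asymptotically the ratio $(\pi(2x)+\pi(x))/(\pi(2x)-\pi(x))$ tends to $3$, contributing $\approx 58.1 \cdot 3 \approx 174$ to $C$, so keeping the remaining contributions (the $\mu_{\ST}(I_x)$ term and the slack in the effective prime-counting inequalities) within the residual budget of roughly $5$ requires careful choice of the threshold and the auxiliary estimates.
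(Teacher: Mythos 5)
Your proposal follows the paper's route (Lemma \ref{ASfirstlemma} plus the final reduction): place $\theta_p$ in a short interval about $\pi/2$, apply Theorem \ref{main2}/\ref{mainEC} at $x$ and at $2x$, and bound $(\pi(2x)+\pi(x))/(\pi(2x)-\pi(x))\leq 3.015$ using Dusart's explicit prime-counting estimates. Two small points of divergence are in your favor. You correctly take the half-width of the interval to be $\arcsin\delta_x$; the paper writes $I=[\pi/2-\ell(x),\,\pi/2+\ell(x)]$ and claims $|\cos\theta_p|\leq\ell(x)\Rightarrow\theta_p\in I$, which is backwards (the correct interval has the larger half-width $\arcsin\ell(x)$), though its bound $\mu_{\ST}\leq\frac{4}{\pi}\ell(x)$ survives because $\arcsin t+t\sqrt{1-t^2}\leq 2t$ for $t\in[0,1]$. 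And your triangle-inequality decomposition keeps the main term $\mu_{\ST}(I_x)(\pi(2x)-\pi(x))$ unmultiplied by the $3.015$ factor, unlike the paper's Lemma \ref{ASfirstlemma}; this makes the budget you worry about at the end close with room to spare, since $2+58.1\cdot 3.015\approx 177.2<179$, whereas the paper's arithmetic $3.015\,(58.1+\frac{4}{\pi})$ lands essentially on $179$.
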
 \noindent The work of Gafni et al.\@ implies that a density one subset of primes satisfies the Atkin--Serre conjecture for arbitrary $N$; Theorem \ref{AS} provides an explicit bound on the number of exceptional primes when $N$ is squarefree.

In our proof of Theorem \ref{main2}, we derive an analogue (Theorem \ref{main}) of the prime number theorem by proving explicit results on the horizontal and vertical distributions of nontrivial zeros of symmetric power $L$-functions. Explicit estimates for the ordinary prime counting function due to Dusart \cite{Du} and Trudgian \cite{Tr2} will then allow us to conclude Theorem \ref{main2} via partial summation.

While our work closely follows that of Thorner \cite{T}, who first established the shape of the asymptotic in our error term in the unconditional case, we maintain higher precision in our calculations to obtain an explicit result.\footnote{For our purposes, we do not require an explicit form of the log-free zero density estimate in \cite{T}.} 
The nature of our calculations is most similar in spirit to the work of Rouse and Thorner \cite{RT}, although working in the unconditional case (i.e., without GRH) requires us to compute an explicit zero-free region and complicates our treatment of the vertical distribution of zeros of $L(s, \Sym^m f)$. Moreover, we employ short-interval smoothing to improve our numerical result, as seen in our choice to integrate over the functions $\Theta_m(x)$ in Lemma \ref{erdos-turan}.

The structure of this paper is as follows. In Section \ref{section-symmetric-powers}, we review the necessary background on symmetric power $L$-functions. In Section \ref{section-main-theorem}, we prove Theorems \ref{main2} and \ref{mainEC} using a series of lemmas that will be established in Sections \ref{horizontalsection}-\ref{Section:PerronIntegral}. In Section \ref{horizontalsection}, we give an explicit zero-free region for $L(s, \Sym^m f)$. In Section \ref{verticalsection}, we give an upper bound for the number of nontrivial zeros of $L(s, \Sym^m f)$ up to height $T$. In Section \ref{Section:PerronIntegral}, we apply the results of Sections \ref{horizontalsection} and \ref{verticalsection} to prove the bounds used in Section \ref{section-main-theorem}. Finally, in Section \ref{section-atkin-serre}, we apply our main results to prove Theorem \ref{AS}.

\section*{Funding}

 This work was supported by the National Science Foundation [DMS 2002265, DMS 205118]; National Security Agency [H98230-21-1-0059]; the Thomas Jefferson Fund at the University of Virginia; and the Templeton World Charity Foundation.

\section*{Acknowledgements}

The authors would like to thank Jesse Thorner for advising this project and for many helpful conversations and suggestions, as well as Ken Ono for his valuable comments. We are grateful to Chantal David, Timothy Trudgian, Larry Washington, and Mark Watkins for several useful conversations, especially regarding bounding the conductors of symmetric power $L$-functions. We also thank the anonymous referee who reviewed our paper for helpful feedback and corrections. This project was completed as a part of the 2021 Number Theory REU at the University of Virginia. The authors used Wolfram Mathematica to simplify many computations.

\section{Background on Symmetric Power \texorpdfstring{$L$}{L}-Functions} \label{section-symmetric-powers}

For the duration of this section, let $f \in S_k^{\text{new}} (\Gamma_0(N))$ be a non-CM holomorphic cuspidal newform with level $N$, even integral weight $k\geq 2$, and trivial nebentypus. For each $m \geq 0$, recall that there exists the \textit{$m$-th symmetric power $L$-function associated to $f$}, denoted by
\begin{equation} \label{eq:sym_power_L_Euler_prod}
    L(s, \Sym^m f)= \prod_{p} \prod_{j = 0}^m \left( 1 - \frac{\alpha_{j, \Sym^m f}(p)}{p^s} \right)^{-1} \!\!\!\! := \sum_{n=1}^\infty \frac{a_{\Sym^m f}(n)}{n^s} \qquad \text{for} \quad \Re(s) > 1
\end{equation} where the $\alpha_{j,\Sym^m f}(p)$ are particular complex numbers which for $p \nmid N$ satisfy $\alpha_{j,\Sym^m f}(p) = e^{i(m-2j)\theta_p}$.  It follows that for $p \nmid N$, we can write $a_{\Sym^m f}(p) = U_m (\cos\theta_p)$, where $U_m(x)$ is the $m$-th Chebyshev polynomial of the second kind. For $p\mid N$, the values of the Satake parameters $\alpha_{j, \Sym^m f}(p)$ can be determined using \cite[Appendix]{ST}. Simple, explicit descriptions of $\alpha_{j, \Sym^m f}(p)$ which are uniform in $f$ are available when $N$ is square-free \cite{RT} as well as when $f$ corresponds to a non-CM elliptic curve via modularity \cite[Appendix]{David}, but these will not be used in our proofs. We note that from the definition of $L(s, \Sym^m f)$, it follows that $L(s, \Sym^0 f) = \zeta(s)$ and $L(s, \Sym^1 f)=L(s, f)$, the $L$-function of the newform $f$. For notational convenience, it is also occasionally useful to define $L(s, \Sym^{-1} f) := 1$.
 
Recent results due to Newton and Thorne (\cite[Theorem B]{NT} and \cite[Theorem A]{NT2}) imply that $\Sym^m f$ corresponds to a cuspidal automorphic representation of $\mathrm{GL}_{m+1}(\mathbb{A}_{\mathbb{Q}})$ for all $m \geq 1$. Indeed, $\Sym^m f$ is a unitary cuspidal automorphic representation that is regular, algebraic, and self-dual and hence can be realized in the cohomology of certain Shimura varieties as per the results of Harris and Taylor \cite{HT}. In light of this result, each Satake parameter $\alpha_{j, \Sym^m f}(p)$ is bounded in modulus by $1$, as is remarked in \cite{CJ}. 

In \cite[Theorem 6.1]{T}, Thorner applies Newton and Thorne's results, together with previous conditional results due to Moreno and Shahidi \cite{MS} and Cogdell and Michel \cite[Section 3]{CM}, to deduce several important analytic properties of $L(s, \Sym^m f)$ for $m \geq 1$, which may be stated as follows:
\begin{itemize}
    \item[(i)] The gamma factor\footnote{The expression we give corrects for an extra factor of the arithmetic conductor of $L(s, \Sym^m f)$ in the gamma factor stated in \cite[6.1]{T}.} of $L(s, \Sym^m f)$ corresponding to the infinite place of $\mathbb{Q}$ is given by
    \begin{equation}\label{gammasconj11}
            \gamma(s, \Sym^m f) = \begin{cases} \displaystyle \prod_{j=1}^{\frac{m+1}{2}} \Gamma_{\mathbb{C}} (s + (j-{\textstyle\frac{1}{2})(k-1)}) & \text{if $m$ is odd},\\
            \displaystyle \Gamma_{\mathbb{R}} (s + r) \prod_{j=1}^{\frac{m}{2}} \Gamma_{\mathbb{C}} (s + j(k-1)) & \text{if $m$ is even}, \end{cases}
    \end{equation}
    where $\Gamma_{\mathbb{R}}(s) := \pi^{-\frac{s}{2}} \Gamma\left(\frac{s}{2}\right),$ $\Gamma_{\mathbb{C}}(s) := \Gamma_\R(s)\Gamma_\R(s+1) = 2(2\pi)^{-s}\Gamma(s)$, and $r \in \{0, 1\}$ is chosen so that $r \equiv \frac{m}{2} \pmod{2}$.  
    \item[(ii)] The completed $L$-function \begin{align*}
        \Lambda(s, \Sym^m f) := q_{\Sym^m f}^{\frac{s}{2}} \gamma(s, \Sym^m f) L(s, \Sym^m f)
    \end{align*}
    is entire of order 1. 
    \item[(iii)] There exists an $\epsilon_{\Sym^m f} \in \{1, -1\}$ such that $\Lambda(s, \Sym^m f)$ satisfies the functional equation
        \begin{equation} \label{eq:functional_equation}
            \Lambda(s, \Sym^m f) = \epsilon_{\Sym^m f} \Lambda(1-s, \Sym^m f).
        \end{equation}
\end{itemize}

\noindent
We note that $L(s,\Sym^m f)$ has infinitely many \textit{trivial} zeros on the nonnegative real axis corresponding precisely to the poles of $\gamma(s,\Sym^m f)$, and infinitely many \textit{nontrivial} zeros $\rho$ on the critical strip $0< \Re(\rho) < 1$; $L(s,\Sym^m f)$ is nonzero elsewhere.

Let $q_{\Sym^m f}$ denote the arithmetic conductor of $L(s, \Sym^m f)$. When $N$ is squarefree, we have $q_{\Sym^m f} = N^m$ \cite[Section 3]{CM}. When $f$ corresponds to a non-CM elliptic curve of arbitrary level, we have $q_{\Sym^m f} \leq N^{m+1}$. This is proven in Appendix \ref{ECappendix}. 

\begin{remark} \label{squarefree-remark}
In Theorem \ref{main2}, we require that $N$ is squarefree because no explicit upper bound of the form $q_{\Sym^m f} = N^{O(m)}$ is known in general. For a fixed newform $f$, it may be shown that $q_{\Sym^m f} = N^{O_f(m)}$, but this bound is not explicit \cite[Section 5]{R}. The weaker inequality $q_{\Sym^m f} \leq N^{\frac{3}{2}m^3}$ can be extracted from \cite[Lemma 2.1]{R}; however, this would alter the shape of the bound in our results.
\end{remark}

For $n \geq 1$, we define the function $\Lambda_{\Sym^m f}(n)$ by \begin{equation*}
    \sum_{n=1}^\infty \frac{\Lambda_{\Sym^m f} (n)}{n^s} := -\frac{L'}{L}(s, \Sym^m f), \qquad \Re(s) > 1.
\end{equation*}
From \eqref{eq:sym_power_L_Euler_prod}, we easily see that the values $\Lambda_{\Sym^m f}(n)$ are given explicitly by
\begin{equation}\label{eq:lambda_coeffs_def}
    \Lambda_{\Sym^m f}({n}) = \begin{cases} U_m(\cos (\ell \theta_p)) \log p & \text{if ${n} = p^{{\ell}}$ for some $p \nmid N$ and ${\ell} \geq 1$,} \\
    \sum_{j=0}^m \alpha_{j, \Sym^m f}(p)^{\ell} \, \log p & \text{if ${n} = p^{{\ell}}$ for some $p \mid N$ and $\ell \geq 1$,} \\
    0 & \text{otherwise}. \end{cases}
\end{equation}
In particular, this demonstrates that $    \lvert\Lambda_{\Sym^m f}(n)\rvert \leq (m+1) \Lambda(n)$, so that for $s = \sigma + it$ with $\sigma > 1$ and for all $m \geq 1$, we have
\begin{equation}\label{RTlemma}
             \left| \frac{L'}{L}(s, \Sym^m f) \right| \leq -(m+1) \frac{\zeta'}{\zeta}(\sigma).
\end{equation}
Additionally, it is well-known that $-\frac{L'}{L}(s, \Sym^m f)$ has real Dirichlet coefficients for any $m \geq 0$. In particular, this implies that the zeros of $L(s,\Sym^m f)$ come in complex conjugate pairs.

Since $\Lambda(s, \Sym^m f)$ is entire of order $1$ and is nonzero at $s = 0$, it admits a Hadamard product \begin{align*}
    \Lambda(s, \Sym^m f) = e^{A_{\Sym^m f} + B_{\Sym^m f}s}\prod_\rho \left(1-\frac{s}{\rho}\right)e^{-\frac{s}{\rho}}.
\end{align*} Combining this with the definition of $\Lambda(s, \Sym^m f)$, we obtain the formula \begin{align}
    \frac{\Lambda'}{\Lambda}(s, \Sym^m f) &= \frac{L'}{L}(s, \Sym^m f) + \frac{\gamma'}{\gamma}(s, \Sym^m f) + \frac{1}{2}\log q_{\Sym^m f} \nonumber \\
    &= B_{\Sym^m f} - \sum_\rho \left( \frac{1}{s-\rho} - \frac{1}{\rho} \right). \label{eq:log_deriv_hadamard_prod}
\end{align} It will be useful to note that by \cite[Proposition 5.7]{IK}, we have 
\begin{equation} \label{eq:B}
    \Re(B_{\Sym^m f}) = -\sum_{\rho} \Re\left( \frac{1}{\rho} \right).
\end{equation}

\section{Proof of Main Theorem} \label{section-main-theorem}

We first define the following analogue of the first Chebyshev function:
\[\vartheta_{f, I} (x) := \sum_{\substack{p \leq x\\ \theta_p \in I\\ p \nmid N}} \log p.\] 
We will deduce Theorems \ref{main2} and \ref{mainEC} from the following theorem.

 \begin{theorem}\label{main} 
Let $f$ be a non-CM holomorphic cuspidal newform of even integral weight $k\geq 2$, level $N,$ and trivial nebentypus such that $q_{\Sym^m f} \leq Q^{m+1}$ for all $m \geq 1$ for some $Q\geq 1$. Then for $\Cr{c_{99}}:= 58.084$, we have
\begin{align*}\left| \vartheta_{f,I}(x) - \mu_{\mathrm{ST}}(I) x \right| \leq \Cr{c_{99}} x \frac{\log((k-1)Q \log x)}{\sqrt{\log x}}\end{align*}
for any $x \geq 3$.
\end{theorem}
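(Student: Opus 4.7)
The plan is to reduce the counting problem for $\vartheta_{f,I}(x)$ to a collection of explicit estimates for the prime sums
\[
\psi_{\Sym^m f}(x) := \sum_{n \leq x} \Lambda_{\Sym^m f}(n) = \sum_{\substack{p \leq x \\ p \nmid N}} U_m(\cos\theta_p)\log p + (\text{controlled error from }p\mid N\text{ and prime powers}),
\]
for each $m \geq 1$. The first step is to invoke the smoothed Erdős--Turán inequality (Lemma \ref{erdos-turan}) with respect to the Sato--Tate measure, which will express $\mathbf{1}_I(\theta_p)$ as a truncated linear combination $\sum_{m=0}^{M} c_m(I)\, U_m(\cos\theta_p)$ plus a tail bounded in terms of $M$ and the smoothing parameter. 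Since $U_0=1$, the $m=0$ piece produces $\mu_{\mathrm{ST}}(I)\,\vartheta(x)$ up to negligible terms, and the prime number theorem (in Dusart/Trudgian explicit form) converts $\vartheta(x)$ into $x$ with an acceptable error. The remaining $m\geq 1$ contributions must be bounded by the appropriate $\psi_{\Sym^m f}$-type sums.

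The core analytic input is an explicit estimate of the form $\psi_{\Sym^m f}(x) \ll x\,e^{-c\sqrt{\log x}/\log((k-1)Qm \log x)^{1/2}}$ (or similar), which is to be produced by the smoothed Perron formula of Section \ref{Section:PerronIntegral}. The idea is standard: write $\psi_{\Sym^m f}(x)$ as a contour integral of $-\frac{L'}{L}(s,\Sym^m f)x^s/s$, shift the contour into the critical strip as far as the zero-free region of Section \ref{horizontalsection} allows, and bound the resulting sum over nontrivial zeros $\rho=\beta+i\gamma$ using the vertical distribution bound of Section \ref{verticalsection}. The short-interval smoothing, achieved by integrating the cutoff against the functions $\Theta_m(x)$, keeps the residual contributions from zeros of large imaginary part under control without needing a fully log-free zero-density estimate. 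One also uses \eqref{RTlemma} on the vertical line just past $\Re(s)=1$ to begin the contour shift.

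Next, one assembles the estimates. Writing
\[
\vartheta_{f,I}(x) - \mu_{\mathrm{ST}}(I)\,\vartheta(x) = \sum_{m=1}^{M} c_m(I)\,\psi_{\Sym^m f}(x) + (\text{Erdős--Turán tail}) + (\text{prime-power and }p\mid N\text{ defects}),
\]
each individual $|\psi_{\Sym^m f}(x)|$ is bounded by $x$ times a factor decaying with $\log x$ but mildly growing with $m$ (through $\log q_{\Sym^m f} \leq (m+1)\log Q$ and the archimedean contribution $\log((k-1)m)$). The Erdős--Turán coefficients satisfy $|c_m(I)| \ll 1/m$, so after summing $m=1,\ldots,M$ one obtains an error of size $\log M$ times the single-$m$ bound. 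Choosing $M$ as a power of $\log x$ balances the Erdős--Turán tail against the analytic error and yields the $\log((k-1)Q\log x)/\sqrt{\log x}$ shape. Finally one passes from $\vartheta(x)$ to $x$ using the explicit PNT, and absorbs the contribution of ramified primes and higher prime powers (bounded trivially by $O(\sqrt{x}\log x)$) into the main error term for $x\geq 3$.

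The main obstacle is purely quantitative: every step above is classical in spirit, but to land on the explicit constant $\Cr{c_{99}}=58.084$ one must (a) track explicit constants in the Vinogradov--Korobov-free zero-free region for $L(s,\Sym^m f)$, whose constants depend delicately on the analytic conductor via the gamma factor \eqref{gammasconj11} and the bound $q_{\Sym^m f}\leq Q^{m+1}$, and (b) optimize the smoothing width $\Theta_m$, the Erdős--Turán cutoff $M$, and the height of the contour $T$ jointly. The linear-in-$m$ growth of $\log q_{\Sym^m f}$ from the hypothesis $q_{\Sym^m f}\leq Q^{m+1}$ is exactly what keeps the $m$-sum convergent after optimization, and any weaker bound (as mentioned in Remark \ref{squarefree-remark}) would degrade the final shape. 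The explicit bookkeeping across Sections \ref{horizontalsection}--\ref{Section:PerronIntegral} is therefore where the real work lies.
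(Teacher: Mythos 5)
Your outline follows the paper's own proof closely: smoothed Erd\H{o}s--Tur\'an reduction to Chebyshev sums, smoothed Perron formula with contour shift to the explicit zero-free region, vertical zero-counting, and joint optimization of $M$, $T$, and the smoothing width, with explicit PNT estimates converting $\vartheta(x)$ to $x$ at the end. One structural step is missing, however, and it is not merely bookkeeping: the analytic argument requires $M \geq 1$ in Lemma \ref{erdos-turan}, and with the paper's choice $M = \lfloor \mathcal{M}_f(x) - 7 \rfloor$ where $\mathcal{M}_f(x) = \tfrac{\sqrt{\Cr{zfr5}\log x}}{2\log((k-1)Q\log x)}$, this forces $x$ to be very large (beyond $10^{100}$). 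Your proposal as written would therefore not establish the bound for the full range $x \geq 3$. The paper handles small $x$ separately via Lemma \ref{triviality-bound} (the trivial bound $|\vartheta_{f,I}(x) - \mu_{\ST}(I)x| \leq (1 + \tfrac{1}{36260})x$ from Dusart), and the constant $\Cr{c_{99}} = 58.084$ is determined precisely by equating the trivial bound with the analytic bound at the crossover point and solving for the minimal admissible cutoff $M_0$. Without this step your optimization over $M$ is unconstrained and cannot produce the claimed explicit constant over the claimed range.
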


\begin{remark}
    If one is willing to sacrifice the effective range of this theorem, one can obtain constants smaller than $\Cr{c_{99}}$. More precisely, for any constant $\Cr{c_{99}}' > \frac{4\sqrt{2}}{2-\sqrt{3}} \approx 21.112$, our methods show that there exists an effectively computable constant $d > 0$, depending only on $\Cr{c_{99}}'$, such that Theorem \ref{main} applies for $x$, $k$, and $Q$ satisfying $\sqrt{\log{x}} \geq d\log((k-1)Q\log{x})$, replacing $\Cr{c_{99}}$ with $\Cr{c_{99}}'$.
\end{remark}

\noindent
We establish Theorem \ref{main} separately for small $x$ and for large $x$. The former case is treated by the following lemma. 

\begin{lemma} \label{triviality-bound}
Assume the hypotheses of Theorem \ref{main}. Then for $x \geq 0$, we have \begin{align*}
    \left|\vartheta_{f,I}(x) - \mu_{\ST}(I)x\right| \leq {\textstyle\left(1 + \frac{1}{36260}\right)}x.
\end{align*}
\end{lemma}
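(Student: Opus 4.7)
The plan is to reduce the lemma to a standard explicit upper bound on the classical Chebyshev theta function $\vartheta(x) = \sum_{p \leq x} \log p$. First, I would observe that both expressions inside the absolute value are nonnegative: $\vartheta_{f,I}(x) \geq 0$ trivially, and $\mu_{\ST}(I)x \geq 0$ because $\mu_{\ST}$ is a probability measure. For any pair of nonnegative reals $A, B$, we have the elementary bound $|A - B| \leq \max(A,B)$, so
\[
    \bigl|\vartheta_{f,I}(x) - \mu_{\ST}(I)x\bigr| \;\leq\; \max\bigl(\vartheta_{f,I}(x),\; \mu_{\ST}(I)x\bigr).
\]

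Next, I would control the two candidates for this maximum separately. Since $\mu_{\ST}$ is a probability measure on $[0,\pi]$ and $I \subseteq [0,\pi]$, we have $\mu_{\ST}(I) \leq 1$, so $\mu_{\ST}(I)x \leq x$. On the other hand, $\vartheta_{f,I}(x)$ is a sum of $\log p$ over the subset of primes $p \leq x$ with $\theta_p \in I$ and $p \nmid N$, so $\vartheta_{f,I}(x) \leq \vartheta(x)$. The lemma therefore reduces to the assertion
\[
    \vartheta(x) \;\leq\; \bigl(1 + \tfrac{1}{36260}\bigr)\, x \qquad \text{for all } x \geq 0.
\]

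To finish, I would cite an explicit bound on the Chebyshev theta function of exactly this shape, of the sort provided by Dusart-type estimates (refining the classical Rosser–Schoenfeld inequality $\vartheta(x) < 1.01624\, x$). Such bounds are established by combining an analytic estimate valid for $x \geq x_0$ with direct numerical verification on the remaining range $[0,x_0]$; the specific numerical value $1/36260$ is precisely what is tabulated in the explicit prime-counting literature. Substituting this bound back yields the claim.

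There is no substantive obstacle: the lemma is purely a triviality bound, intended only to cover the small-$x$ regime in the proof of Theorem \ref{main}. The only real "work" is locating a Chebyshev bound in the literature with numerical constant tight enough to match $1 + 1/36260$; everything else is one line of the triangle inequality together with the fact that $\mu_{\ST}$ is a probability measure.
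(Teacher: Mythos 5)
Your proposal is correct and matches the paper's proof in all essentials: the paper performs the same two-case analysis (which is exactly your observation that $|A-B| \le \max(A,B)$ for nonnegative $A,B$), bounding $\mu_{\ST}(I)x \le x$ trivially and $\vartheta_{f,I}(x) \le \vartheta(x) \le (1 + \tfrac{1}{36260})x$ via the cited Dusart estimate. No gaps.
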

 \begin{proof}
In the case that $\mu_{\mathrm{ST}}(I)x > \vartheta_{f,I}(x)$, the statement  holds trivially. Otherwise, applying the bound $\vartheta(x) - x \leq \frac{1}{36260} x$ for $x>0$ from \cite{Du} and the observation that $\vartheta_{f,I}(x) \leq \vartheta(x)$ for all $x$, we arrive at the desired result.
 \end{proof}
This lemma implies Theorem \ref{main} for $x \leq 10^{100}$, as our claimed bound exceeds $(1+\frac{1}{36260})x$ for $x$ in this range. To bound $\vartheta_{f,I}(x)$ for large $x$, we will use the following approximation for the indicator function $\chi_I(\theta)$ of the interval $I$.

\begin{lemma}[\cite{RT}, Lemma 3.1]\label{lemma:trig_polys}
    For $I = [a, b] \subseteq [0, \pi]$ and $M$ a positive integer, there exist trigonometric polynomials
        \[
            F_{I,M}^\pm (\theta) = \sum_{m=0}^M \widehat{F}_{I, M}^\pm(m) U_m (\cos \theta)
        \]
    that satisfy the following properties:
        \begin{enumerate}
            \item For all $0 \leq \theta \leq \pi,$ we have $F_{I,M}^- (\theta) \leq \chi_I(\theta) \leq F_{I,M}^+(\theta)$.
            \item The constant term $\widehat{F}_{I,M}^{\pm}(0)$ satisfies $|\widehat{F}_{I,M}^{\pm}(0) - \mu_{\ST}(I)| \ \leq \frac{4}{M+1}.$
            \item For all $1 \leq m \leq M,$ the values $\widehat{F}_{I,M}^\pm(m)$ satisfy $|\widehat{F}_{I,M}^\pm(m)|\ \leq 4\left( \frac{1}{M+1} + \min\left\{ \frac{b-a}{2\pi}, \frac{1}{\pi m} \right\} \right)$.
        \end{enumerate}
\end{lemma}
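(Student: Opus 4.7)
My plan is to adapt the Beurling--Selberg / Vaaler extremal majorant construction, classically used to approximate indicator functions by band-limited functions, to the Sato--Tate setting. The bridge between the Chebyshev-$U$ basis and standard trigonometric polynomials is the identity $\sin\theta\, U_m(\cos\theta) = \sin((m+1)\theta)$, together with $\cos(n\theta) = T_n(\cos\theta) = \tfrac{1}{2}(U_n - U_{n-2})(\cos\theta)$ for $n \geq 2$. Through these identities, polynomials in $\{U_m(\cos\theta)\}_{m \leq M}$ correspond (up to a small degree shift) to even trigonometric polynomials in $\theta$ of comparable degree, so the problem reduces to a classical Fourier-analytic extremal problem on the circle.

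First I extend $\chi_I$ to the even $2\pi$-periodic function $\widetilde\chi_I$ supported on $[a,b] \cup [-b,-a]$. Its Fourier coefficients are
\begin{equation*}
    \widehat{\widetilde\chi_I}(n) \;=\; \frac{\sin(nb) - \sin(na)}{\pi n}, \qquad |\widehat{\widetilde\chi_I}(n)| \leq \min\bigl\{(b-a)/\pi,\; 2/(\pi n)\bigr\}.
\end{equation*}
Vaaler's construction then yields even trigonometric polynomials $V^\pm_L(\theta) = \sum_{n=0}^{L} d_n^\pm \cos(n\theta)$ of degree $L = M+1$ such that $V^-_L \leq \widetilde\chi_I \leq V^+_L$ pointwise on the circle, with $|d_0^\pm - (b-a)/\pi|$ and $|d_n^\pm - 2\widehat{\widetilde\chi_I}(n)|$ each of size $O(1/L)$. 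Converting from the $T_n$-expansion to the $U_m$-expansion via the identity above, each coefficient $\widehat{F}^\pm(m)$ in $F^\pm_{I,M} = \sum_m \widehat{F}^\pm(m) U_m(\cos\theta)$ becomes a signed combination of $d_m^\pm$ and $d_{m+2}^\pm$. Property (3) then follows, with the factor of $4$ absorbing both the symmetrization to even functions and the basis change. For the constant term, $\widehat{F}^\pm(0) = \int_0^\pi F^\pm\, d\mu_{\ST}$, so
\begin{equation*}
    \bigl|\widehat{F}^\pm(0) - \mu_{\ST}(I)\bigr| \;\leq\; \int_0^\pi |F^\pm - \chi_I|\, d\mu_{\ST},
\end{equation*}
which is controlled by the standard $L^1$-defect $\int_{-\pi}^{\pi}(V^+_L - V^-_L)\, d\theta = O(1/L)$ of Vaaler's construction, yielding property (2). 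Property (1) is built into the construction from the start.

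The main obstacle is the bookkeeping of numerical constants. Vaaler's asymmetric majorants must be symmetrized across $\theta \mapsto -\theta$ without losing sharpness, and the basis change $T_n \to U_m$ produces a two-term combination of Fourier coefficients per Chebyshev coefficient. Both steps must be carried out carefully to obtain the explicit constant $4$ appearing in (2) and (3), particularly ensuring that the bounds on $|\widehat{F}^\pm(m)|$ neither degenerate at the endpoints of the expansion $m = 0$ and $m = M$ nor lose the $\min\{(b-a)/(2\pi), 1/(\pi m)\}$ shape of the dependence on $I$.
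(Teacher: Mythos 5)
This lemma is not proved in the paper; it is cited directly as Lemma 3.1 of Rouse--Thorner \cite{RT}, and your sketch follows essentially the same strategy used there: symmetrize $\chi_I$ to the even function $\widetilde\chi_I$, apply the Beurling--Selberg/Vaaler extremal construction to obtain cosine-polynomial majorants and minorants, and convert to the Chebyshev-$U$ basis via $\cos(n\theta) = \tfrac12\bigl(U_n - U_{n-2}\bigr)(\cos\theta)$. Your observation that $\widehat{F}^\pm(0) = d_0^\pm - \tfrac12 d_2^\pm = \tfrac{2}{\pi}\int_0^\pi V^\pm\sin^2\theta\,d\theta = \int_0^\pi V^\pm\,d\mu_{\mathrm{ST}}$ is the right way to see where the $\mu_{\mathrm{ST}}$-mass enters property (2), and the pointwise inequality $d\mu_{\mathrm{ST}} \leq \tfrac{2}{\pi}\,d\theta$ converts the Vaaler $L^1(d\theta)$-defect into the stated $4/(M+1)$ bound exactly as you indicate; likewise the two-term combination $\widehat{F}^\pm(m) = \tfrac12\bigl(d_m^\pm - d_{m+2}^\pm\bigr)$ together with the factor $2$ from symmetrizing to two intervals accounts for the factor $4$ in property (3). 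One small slip worth correcting: the basis change $\cos(n\theta) \mapsto U_n$ is degree-preserving, so the Vaaler polynomials $V^\pm$ must be taken of degree $L = M$, not $L = M+1$; with $L = M+1$ the resulting $F^\pm_{I,M}$ would carry an extraneous $U_{M+1}$ term, which the lemma does not permit.
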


We define the functions $\Theta_m(x)$ as 
\begin{equation*}
    \Theta_m(x) := \sum_{\substack{p \leq x \\ p \nmid N    }} U_m(\cos \theta_p) \log p.
\end{equation*}
To approximate $\vartheta_{f,I}(x),$ we will use the following variant of the Erd\H{o}s--Tur\'an inequality from \cite{RT}, which follows from Lemma \ref{lemma:trig_polys}:

\begin{lemma}\label{erdos-turan} 
If $M \geq 1$ is an integer and $1 < x-y < x$, then we have
\begin{align*}
    \bigg| \vartheta_{f,I}(x) - \mu_{\mathrm{ST}}(I) x \bigg| & \leq  \frac{4}{M+1} x + \left(\frac{1}{2} + \frac{2}{M+1}\right)y \nonumber \\
    &\quad + \max_{\pm} \Bigg\{\left(1 + \frac{4}{M+1}\right) \left| \pm \frac{1}{y}\int_{x}^{x\pm y} (\Theta_0(u)-u)du \right|\nonumber \\
    & \ \ \ \ \ + 4 \sum_{1 \leq m \leq M} \left(\frac{1}{M+1} + \frac{1}{\pi m}\right) \left| \pm \frac{1}{y} \int_x^{x\pm y} \Theta_m(u) du \right| \Bigg\}.
\end{align*}
\end{lemma}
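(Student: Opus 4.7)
The plan is to combine Lemma \ref{lemma:trig_polys} with a short-interval smoothing argument that exploits the monotonicity of $\vartheta_{f,I}$. The key observation is that $u \mapsto \vartheta_{f,I}(u)$ is a non-decreasing step function, so for $y > 0$ one immediately has
\[\frac{1}{y}\int_{x-y}^x \vartheta_{f,I}(u)\,du \;\leq\; \vartheta_{f,I}(x) \;\leq\; \frac{1}{y}\int_x^{x+y}\vartheta_{f,I}(u)\,du.\]
Next, the pointwise sandwich $F_{I,M}^-(\theta) \leq \chi_I(\theta) \leq F_{I,M}^+(\theta)$ from Lemma \ref{lemma:trig_polys}, multiplied by $\log p \geq 0$ and summed over $p \leq u$ with $p \nmid N$, gives
\[\sum_{m=0}^M \widehat{F}_{I,M}^-(m)\Theta_m(u) \;\leq\; \vartheta_{f,I}(u) \;\leq\; \sum_{m=0}^M \widehat{F}_{I,M}^+(m)\Theta_m(u)\]
after expanding $F_{I,M}^\pm$ in Chebyshev polynomials. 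Plugging this into the smoothing inequalities above yields two-sided bounds of the form $\sum_{m=0}^M \widehat{F}_{I,M}^\pm(m)\cdot \frac{1}{y}\int_x^{x\pm y}\Theta_m(u)\,du$ for $\vartheta_{f,I}(x)$.

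To reach the shape of the statement, I would then peel off the $m = 0$ summand. Writing $\Theta_0(u) = (\Theta_0(u) - u) + u$ and using $\frac{1}{y}\int_x^{x\pm y} u\,du = x \pm y/2$ splits the $m = 0$ contribution into a $\widehat{F}_{I,M}^\pm(0)\cdot x$ piece, a shift $\pm \widehat{F}_{I,M}^\pm(0)\cdot y/2$, and a smoothed $(\Theta_0(u) - u)$-integral. The difference $\widehat{F}_{I,M}^\pm(0) - \mu_{\ST}(I)$ is bounded by $\frac{4}{M+1}$ via property (2) of Lemma \ref{lemma:trig_polys}, producing the $\frac{4x}{M+1}$ term. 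Since $\mu_{\ST}(I) \in [0,1]$, property (2) also yields $|\widehat{F}_{I,M}^\pm(0)| \leq 1 + \frac{4}{M+1}$, so the shift contributes at most $\left(\frac{1}{2} + \frac{2}{M+1}\right)y$ in absolute value and the residual $(\Theta_0(u) - u)$-integral picks up the coefficient $1 + \frac{4}{M+1}$. Finally, for $1 \leq m \leq M$ property (3) supplies $|\widehat{F}_{I,M}^\pm(m)| \leq 4\left(\frac{1}{M+1} + \frac{1}{\pi m}\right)$. Taking absolute values inside the $\Theta_m$-integrals and merging the upper and lower bounds into a single maximum over the sign choice $\pm$ recovers the claimed inequality.

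I expect no serious obstacle beyond careful bookkeeping: the argument is a short-interval-smoothed variant of the Erd\H{o}s--Tur\'an-style estimate of \cite{RT}. The main subtlety is tracking sign conventions when rewriting $\frac{1}{y}\int_{x-y}^x$ as $-\frac{1}{y}\int_x^{x-y}$ so that the lower-bound direction fits into the $\max_\pm$ notation of the statement. The freedom in choosing $y$ (constrained only by $1 < x-y < x$, which is needed so the smoothing interval lies in the range where $\Theta_m$ is defined by primes) is precisely what will later allow one to tune the short-interval length against the other error terms when this lemma is invoked in Section \ref{section-main-theorem}.
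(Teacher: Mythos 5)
Your proof is correct and follows essentially the same approach as the paper: monotonicity of $\vartheta_{f,I}$ gives the two-sided short-interval smoothing, Lemma \ref{lemma:trig_polys} provides the Chebyshev sandwich and the coefficient bounds, and the $m=0$ term is peeled off by writing $\Theta_0(u) = (\Theta_0(u)-u)+u$. The paper's proof is terser but identical in structure; your bookkeeping for the constant term and the $y/2$ shift is exactly right.
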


\begin{proof}
To prove an upper bound on $\vartheta_{f,I}(x)-\mu_{\ST}(I)x$, we observe by monotonicity of $\vartheta_{f,I}(x)$ that 
\[
    \vartheta_{f,I}(x) - \mu_{\ST}(I)x \leq \frac{1}{y} \int_x^{x+y} \vartheta_{f,I}(u) \, du - \mu_{\ST}(I) x.
\]
Applying Lemma \ref{lemma:trig_polys} gives
\[
    \frac{1}{y}\int_{x}^{x+y} \vartheta_{f,I}(u)\, du \leq \sum_{m=0}^M \left( \widehat{F}_{I,M}^+(m) \cdot \frac{1}{y} \int_x^{x+y} \Theta_m(u)\, du \right).
\]
We obtain the stated result by using Lemma \ref{lemma:trig_polys} to bound each $\widehat{F}_{I,M}^+(m)$. The lower bound for $\vartheta_{f,I}(x)-\mu_{\ST}(I)x$ follows similarly.
\end{proof}

To estimate the terms in Lemma \ref{erdos-turan}, we use the following lemmas. 

\begin{lemma} \label{lemma:theta0} 
For all $x, y \geq 0$ such that $x-y \geq 3$, we find that
\begin{align*} \left| \frac{1}{y} \int_{x}^{x \pm y} (\Theta_0(u)-u)\ du\right| \leq \log N + \Cr{c_{60}}\frac{x+y}{\log(x+y)},
\end{align*} where $\Cr{c_{60}} := 1.2323$.
\end{lemma}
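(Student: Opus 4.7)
My plan is to reduce the estimate to explicit bounds on $\vartheta(x)-x$ plus a clean treatment of the contribution from primes dividing $N$.

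\emph{Step 1: Decompose $\Theta_0$.} From the definition and the fact that $U_0 \equiv 1$, we have
\begin{equation*}
    \Theta_0(u) = \sum_{\substack{p \leq u \\ p \nmid N}} \log p = \vartheta(u) - \sum_{\substack{p \leq u \\ p \mid N}} \log p.
\end{equation*}
Since the primes dividing $N$ multiply to something dividing $N$, the second sum is at most $\log N$. By the triangle inequality, for each $u$ in the range of integration,
\begin{equation*}
    |\Theta_0(u) - u| \leq |\vartheta(u) - u| + \log N.
\end{equation*}

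\emph{Step 2: Input an explicit Chebyshev bound.} Apply a known explicit bound of the form
$|\vartheta(u)-u| \leq \Cl{c_{60}}\, u/\log u$ valid for all $u \geq 3$, drawing on the work of Dusart and/or Trudgian cited in the paper. The constant $\Cr{c_{60}}=1.2323$ in the statement should be an off-the-shelf value from such a table of explicit prime-counting estimates; my plan is simply to cite the sharpest convenient inequality of this form. Because the hypothesis $x-y \geq 3$ ensures $u \geq 3$ throughout the domain of integration, the bound applies uniformly.

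\emph{Step 3: Carry out the integration.} Pulling the absolute value inside and using Step 1,
\begin{equation*}
    \left| \frac{1}{y} \int_{x}^{x\pm y} (\Theta_0(u)-u)\,du \right| \leq \log N + \frac{\Cr{c_{60}}}{y}\int_{\min(x,x\pm y)}^{\max(x,x\pm y)} \frac{u}{\log u}\,du.
\end{equation*}
Since $u/\log u$ is increasing for $u \geq e$ (which is ensured by $x-y \geq 3$), the mean value of $u/\log u$ over any subinterval of $[x-y,x+y]$ is bounded above by its value at the right endpoint, namely $(x+y)/\log(x+y)$. This gives the stated bound.

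\emph{Main obstacle.} There is no deep obstacle here; the only care required is to ensure that the explicit Chebyshev-type inequality being cited truly holds for all $u$ in the integration range (which is why the hypothesis $x-y \geq 3$ is built into the statement) and to verify that the monotonicity argument in Step 3 indeed yields the endpoint $x+y$ in both the $+y$ and $-y$ cases. The latter is immediate because $[x-y,x] \subseteq [x-y,x+y]$, so the average of $u/\log u$ over $[x-y,x]$ is bounded by the maximum $(x+y)/\log(x+y)$ as well.
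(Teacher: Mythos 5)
Your proposal is correct and matches the paper's proof essentially exactly: both decompose $\Theta_0(u) = \vartheta(u) - \sum_{p\mid N, p\le u}\log p$, bound the ramified contribution by $\log N$, invoke Dusart's explicit estimate $|\vartheta(u)-u|\le 1.2323\,u/\log u$, and use that the resulting integrand is bounded pointwise by its value at $u=x+y$. The only cosmetic difference is that the paper cites the Dusart inequality for $u\ge 2$ rather than $u\ge 3$, which changes nothing under the hypothesis $x-y\ge 3$.
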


\begin{proof}
We have
\begin{align*}
\left| \frac{1}{y} \int_{x}^{x \pm y} (\Theta_0(u)-u)\ du \right| & \leq \bigg|\frac{1}{y} \int_{x}^{x \pm y} \bigg| \vartheta(u)-u-\sum_{\substack{p\leq u\\ p\mid N}} \log p \bigg|\ du \bigg| \\
&\leq 1.2323\frac{x+y}{\log(x+y)} + \log{N},
\end{align*} where in the last line we use the bound $|\vartheta(u)-u| \leq 1.2323  \cdot \frac{u}{\log u}$ for $u \geq 2$ due to Dusart \cite[Theorem 5.2]{Du}.
\end{proof}

\begin{lemma} \label{lemma4.3}
For $x, y > 0$ and $m \geq 1$, we have
\begin{align} 
\left| \frac{1}{y} \int_x^{x + y} \Theta_m(u) du \right| & \leq \left| \frac{1}{y} \int_{x}^{x + y} \sum_{n\leq u} \Lambda_{\Sym^m f}(n)\ du\right|  + (m+1)\log N \nonumber \\
& \quad +\left(1 + \frac{1}{36260}\right) (m+1)\sqrt{x}\left(1+\frac{y}{x}\right)\log(x+y).
\end{align}
\end{lemma}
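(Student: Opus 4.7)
The plan is to relate $\Theta_m(u)$ to the partial sum $\sum_{n \leq u}\Lambda_{\Sym^m f}(n)$ by isolating the contributions of ramified primes and of higher prime powers, each of which is controlled by trivial bounds on Satake parameters and Chebyshev polynomials.

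From \eqref{eq:lambda_coeffs_def}, the terms with $\ell = 1$ and $p \nmid N$ contribute exactly $\Theta_m(u)$, so
\begin{equation*}
\sum_{n \leq u}\Lambda_{\Sym^m f}(n) - \Theta_m(u) = \sum_{\substack{p \mid N \\ p \leq u}} \sum_{j=0}^m \alpha_{j,\Sym^m f}(p)\log p \;+\; \sum_{\substack{p^\ell \leq u \\ \ell \geq 2}} c_{p,\ell}\log p,
\end{equation*}
where $c_{p,\ell} = U_m(\cos(\ell \theta_p))$ if $p \nmid N$ and $c_{p,\ell} = \sum_{j=0}^m \alpha_{j,\Sym^m f}(p)^\ell$ if $p \mid N$. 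In either case $|c_{p,\ell}| \leq m+1$, using $|U_m(\cos\theta)| \leq m+1$ together with the Ramanujan bound $|\alpha_{j,\Sym^m f}(p)| \leq 1$. Consequently the first sum is bounded in absolute value by $(m+1)\log N$, and the second sum is bounded by $(m+1)(\psi(u) - \vartheta(u))$, where $\psi$ and $\vartheta$ are the standard Chebyshev functions.

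Taking absolute values, integrating over $[x,x+y]$, and applying the triangle inequality, we reduce to showing
\begin{equation*}
\frac{1}{y}\int_x^{x+y}(\psi(u) - \vartheta(u))\,du \leq \left(1 + \tfrac{1}{36260}\right)\sqrt{x}\left(1 + \tfrac{y}{x}\right)\log(x+y).
\end{equation*}
Since $\psi - \vartheta$ is non-decreasing, the left side is bounded by $\psi(x+y) - \vartheta(x+y) = \sum_{2 \leq \ell \leq \log_2(x+y)}\vartheta((x+y)^{1/\ell})$. Applying Dusart's bound $\vartheta(w) \leq (1 + \tfrac{1}{36260})w$ termwise, and using $\sqrt{x+y} \leq (1 + y/x)\sqrt{x}$ (equivalent to $x \leq x+y$), it suffices to prove the elementary inequality $\sum_{\ell \geq 2} v^{1/\ell} \leq \sqrt{v}\log v$ for $v = x+y \geq 4$.

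This elementary inequality is the only slightly subtle point: the naive bound $(\lfloor\log_2 v\rfloor - 1)\sqrt{v}$ overshoots by a factor of about $1/\log 2$, so one must isolate the $\ell = 2$ term (which contributes $\sqrt{v}$) and use $v^{1/\ell} \leq v^{1/3}$ for $\ell \geq 3$ to bound the tail by $(\lfloor\log_2 v\rfloor - 2)v^{1/3}$. The claim then reduces to $1 + (\lfloor\log_2 v\rfloor - 2)v^{-1/6} \leq \log v$ for $v \geq 4$, which is readily verified by a direct check of the small cases $v \in [4,10]$ and a routine asymptotic comparison for larger $v$. No further obstacle arises; the remaining steps are the triangle inequality and monotonicity of $\psi - \vartheta$.
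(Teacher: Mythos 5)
Your proof is correct and follows essentially the same route as the paper: decompose $\sum_{n\leq u}\Lambda_{\Sym^m f}(n) - \Theta_m(u)$ into a ramified-prime contribution bounded by $(m+1)\log N$ and a higher-prime-power contribution bounded by $(m+1)(\psi(u)-\vartheta(u))$, then apply Dusart's $\vartheta(w)\leq(1+\tfrac{1}{36260})w$ and the elementary inequality $\sum_{2\le\ell\le\log_2 v}v^{1/\ell}\leq\sqrt{v}\log v$. You spell out the last elementary step (which the paper states without justification) somewhat more carefully, and you integrate $\psi-\vartheta$ via monotonicity before applying Dusart whereas the paper bounds the integrand pointwise first, but these are immaterial differences.
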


\begin{proof}
Comparing the coefficients $\Lambda_{\Sym^m f}(u)$ as given by \eqref{eq:lambda_coeffs_def} with the definition of $\Theta_m(u)$ at ramified and unramified primes, we find that
    \begin{align} \label{thetalambda2}
        & \left|\int_{x}^{x+y} \Theta_m(u) \  du - \int_{x}^{x + y} \sum_{n \leq u } \Lambda_{\Sym^m f}(n) \, du \right| \nonumber\\
        & \ \ \ \leq \int_x^{x+y} \Bigg( \Bigg| \sum_{\substack{p|N \\ p \leq u}} \Lambda_{\mathrm{Sym}^m f}(p)\Bigg| + \Bigg| \sum_{\ell=2}^\infty \sum_{\substack{p^\ell \leq u}} \Lambda_{\mathrm{Sym}^m f}(p^\ell)\Bigg| \Bigg)\,du.
    \end{align} 
The first term in the integrand of \eqref{thetalambda2} satisfies
\begin{align*}
    \Bigg|\sum_{\substack{p|N \\ p \leq u}} \Lambda_{\mathrm{Sym}^m f}(p)\Bigg| \leq (m+1)\sum_{p \mid N} \log{p} \leq (m+1)\log{N},
\end{align*} 
and the second term satisfies
\begin{align*}
    \Bigg| \sum_{\ell=2}^\infty \sum_{\substack{p^\ell \leq u}} \Lambda_{\mathrm{Sym}^m f}(p^\ell)\Bigg| &\leq (m+1) \sum_{\ell=2}^{\lfloor\log{u}\rfloor} \vartheta(u^{\frac{1}{\ell}}) \leq \left(1+{\textstyle \frac{1}{36260}}\right)(m+1)u^{\frac{1}{2}}\log(u),
\end{align*} 
where we again use the bound $\vartheta(x) - x < \frac{1}{36260} \, x$ for $x > 0$ due to Dusart \cite[Section 1]{Du}. These bounds combine to give the desired result.
\end{proof}

Our estimates of the main term of the integral in Lemma \ref{lemma4.3} rely upon the following observation, which follows from the definition of $\Lambda_{\Sym^m{f}}(n)$ by applying standard contour integration techniques to evaluate the integral $-\frac{1}{2\pi i}\int_{2-i\infty}^{2+i\infty}\frac{L'}{L}(s, \Sym^m{f})\frac{(x+y)^s - x^s}{s(s+1)}\,ds$.
\begin{lemma} \label{integral}
For $m\geq 1$, we have 
   \begin{align*}
    \int_x^{x+y} \sum_{n \leq u} \Lambda_{\mathrm{Sym}^m f}(n)\,du &= R_1(x, y, \Sym^m f) + R_2(x, y, \Sym^m f) + R_3(x, y, \Sym^m f),
\end{align*} where \begin{align*}
    R_1(x, y, \Sym^m f) := -\!\!\!\!\!\!\! &\sum_{\rho\ \text{nontrivial}}\!\!\!\!\!\!\! \frac{(x+y)^{\rho+1} - x^{\rho+1}}{y\rho(\rho+1)}, \quad R_2(x, y, \Sym^m f) := -\!\!\!\sum_{\substack{\rho\ \text{trivial} \\ \rho \neq 0, -1}}\!\!\! \frac{(x+y)^{\rho+1} - x^{\rho+1}}{y\rho(\rho+1)}, \\
    R_3(x, y, \Sym^m f) &:= -\frac{1}{y}(\mathrm{Res}_{s=0} + \mathrm{Res}_{s=-1}) \left(\frac{L'}{L}(s, \mathrm{Sym}^m f)\frac{(x+y)^{s+1}-x^{s+1}}{s(s+1)}\right).
\end{align*}
Here, the sums are taken over the nontrivial and trivial zeros (as defined in Section \ref{section-symmetric-powers}) of $L(s,\mathrm{Sym}^m f)$.
\end{lemma}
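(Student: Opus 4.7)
The plan is to realize the left-hand side as a smoothed Perron contour integral on $\Re(s)=2$ and then evaluate it by shifting the contour to $\Re(s)=-T$ with $T\to\infty$, applying the residue theorem. Starting from the absolutely convergent Dirichlet series $-\tfrac{L'}{L}(s,\Sym^m f) = \sum_n \Lambda_{\Sym^m f}(n)n^{-s}$ on $\Re(s)>1$ and applying the smoothed Perron identity
\[
\frac{1}{2\pi i}\int_{c-i\infty}^{c+i\infty}\frac{u^{s+1}}{s(s+1)}\,ds = \max(u-1,0) \qquad (c>0)
\]
termwise (justified by Fubini, using absolute convergence on $\Re(s)=2$ and the $O(1/|t|^2)$ decay of the Mellin kernel), one obtains
\[
\frac{1}{2\pi i}\int_{2-i\infty}^{2+i\infty}\Bigl(-\frac{L'}{L}(s,\Sym^m f)\Bigr)\frac{(x+y)^{s+1}-x^{s+1}}{y\,s(s+1)}\,ds = \frac{1}{y}\int_x^{x+y}\sum_{n\leq u}\Lambda_{\Sym^m f}(n)\,du,
\]
in agreement with the $1/y$ normalization built into $R_1,R_2,R_3$.

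Next, I shift the line of integration from $\Re(s)=2$ to $\Re(s)=-T$ along a subsequence $T_k\to\infty$ avoiding a uniformly bounded neighborhood of every nontrivial zero. The integrand is meromorphic in $\mathbb{C}$, with poles at: (i) the nontrivial zeros $\rho$ of $L(s,\Sym^m f)$, where the residue is $-((x+y)^{\rho+1}-x^{\rho+1})/(y\,\rho(\rho+1))$ (weighted by multiplicity), assembling into $R_1$; (ii) the trivial zeros of $L(s,\Sym^m f)$ on the non-positive real axis other than $s=0,-1$, assembling into $R_2$; and (iii) the two extra poles $s=0$ and $s=-1$ coming from the kernel factor $1/(s(s+1))$, whose combined contribution is $R_3$ by the Laurent expansion indicated in the statement. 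Absolute convergence of $R_1$ follows from the order-$1$ property of $\Lambda(s,\Sym^m f)$, which gives $\sum_\rho |\rho(\rho+1)|^{-1}<\infty$; the sum $R_2$ converges rapidly since the trivial zeros form arithmetic progressions on the negative real axis with moduli growing linearly.

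The main obstacle will be justifying the contour shift, i.e.\ showing that the integrals over the vertical line $\Re(s)=-T_k$ and the horizontal segments $|\Im(s)|=T_k$ vanish as $k\to\infty$. For the vertical piece, the functional equation \eqref{eq:functional_equation} combined with Stirling's approximation applied to the gamma factors \eqref{gammasconj11} rewrites $\frac{L'}{L}(s,\Sym^m f)$ at $\Re(s)=-T_k$ as a controllable $\gamma'/\gamma$ term plus $\frac{L'}{L}(1-s,\Sym^m f)$ on the right half plane, which is uniformly bounded by \eqref{RTlemma}; the factor $|(x+y)^{s+1}|\asymp (x+y)^{1-T_k}$ then provides exponential decay. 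For the horizontal pieces, the partial-fraction expansion of $L'/L$ coming from the Hadamard product \eqref{eq:log_deriv_hadamard_prod}, together with the standard device of choosing $T_k$ bounded away from zeros, yields a polynomial-in-$T_k$ bound for $|L'/L(\sigma\pm iT_k,\Sym^m f)|$, which is absorbed by the $1/|s(s+1)|\asymp 1/T_k^2$ decay of the kernel.
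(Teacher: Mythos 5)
Your proof takes essentially the same approach as the paper: set up the smoothed Perron integral $\frac{1}{2\pi i}\int_{2-i\infty}^{2+i\infty}\bigl(-\frac{L'}{L}\bigr)\frac{(x+y)^{s+1}-x^{s+1}}{s(s+1)}\,ds$, shift the contour to the left over a rectangle of growing width and height, classify the residues (nontrivial zeros $\to R_1$, trivial zeros apart from $0,-1$ $\to R_2$, the two kernel poles at $s=0,-1$ $\to R_3$), and show the contribution of the other three sides of the rectangle vanishes in the limit. The only procedural difference is how the far vertical leg is controlled: the paper leans entirely on the Hadamard-product expansion \eqref{eq:log_deriv_hadamard_prod} for all three legs, while you invoke the functional equation and Stirling for the vertical piece; both are standard and correct, and your version is in fact consistent about the $1/y$ normalization and the $s+1$ exponent in the Perron kernel, whereas the paper's displayed proof briefly drops both.
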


\begin{proof}
We first note that \begin{align*}
    -\frac{1}{2\pi i}\int_{2-i\infty}^{2+i\infty} \frac{L'}{L}(s, \Sym^m{f}) \frac{(x+y)^s - x^s}{s(s+1)}\,ds &= \sum_{n=1}^\infty \frac{\Lambda_{\Sym^m{f}}(n)}{2\pi i}\int_{2-i\infty}^{2+i\infty} \frac{\left(\frac{x+y}{n}\right)^s-\left(\frac{x}{n}\right)^s}{s(s+1)}\,ds \\
    &= \int_x^{x+y} \sum_{n \leq u} \Lambda_{\mathrm{Sym}^m f}(n)\,du,
\end{align*} where the first equality follows because $\sum_{n=1}^\infty |\Lambda_{\Sym^m{f}}(n)| \int_{2-i\infty}^{2+i\infty} \frac{|(x+y)^s|+|x^s|}{|n^s s(s+1)|}\,ds$ converges. Meanwhile, for any $H \in \frac{1}{4}\mathbb{N} \setminus \frac{1}{2}\mathbb{N}$ and $K > 0$ not equal to the imaginary part of any nontrivial zero of $L(s, \Sym^m{f})$, we may write \begin{align*}
    &-\frac{1}{2\pi i}\left(\int_{2-iK}^{2+iK} + \int_{2+iK}^{-H+iK} + \int_{-H+iK}^{-H-iK} + \int_{-H-iK}^{2-iK}\right) \frac{L'}{L}(s, \Sym^m{f})\frac{x^s}{s(s+1)}\,ds \\
    &\ \ \ \ \ = \sum_{\substack{-H < \Re(\rho) < 2 \\ -K < \Im(\rho) < K}} \Res_{s=\rho}\left(\frac{L'}{L}(s, \Sym^m{f})\frac{(x+y)^{s+1}-x^{s+1}}{s(s+1)}\right),
\end{align*} where the sum extends over all poles of $\frac{L'}{L}(s, \Sym^m{f})\frac{(x+y)^{s+1}-x^{s+1}}{s(s+1)}$ satisfying the specified conditions. From here, we may use \eqref{eq:log_deriv_hadamard_prod} to bound the contribution from the upper, left, and lower legs of integration as we take $H \to \infty$ and then $K \to \infty$ by considering the contribution from each term. These contributions go to zero, so we obtain the stated result.
\end{proof}

In Section \ref{Section:PerronIntegral}, we will prove the following bounds for $|R_1(x, y, \Sym^m f)|$, $|R_2(x, y, \Sym^m f)|$, and $|R_3(x, y, \Sym^m f)|$, which apply for $f$ satisfying the hypotheses of Theorem \ref{main}. Here, the $c_i$ are absolute constants whose values are stated in Appendix \ref{appendixconstants}.
    \begin{lemma}\label{asymptoticnontrivial} 
    For $T \geq 200$ and $m \geq 1,$ the contribution $R_1(x, y, \Sym^m f)$ from the nontrivial zeros satisfies
    \begin{align*}
        |R_1(x,y, \Sym^m f)| &\leq (m+1)\log((k-1)Q(m+2)T) \Bigg( x^{1-\eta_m(T)} \Cr{c_{233}}  \log((k-1)Q(m+2)T)\nonumber\\ 
        &\qquad \qquad \qquad +  {yx^{-\eta_m(T)}}\, \frac{\Cr{c_{303}}}{2}\, T  + x\left(\frac{2x}{y} + 2 + \frac{y}{x}\right) \frac{\Cr{c_{220}}}{T} \Bigg) \\
        & \qquad \qquad \qquad + x^{1-\eta_m(1)}\, \frac{\Cr{c_{310}} }{\Cr{zfr5}} (m+1)(m+7)^2 \log^2 (\Cr{zfr3} (k-1) Q (m+7)),
    \end{align*} 
    where $\eta_{m}(T) := \frac{\Cr{zfr5}}{(m+7)^2\log\left(\Cr{zfr3}(k-1)Q(m+7)\sqrt{T^2+1}\right)}$.
    \end{lemma}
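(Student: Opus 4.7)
My strategy is to partition the nontrivial zeros $\rho=\beta+i\gamma$ of $L(s,\Sym^m f)$ by height into the ranges $|\gamma|\leq 1$, $1<|\gamma|\leq T$, and $|\gamma|>T$, and to bound the contribution of each range to $R_1$ separately using the zero-free region of Section \ref{horizontalsection} together with the vertical zero count of Section \ref{verticalsection}. In each range I will deploy whichever of the two complementary estimates
\[
\left|\frac{(x+y)^{\rho+1}-x^{\rho+1}}{y\,\rho(\rho+1)}\right|\leq \min\!\left\{\frac{2(x+y)^{1+\beta}}{y\,|\rho(\rho+1)|},\ \frac{(x+y)^{\beta}}{|\rho|}\right\}
\]
is sharper: the first (trivial) bound follows from the triangle inequality, while the second (mean value) bound uses the identity $(x+y)^{\rho+1}-x^{\rho+1}=(\rho+1)\int_x^{x+y}u^{\rho}\,du$ together with $|\int_x^{x+y}u^{\rho}\,du|\leq y(x+y)^{\beta}$.

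For the low-lying zeros $|\gamma|\leq 1$, the strong zero-free region yields $\beta\leq 1-\eta_m(1)$, and Section \ref{verticalsection} supplies an upper bound of size $(m+1)(m+7)^2\log^2(\Cr{zfr3}(k-1)Q(m+7))$ on the number of such zeros. Applying the trivial estimate summand-by-summand then produces exactly the isolated $x^{1-\eta_m(1)}$ term at the end of the stated inequality.

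For the mid-range zeros $1<|\gamma|\leq T$, I apply $\beta\leq 1-\eta_m(T)$. Combining the mean value bound with the partial summation estimate $\sum_{1<|\gamma|\leq T}|\rho|^{-1}\lesssim (m+1)\log^2((k-1)Q(m+2)T)$ against $N_m(t):=\#\{\rho:|\gamma|\leq t\}\lesssim (m+1)t\log((k-1)Q(m+2)t)$ yields the first bracket term $x^{1-\eta_m(T)}\Cr{c_{233}}\log(\cdot)$, while the boundary contribution at height $T$ (bounding each summand by $(x+y)^{\beta}$ and multiplying by $N_m(T)$, with the $1/y$ prefactor in $R_1$ absorbed appropriately) produces the cruder $yTx^{-\eta_m(T)}$ term. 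For the high zeros $|\gamma|>T$, the trivial bound combined with the tail estimate $\sum_{|\gamma|>T}|\rho(\rho+1)|^{-1}\lesssim(m+1)\log((k-1)Q(m+2)T)/T$ and the factorization $(x+y)^2/(xy)\leq 2x/y+2+y/x$ produces the third bracket term. The common prefactor $(m+1)\log((k-1)Q(m+2)T)$ multiplying the bracketed terms arises from a single application of the vertical count in each piece.

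The main technical obstacle will be retaining sharp explicit constants through the partial summation arguments while carefully handling the boundary contributions at $|\gamma|=1$ and $|\gamma|=T$. In particular, one must track explicit lower bounds for $|\rho|$ and $|\rho(\rho+1)|$ in each regime and combine the numerical constants produced by the zero-free region of Section \ref{horizontalsection} with those from the vertical distribution bound of Section \ref{verticalsection} to obtain the specific values $\Cr{c_{233}}$, $\Cr{c_{303}}$, $\Cr{c_{220}}$, and $\Cr{c_{310}}$ catalogued in Appendix \ref{appendixconstants}.
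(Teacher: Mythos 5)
Your overall framework---partitioning the zeros by $|\gamma|\leq 1$, $1<|\gamma|\leq T$, $|\gamma|>T$, then feeding pointwise estimates into the zero-free region (Theorem~\ref{theorem:zfr}, Corollary~\ref{minrho}) and the vertical count (Theorem~\ref{Thm:RiemannVonMangoldt})---matches the paper, and the outer tail $|\gamma|>T$ is handled essentially as in the paper. However, your pair of pointwise estimates is not strong enough. The paper's Lemma~\ref{lemma:sum_over_rho_bounds} gives the sharper bound
\[
\left|\frac{(x+y)^{\rho+1}-x^{\rho+1}}{y\rho(\rho+1)}\right|\leq \frac{x^{\beta}}{|\rho|}+\frac{y}{2x}x^{\beta},
\]
obtained from the second-order expansion $\frac{(x+y)^{\rho+1}-x^{\rho+1}}{y\rho(\rho+1)}=\frac{x^\rho}{\rho}+O\!\left(\frac{y}{2x}x^\beta\right)$, and this---not your mean-value bound $(x+y)^{\beta}/|\rho|$---is what is applied on the entire range $|\gamma|\leq T$.

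The two places this matters are exactly where your account breaks down. First, for $|\gamma|\leq 1$ you propose the trivial (triangle-inequality) bound $\frac{2(x+y)^{1+\beta}}{y|\rho(\rho+1)|}$; after inserting $\beta\leq 1-\eta_m(1)$ this has a prefactor $\sim (x+y)^{2-\eta_m(1)}/y$, larger than the claimed $x^{1-\eta_m(1)}$ by roughly $2x/y$, which is unbounded. You must instead use the $x^{\beta}/|\rho|$ piece of the refined bound together with Corollary~\ref{minrho}, and note that the factor $(m+7)^2\log^2(\Cr{zfr3}(k-1)Q(m+7))$ is not the zero count: one logarithm comes from $N(1,\Sym^m f)\leq\Cr{c_{310}}(m+1)\log((k-1)Q(m+2))$, and $(m+7)^2\log(\cdots)$ comes from $1/\inf_\rho|\rho|$. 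Second, the term $yx^{-\eta_m(T)}\frac{\Cr{c_{303}}}{2}T$ is exactly $\frac{y}{2}x^{-\eta_m(T)}N(T,\Sym^m f)$, i.e.\ the sum of the $\frac{y}{2x}x^{\beta}$ remainder over all $|\gamma|\leq T$. Your explanation (``bounding each summand by $(x+y)^{\beta}$ and multiplying by $N_m(T)$, with the $1/y$ prefactor absorbed'') cannot produce the leading $y$: both your pointwise estimates already carry the $1/y$, so there is no prefactor left. What the mean-value bound actually yields on the mid-range is $(x+y)^{1-\eta_m(T)}\sum_{1<|\gamma|\leq T}|\gamma|^{-1}$; expanding $(x+y)^{1-\eta_m(T)}\leq x^{1-\eta_m(T)}+yx^{-\eta_m(T)}$ puts a $\log^2$ on the $y$-piece where the statement has $T\log$, and the needed comparison $\Cr{c_{233}}\log^2\leq\frac{\Cr{c_{303}}}{2}T\log$ is not guaranteed uniformly in $(k,Q,m)$. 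So the proof as outlined does not close; you need the refined Lemma~\ref{lemma:sum_over_rho_bounds} estimate in both regimes.
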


\begin{lemma}\label{trivialbound} \label{trivial} 
The contribution $R_2(x, y, \Sym^m f)$ from the trivial zeros $\rho\neq 0,-1$ satisfies
\begin{equation*}
    |R_2(x,y, \Sym^m f)| = \left|\sum_{\substack{\rho\ \text{trivial} \\ \rho \neq 0, -1}} \frac{(x+y)^{\rho+1} - x^{\rho+1}}{y\rho(\rho+1)}\right|\leq  \frac{2(m+1)}{3{\sqrt{x}}}.
\end{equation*}
\end{lemma}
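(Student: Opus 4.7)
The plan is to bound each summand pointwise and then sum carefully over the explicitly enumerated trivial zeros. First I would use the identity
\[
    (x+y)^{\rho+1} - x^{\rho+1} = (\rho+1)\int_x^{x+y} u^{\rho}\,du
\]
(valid for $\rho \neq -1$) to rewrite each term as $\frac{1}{y\rho}\int_x^{x+y} u^{\rho}\,du$. Because \eqref{gammasconj11} shows that every trivial zero is a nonpositive real number, for $\rho < 0$ the integrand $u^{\rho}$ is positive and monotonically decreasing on $[x,x+y]$. This yields the clean pointwise bound
\[
    \left|\frac{(x+y)^{\rho+1} - x^{\rho+1}}{y\rho(\rho+1)}\right| \leq \frac{x^{\rho}}{|\rho|},
\]
reducing the problem to estimating $\sum_{\rho} x^{\rho}/|\rho|$ over the relevant trivial zeros.

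Next I would enumerate the trivial zeros directly from \eqref{gammasconj11}. For $m$ odd, the zeros are $\rho_{j,n} = -(j-\tfrac{1}{2})(k-1) - n$ for $1 \leq j \leq (m+1)/2$ and $n \geq 0$, and since $k$ is even none coincide with $0$ or $-1$. For $m$ even there are additional poles at $\rho = -r - 2n$ from $\Gamma_{\mathbb{R}}(s+r)$ and at $\rho = -j(k-1) - n$ from the $m/2$ factors $\Gamma_{\mathbb{C}}(s + j(k-1))$, with $\rho = 0$ or $\rho = -1$ removed; all surviving zeros then satisfy $|\rho| \geq 2$. For each fixed $j$ the resulting inner sum is a geometric-type tail
\[
    \sum_{n=0}^{\infty} \frac{x^{-(j-1/2)(k-1) - n}}{(j-1/2)(k-1) + n} \;\leq\; \frac{x^{-(j-1/2)(k-1)}}{(j-1/2)(k-1)} \cdot \frac{x}{x-1},
\]
and an analogous bound applies to the even-$m$ factors.

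Summing these inner tails over $j$, the contributions from $j \geq 2$ are suppressed by at least $x^{-(k-1)}$ relative to $j = 1$, so the whole expression is controlled by the $j=1$ terms plus a bounded geometric remainder. The total number of gamma factors is at most $(m+3)/2$, producing the linear dependence on $(m+1)$, and invoking the range $x \geq 10^{100}$ guaranteed by Lemma \ref{triviality-bound} collapses the factor $x/(x-1)$ into essentially $1$. Combining these estimates gives the advertised bound $\frac{2(m+1)}{3\sqrt{x}}$.

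The main obstacle is tracking constants sharply enough to produce the coefficient $2/3$. In particular, the case $m = 1$, $k = 2$ is the most delicate, because the zero $\rho = -1/2$ by itself produces the largest individual summand. For this term I would replace the crude bound $x^{\rho}/|\rho| = 2/\sqrt{x}$ with the sharper identity
\[
    \left|\frac{(x+y)^{1/2} - x^{1/2}}{y \cdot (-1/2)(1/2)}\right| = \frac{4}{(x+y)^{1/2} + x^{1/2}},
\]
and then argue that the tails from $n \geq 1$, from $j \geq 2$, and from the even-$m$ zeros (all of which decay like $x^{-3/2}$ or faster) are comfortably absorbed into the remaining budget. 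All other cases are strictly easier and fit easily under the $\frac{2(m+1)}{3\sqrt{x}}$ envelope.
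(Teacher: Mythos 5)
Your outline — bound each residue pointwise, enumerate the trivial zeros from \eqref{gammasconj11}, sum the geometric tails, and then sharpen the single worst term $\rho=-\tfrac12$ — is essentially the paper's own approach (the paper packages the inner $\ell$-sum via the closed form $\tfrac{d^2}{du^2}\sum_\ell \tfrac{u^{-(c+\ell-1)}}{(c+\ell)(c+\ell-1)}=\tfrac{u^{-c}}{u-1}$, but this is the same computation). Your pointwise bound $\bigl|\tfrac{(x+y)^{\rho+1}-x^{\rho+1}}{y\rho(\rho+1)}\bigr|\leq \tfrac{x^\rho}{|\rho|}$ is correct, and you correctly identify the zero $\rho=-\tfrac12$ (present exactly when $m$ is odd and $k=2$) as the dangerous one.

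The gap is in the final paragraph, where you assert that the tails are ``comfortably absorbed into the remaining budget.'' There is no remaining budget. Your own sharp identity for $\rho=-\tfrac12$ gives
\begin{equation*}
\left|\frac{(x+y)^{1/2}-x^{1/2}}{y\,(-\tfrac12)(\tfrac12)}\right| \;=\; \frac{4}{(x+y)^{1/2}+x^{1/2}},
\end{equation*}
and as $y\to 0^+$ this tends to $\tfrac{2}{\sqrt{x}}$. For $m=1$ the claimed bound is $\tfrac{2(m+1)}{3\sqrt{x}}=\tfrac{4}{3\sqrt{x}}$, which is strictly smaller than $\tfrac{2}{\sqrt{x}}$; moreover all the remaining terms in $R_2$ carry the same sign, so they only increase the total. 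Thus the single term $\rho=-\tfrac12$ already exceeds the target, and the argument as written cannot close. (For what it is worth, the paper's own claim that the $k=2$, odd-$m$ case is bounded by $\tfrac{4(m+1)}{3y}(\sqrt{x+y}-\sqrt{x})$ appears to suffer from the same defect; integrating $\tfrac{u^{-1/2}}{u-1}$ twice produces a leading term $\tfrac{4}{y}(\sqrt{x+y}-\sqrt{x})$ rather than $\tfrac{4}{3y}(\cdots)$, so the correct constant in the lemma should be roughly $2(m+1)$ rather than $\tfrac{2(m+1)}{3}$. Since $R_2$ is a lower-order contribution to the final error term, this does not endanger the paper's main constant, but you should not take the stated $\tfrac{2}{3}$ on faith.) A minor further point: the lemma carries no hypothesis $x\geq 10^{100}$, so invoking Lemma~\ref{triviality-bound} to dispose of the $x/(x-1)$ factor is loose; you should either state explicitly the range of $x$ for which your bound is proved, or keep the factor $x/(x-1)$.
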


\begin{lemma}\label{residues} 
The contribution $R_3(x, y, \Sym^m f)$ from the residues at $s=0,-1$ satisfies
\begin{align*}
    |R_3(x,y, \Sym^m f)| & \leq \frac{\Cr{c_{310}} }{\Cr{zfr5}} (m+1) \log((k-1)Q(m+2)){(m+7)^2 \log(\Cr{zfr3}(k-1)Q(m+7))} \nonumber \\
    &\quad + 3(m+1) \log((k-1)Q(m+1))  + \Cr{reslast}(m+1) +  \log(x+y) + \frac{1}{x}.
\end{align*}
\end{lemma}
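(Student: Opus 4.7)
The plan is to compute the residues at $s=0$ and $s=-1$ of the integrand
\[ F(s) := \frac{L'}{L}(s,\Sym^m f)\cdot\frac{(x+y)^{s+1}-x^{s+1}}{s(s+1)} \]
directly by Laurent expansion, and then bound the result using the functional equation \eqref{eq:functional_equation}, the Hadamard product \eqref{eq:log_deriv_hadamard_prod}, and the explicit zero-free region and vertical zero count from Sections \ref{horizontalsection} and \ref{verticalsection}.

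\textbf{Step 1 (computing the residues).} Write $\frac{L'}{L}(s,\Sym^m f)=\frac{n_0}{s}+\lambda_0+O(s)$ near $s=0$, where $n_0\in\{0,1\}$ is the order of the trivial zero at $s=0$ (read off from the gamma factor \eqref{gammasconj11}). A direct expansion gives $\frac{(x+y)^{s+1}-x^{s+1}}{s(s+1)}=\frac{y}{s}+[(x+y)\log(x+y)-x\log x-y]+O(s)$, so
\[ \mathrm{Res}_{s=0}F(s)=n_0\bigl[(x+y)\log(x+y)-x\log x-y\bigr]+\lambda_0 y. \]
Using $(x+y)\log(x+y)-x\log x-y=\int_x^{x+y}\log u\,du\le y\log(x+y)$, the $s=0$ contribution to $|R_3|$ is at most $n_0\log(x+y)+|\lambda_0|$. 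At $s=-1$, the factor $(x+y)^{s+1}-x^{s+1}$ has a simple zero canceling the pole of $\frac{1}{s+1}$; the remaining pole comes only from the trivial zero of order $n_{-1}$ (bounded by an absolute constant) at $s=-1$, and a short calculation yields $\mathrm{Res}_{s=-1}F(s)=-n_{-1}\log((x+y)/x)$. Dividing by $y$ and using $\log(1+t)\le t$ yields a contribution of at most $n_{-1}/x$, accounting for the $\frac{1}{x}$ term.

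\textbf{Step 2 (bounding $\lambda_0$).} Since $\frac{\Lambda'}{\Lambda}(s)=\frac{L'}{L}(s)+\frac{\gamma'}{\gamma}(s)+\frac{1}{2}\log q_{\Sym^m f}$ is regular at $s=0$ (the poles of order $n_0$ cancel), the functional equation gives $\frac{\Lambda'}{\Lambda}(0)=-\frac{\Lambda'}{\Lambda}(1)$; equating Laurent constants yields
\[ \lambda_0=-\tfrac{L'}{L}(1)-\tfrac{\gamma'}{\gamma}(1)-\mu_0-\log q_{\Sym^m f}, \]
where $\mu_0$ is the constant term of $\frac{\gamma'}{\gamma}(s)$ at $s=0$. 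Applying \eqref{eq:log_deriv_hadamard_prod} at $s=1$ and $s=2$ and subtracting eliminates the unknown constant $B_{\Sym^m f}$ and gives
\[ \tfrac{L'}{L}(1)=\tfrac{L'}{L}(2)+\tfrac{\gamma'}{\gamma}(2)-\tfrac{\gamma'}{\gamma}(1)-\sum_{\rho}\tfrac{1}{(1-\rho)(2-\rho)}. \]
Substituting and taking absolute values yields
\[ |\lambda_0|\le \bigl|\tfrac{L'}{L}(2)\bigr|+\bigl|\tfrac{\gamma'}{\gamma}(2)\bigr|+|\mu_0|+\log q_{\Sym^m f}+\Bigl|\sum_{\rho}\tfrac{1}{(1-\rho)(2-\rho)}\Bigr|. \]
The first term is bounded by $(m+1)|\zeta'/\zeta(2)|$ via \eqref{RTlemma}; applying Stirling to each $\Gamma_{\mathbb{R}},\Gamma_{\mathbb{C}}$ factor in \eqref{gammasconj11} bounds $|\gamma'/\gamma(j)|$ and $|\mu_0|$ by multiples of $(m+1)\log((k-1)(m+1))$; and $\log q_{\Sym^m f}\le(m+1)\log Q$ by hypothesis. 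Summed, these yield the $3(m+1)\log((k-1)Q(m+1))+\Cr{reslast}(m+1)$ portion of the stated bound.

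\textbf{Step 3 (the sum over zeros).} The remaining and dominant quantity is the sum $\sum_{\rho}\frac{1}{(1-\rho)(2-\rho)}$. Splitting at $|\Im(\rho)|=1$, low-lying zeros are controlled using the explicit zero-free region from Section \ref{horizontalsection} (which forces $|1-\rho|\gg \Cr{zfr5}/(m+7)^2\log(\Cr{zfr3}(k-1)Q(m+7))$), while zeros with $|\Im(\rho)|>1$ are handled by partial summation against the vertical zero-count estimate from Section \ref{verticalsection}. This yields a bound of the shape $\frac{\Cr{c_{310}}}{\Cr{zfr5}}(m+1)\log((k-1)Q(m+2))(m+7)^2\log(\Cr{zfr3}(k-1)Q(m+7))$, matching the dominant term in the lemma. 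Combining the three steps completes the proof.

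The chief obstacle is precisely the book-keeping in Step 3: producing the stated explicit constants requires matching the constants from the zero-free region (Section \ref{horizontalsection}) with those from the vertical distribution (Section \ref{verticalsection}), and ensuring the low-lying-zero contribution absorbs cleanly into the same shape $(m+7)^2\log(\cdots)$ as the zero-free region bound — the same mechanism that produced the analogous term in Lemma \ref{asymptoticnontrivial}.
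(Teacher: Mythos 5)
Your proposal is structurally the same as the paper's — Laurent expansion of the integrand, reduction to bounding the constant $\lambda_0$ in the Laurent expansion of $\frac{L'}{L}(s,\Sym^m f)$ at $s=0$, and a low/high split of the resulting zero sum at height $1$ using Corollary \ref{minrho} and the vertical count — but it takes a different route in Step~2. Where you invoke the functional equation to write $\frac{\Lambda'}{\Lambda}(0)=-\frac{\Lambda'}{\Lambda}(1)$ and then subtract two evaluations of \eqref{eq:log_deriv_hadamard_prod} at $s=1$ and $s=2$ to cancel $B_{\Sym^m f}$, arriving at the absolutely convergent sum $\sum_\rho \frac{1}{(1-\rho)(2-\rho)}$, the paper simply evaluates the Hadamard-product identity directly at $s=0$ (or at a nearby real point in the $4\mid m$ case where the poles of $\frac{L'}{L}$ and $\frac{\gamma'}{\gamma}$ cancel) together with \eqref{eq:B}, which yields the conditionally convergent $\sum_\rho \frac{1}{\rho}$. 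The paper then handles $\sum_{|\gamma|\geq 1}\frac{1}{\rho}$ via a second Hadamard evaluation at $\sigma_1=\tfrac{3}{2}$ (as in Lemma \ref{N1T}), whereas your absolutely convergent sum admits direct partial summation against $N(T,\Sym^m f)$; your route is slightly cleaner in that respect, at the cost of the detour through $s=1,2$ and the need to also bound $\mu_0$. For your low-lying contribution you should observe that the functional-equation symmetry $\rho\mapsto 1-\rho$ gives $\inf_\rho|1-\rho|=\inf_\rho|\rho|$, so that Corollary \ref{minrho} applies verbatim. Both Step~2 approaches do produce the same shape of bound; whether your variant reproduces the exact numerical constants $\Cr{c_{310}}/\Cr{zfr5}$, $3$, and $\Cr{reslast}$ would still require the same book-keeping the paper carries out, which you have not done.
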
 

Finally, we are equipped to prove the main theorem of this section.

\begin{proof}[Proof of Theorem \ref{main}]
We bound the various terms in Lemma \ref{erdos-turan} using Lemmas \ref{lemma:theta0}, \ref{lemma4.3}, \ref{integral}, \ref{asymptoticnontrivial}, \ref{trivialbound}, and \ref{residues}. It then suffices to choose $M$, $T,$ and $y$ appropriately in terms of $x$ to balance the contributions of the low- and high-lying zeros.

We first define the function \begin{equation}
    \mathcal{M}_f(x):=\frac{\sqrt{\Cr{zfr5}\log{x}}}{2\log((k-1)Q \log{x})},
\end{equation} and for a given $x, k, Q$, fix the quantities 
\begin{equation*}
    M = \left\lfloor  \mathcal{M}_f(x) - 7\right\rfloor,\quad \quad T = \frac{2 (k-1)Q}{\Cr{zfr3}\sqrt{1.01\Cr{zfr5}}} (\log{x})^{\frac{3}{2}} \log((k-1)Q \log{x}).
\end{equation*}

Substituting these quantities into Lemma \ref{erdos-turan} requires $M \geq 1$, which may only hold for large $x$ ($T\geq 1$, while also necessary, holds in all relevant cases; in fact, we freely use $ T \geq 200$, which holds for $x \geq 4$). In particular, we may assume that $x$ exceeds $10^{100}$; using Lemma \ref{triviality-bound}, the bound in Theorem \ref{main} holds trivially for smaller values of $x$. 

With these choices of $M$ and $T$, we can bound $x^{-\eta_m(T)}$ for $m \leq M$ by noting that for $T \geq 10,$ we have
\begin{align*}
    \exp\left(-(\log{x}) {\eta_{m}(T)}\right) & \leq  \exp \left(- \frac{\Cr{zfr5}\log{x}}{\mathcal{M}_f(x)^2\log(\Cr{zfr3}\sqrt{1.01}(k-1)Q \mathcal{M}_f(x) T )}\right) \leq ((k-1)Q \log{x})^{-2}.
\end{align*} We may also bound $x^{-\eta_m(1)}$ directly, via
\begin{align*}
    \exp\left( -(\log{x})\eta_m(1) \right) & \leq \exp\left(- \frac{\Cr{zfr5}\log{x}}{\mathcal{M}_f(x) ^2 \log(\sqrt{2}\Cr{zfr3}(k-1)Q\mathcal{M}_f(x) )}\right) \leq ((k-1)Q\log{x})^{-4}.
\end{align*} Finally, we choose the value
\begin{equation*}
     y =  \frac{x{\Cr{zfr3}\sqrt{1.01\Cr{zfr5}}}}{2\sqrt{\log{x}}\log((k-1)Q\log{x})}  ,
\end{equation*}
which is approximately ${T^{-1}x^{1+\eta_M(T)/2}}$, to balance the contributions from $yx^{-\eta_M(T)}$ and $\frac{x^2}{yT}$.

Using these values, we are prepared to complete our calculation by applying our various bounds on $|\int_x^{x+y}\Theta_m (u)du|$. Note that for $x-y \geq 2$, we may make the change of variables $x \mapsto x-y, y \mapsto y$ to also bound the integral $|\int_{x-y}^x \Theta_m(u)du|$ using the same methods. In particular, Lemmas \ref{lemma4.3} and \ref{integral} give us the following bound, for $m \geq 1$:
\begin{align} \label{eq:pm_theta_integral}
    \left| \frac{1}{y} \int_x^{x\pm y} \Theta_m(u) du \right| & \leq \max\bigg\{ \sum_i |R_i(x,y, \Sym^m f)| + {\textstyle \left(1+\frac{1}{36260}\right)} (m+1)\sqrt{x}\left(1+\frac{y}{x}\right)\log(x+y), \nonumber\\
    & \quad \qquad  \sum_i |R_i(x-y,y, \Sym^m f)|  +  {\textstyle \left(1+\frac{1}{36260}\right)} (m+1)\sqrt{x}\left(1+\frac{y}{x}\right)\log(x+y) \bigg\} \nonumber \\
    & \quad + (m+1)\log(N).
\end{align}

\noindent Using Lemma \ref{asymptoticnontrivial} and the fact that $(x-y)^{1-\eta_m(T)} \leq x^{1-\eta_m(T)}$, we observe that
\begin{align*}
    \max\bigg\{ & |R_1(x,y, \Sym^m f)|, |R_1(x-y, y, \Sym^m f)| \bigg\} \\
    &\leq (m+1)\log((k-1)Q(m+2)T) \Bigg( x^{1-\eta_m(T)} \Cr{c_{233}}  \log((k-1)Q(m+2)T)\nonumber\\ 
        &\qquad \qquad \qquad +  {\frac{y}{x-y}\,x^{1-\eta_m(T)}}\, \frac{\Cr{c_{303}}}{2}\, T  + \left(\frac{2x^2}{y} + 2x + {y}\right) \Cr{c_{220}}\frac{1}{T} \Bigg) \\
        & \qquad \qquad + x^{1-\eta_m(1)}\, \frac{\Cr{c_{310}}}{\Cr{zfr5}}\, (m+1)(m+7)^2 \log^2 (\Cr{zfr3} (k-1) Q (m+7)),
\end{align*}
We then bound this term for $m \leq M$ by replacing any $m+a$ inside a logarithm with $\mathcal{M}_{f,I}(x)$, which produces a bound on each term $\max\{|R_1(x, y, \Sym^m f)|, |R_1(x-y, y, \Sym^m f)|\}$ in the sum $\sum_{m=1}^M \left( \frac{1}{M+1} + \frac{1}{\pi m} \right) \Big| \frac{1}{y}\int_x^{x\pm y} \Theta_m(u) du\Big|$ in Lemma \ref{erdos-turan}.

We bound the other terms in \eqref{eq:pm_theta_integral} similarly. Lemma \ref{trivial} shows that 
    \begin{align*}
        |\max\{R_2(x,y, \Sym^m f), R_2(x-y, y, \Sym^m f)\}| \leq \frac{2}{3} \cdot \frac{(m+1)}{\sqrt{x-y}},
    \end{align*}
and Lemma \ref{residues} shows that
    \begin{align*}
        |\max & \{R_3(x,y, \Sym^m f), R_3(x-y, y, \Sym^m f)\}| \\
        &\leq \frac{\Cr{c_{310}} }{\Cr{zfr5}} (m+1) \log((k-1)Q(m+2)){(m+7)^2 \log(\Cr{zfr3}(k-1)Q(m+7))} \nonumber \\
        &\quad + 3(m+1) \log((k-1)Q(m+1))  + \Cr{reslast}(m+1) +  \log(x+y) + \frac{1}{x-y}.
    \end{align*}

\noindent As most of our summands will have a factor of $m+1$, we will use the simplification
\begin{equation*}
    \sum_{m=1}^M \left( \frac{1}{M+1}+ \frac{1}{\pi m}\right)(m+1) \leq  \left( 1 + \frac{2}{\pi}\right)\mathcal{M}_f(x),
\end{equation*}
as well as the bound $\sum_{m=1}^M \left( \frac{1}{M+1} + \frac{1}{\pi m} \right) \leq 1 + \frac{1}{\pi}\left(\log(\mathcal{M}_f(x)) + 1 \right)$ throughout.

With this in mind, we may bound $|\vartheta_{f,I}(x)-\mu_{\ST}(I)x|$ using Lemma \ref{erdos-turan}. For any $M_0 \geq 1$, if we choose $x$ large enough so that $M \geq M_0$, then a computer-assisted calculation allows us to find a constant $c(M_0)$ so that the 
all terms in our bound are absorbed into the leading term, 
which then takes the form \[c(M_0)\frac{\log((k-1)Q\log{x})}{\sqrt{\log x}},\] valid for \[\frac{\sqrt{\log{x}}}{\log((k-1)Q\log{x})} \geq \frac{2(M_0 + 7)}{\sqrt{\Cr{zfr5}}}.\] 
By Lemma \ref{triviality-bound}, Theorem \ref{main} will hold trivially for \[ 1 + \frac{1}{36260}\leq \frac{c(M_0)\log((k-1)Q\log{x})}{\sqrt{\log{x}}}.\] We obtain the desired value by setting these two quantities equal to each other and solving numerically for $M_0$ and $c(M_0)$, giving us the values $M_0=4$ and $c(M_0)\leq 58.084$, so that the result in Theorem \ref{main} holds for any $x \geq 3$.
\end{proof}
\begin{proof}[Proof of Theorem \ref{main2} and \ref{mainEC}] 
Write $\vartheta_{f,I}(x) =\mu_{\ST}(I)x + \varepsilon(x)$, where by the results of Theorem \ref{main} we may use the bound 
\begin{equation*}
    |\varepsilon(x)| \leq \Cr{c_{99}}x\frac{\log((k-1)Q\log{x})}{\sqrt{\log{x}}}. 
\end{equation*} By partial summation, we have 
\begin{align*}
    \pi_{f,I}(x) 
    & = \mu_{ST}(I) \left( \Li(x) + \frac{2}{\log 2} \right) + \frac{\varepsilon(x)}{\log x} + \int_2^x \frac{\varepsilon(u)}{u \log^2 u} du.
\end{align*}
For $x \geq 13$, we can bound the last term from the partial summation as
\begin{align} \label{eq:partial_summation_error2}
    \int_2^x \frac{\varepsilon(u)}{u \log^2 u} du &= \int_2^x \Cr{c_{99}}\, \frac{\log ((k-1)Q \log u)}{\sqrt{\log u}} \,\frac{1}{\log^2 u} \, du \nonumber \\
    & \leq \Cr{c_{99}} \frac{\log((k-1)Q \log 2)}{\sqrt{\log 2}} \left(\Li(x)-\frac{x}{\log x} + \frac{2}{\log 2}\right).
\end{align}

As stated, the result of Theorem \ref{main2} and \ref{mainEC} is trivially true for $x \leq 10^{100}$, as \[\frac{\log((k-1)Q \log{x})}{\sqrt{\log{x}}}> \frac{1}{58}\] in this range.
Assuming $x >10^{100}$, 
we may apply the bounds $\frac{x}{\log x} < \pi(x)$ for $x \geq 5393$ due to Dusart \cite{Du}, and  \[\left|\pi(x) - \Li(x)\right| \; \leq \;  0.2795 \frac{x}{(\log x)^{3/4}} \exp\left( -\sqrt{\frac{\log x}{6.455}} \right)\] for $x \geq 229$ due to Trudgian \cite{Tr2}. Using these results, we see that
\begin{align*}
    |\pi_{f, I}(x)-\mu_{\ST}(I) \pi(x)| & \leq |\pi_{f, I}(x) - \mu_{\ST}(I) \Li(x)| + |\Li(x) - \pi(x)| \\
    & \leq 1.000015\, \Cr{c_{99}} \pi(x)\frac{\log((k-1)Q\log{x})}{(\log{x})^{\frac{1}{2}}} ,
\end{align*}
where the last bound holds for $x \geq 10^{100}$.
\end{proof}

\section{An Explicit Zero-Free Region}\label{horizontalsection}

The primary purpose of this section is to prove an explicit zero-free region for the functions $L(s, \Sym^m f)$. 
As in \cite{T}, we do this by constructing several auxiliary products of $L$-functions with shifted arguments, such as 
\[
    \zeta(s)^3 \zeta(s + 2i \gamma)\zeta(s-2i\gamma)L(s + i\gamma,\Sym^m f)^2 L(s-i\gamma, \Sym^m f)^2 \prod_{j=1}^m L(s, \Sym^{2j} f).
\]
These auxiliary products will have all nonnegative Dirichlet coefficients.

Since we apply similar reasoning to each auxiliary product,
we find it convenient to encapsulate analytic information about $L$-functions, $L$-functions with shifted arguments, and products of $L$-functions with shifted arguments in a single structure. This will allow us to state the main result that we use to prove our explicit zero-free region in terms of the \textit{analytic conductor} of such functions, which we will define shortly.

With this in mind, let $m \geq 0$, and let $\mathcal{L}$ be the tuple consisting of the following data: \begin{itemize}
    \item[(i)] a positive integer $q_{\mathcal{L}}$;
    \item[(ii)] a complex number $\epsilon_{\mathcal{L}}$ with modulus $1$;
    \item[(iii)] a sequence $\kappa_0(\mathcal{L}), \dots, \kappa_m(\mathcal{L})$ of complex numbers with positive real part, up to reordering; and
    \item[(iv)] a sequence $\alpha_0(\cdot, \mathcal{L}), \dots, \alpha_n(\cdot, \mathcal{L})$ of functions from the primes to complex numbers with modulus at most $1$, up to reordering.
\end{itemize} For $\Re(s) > 1$, write \begin{align*}
    L(s, \mathcal{L})\! :=\! \prod_{p} \prod_{j=0}^m \frac{1}{1-\alpha_j(p, \mathcal{L})p^{-s}},\quad 
    \gamma(s, \mathcal{L})\! :=\! \prod_{j=0}^m \Gamma_\R\left(s+\kappa_j(\mathcal{L})\right), \quad 
    \Lambda(s, \mathcal{L}) := q_{\mathcal{L}}^{\frac{s}{2}}\gamma(s, \mathcal{L})L(s, \mathcal{L}).
\end{align*} Moreover, set $\overline{\mathcal{L}} := (q_{\mathcal{L}}, \overline{\epsilon}_{\mathcal{L}}, \{\overline{\kappa}_j(\mathcal{L})\}_0^m, \{\overline{\alpha}_j(\mathcal{L})\}_0^m)$, and suppose that $\Lambda(s, \mathcal{L})$ has a meromorphic analytic continuation satisfying \begin{align}\label{zfr functional eq}
    \Lambda(1-s, \mathcal{L}) &= \epsilon_{\mathcal{L}}\,\Lambda(s, \overline{\mathcal{L}})
\end{align} and admitting a product representation \begin{align*}
    \Lambda(s, \mathcal{L}) &= e^{A_{\mathcal{L}}+B_{\mathcal{L}}s}\left(\prod_\rho\left(1 - \frac{s}{\rho}\right)e^{\frac{s}{\rho}}\right)\left(\prod_\omega\frac{1}{s-\omega}\right),
\end{align*} where the zeros $\rho$ satisfy $0 < \Re{\rho} < 1$ and the poles $\omega$ satisfy $\Re{\omega} \in \{0, 1\}$. In such a case, we will call such an $\mathcal{L}$ an \textit{$L$-tuple of degree $m+1$}, and we will define $\Lambda_{\mathcal{L}}(n)$ by the property that \begin{align*}
    \sum_{n=1}^\infty \Lambda_{\mathcal{L}}(n)n^{-s} := -\frac{L'}{L}(s, \mathcal{L}).
\end{align*}

We start with the following lemma, which gives a bound for the real part of the logarithmic derivative of $\Gamma_\R(s)$.

\begin{lemma}
Let $s = \sigma+it$ with $\sigma \geq \frac{1}{2}$. Then we have \begin{align*}
    \Re\left(\frac{\Gamma_\R'}{\Gamma_\R}(s)\right) \leq -\frac{\Cr{zfr1}}{2} + \frac{1}{4}\log(\sigma^2+t^2),
\end{align*} where $\Cr{zfr1} := \log{\pi} + \gamma > 1.721$ and $\gamma$ denotes the Euler--Mascheroni constant.
\end{lemma}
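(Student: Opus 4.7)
The plan is to reduce the inequality, via the Weierstrass product for $\Gamma_\R$, to a clean sum-versus-integral comparison that follows from convexity together with the hypothesis $\sigma \geq 1/2$.

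First, I will use $\Gamma_\R(s) = \pi^{-s/2}\Gamma(s/2)$ and the Weierstrass product $\frac{1}{\Gamma(z)} = ze^{\gamma z}\prod_{n\geq 1}(1+z/n)e^{-z/n}$ at $z = s/2$ to derive the logarithmic-derivative formula
\[
    \frac{\Gamma_\R'}{\Gamma_\R}(s) = -\frac{\log\pi + \gamma}{2} - \frac{1}{s} + \sum_{n=1}^\infty\left(\frac{1}{2n} - \frac{1}{2n+s}\right).
\]
Taking real parts, the constant already contributes the desired $-\Cr{zfr1}/2$, the term $-\Re(1/s) = -\sigma/(\sigma^2+t^2)$ is non-positive, and each summand simplifies to the non-negative real-valued function
\[
    f(n) := \Re\left(\frac{1}{2n} - \frac{1}{2n+s}\right) = \frac{\sigma(2n+\sigma) + t^2}{2n\bigl((2n+\sigma)^2 + t^2\bigr)}.
\]
Thus it suffices to bound $\sum_{n\geq 1} f(n) - \frac{\sigma}{\sigma^2+t^2}$ by $\frac{1}{4}\log(\sigma^2+t^2)$.

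Next, I would verify that $f(x) := \frac{1}{2x} - \Re\frac{1}{2x+s}$, now viewed as a function of the continuous variable $x > 0$, is convex. A direct differentiation gives
\[
    f''(x) = \frac{1}{x^3} - 8\,\Re\frac{1}{(2x+s)^3},
\]
and the elementary bound $|2x+s| \geq 2x + \sigma \geq 2x$ yields $8\,|\Re(2x+s)^{-3}| \leq 8|2x+s|^{-3} \leq x^{-3}$, so $f''(x) \geq 0$. Jensen's inequality applied on each interval $[n-\tfrac12,n+\tfrac12]$ then gives $f(n) \leq \int_{n-1/2}^{n+1/2} f(x)\,dx$, and summing over $n\geq 1$ produces
\[
    \sum_{n=1}^\infty f(n) \leq \int_{1/2}^\infty f(x)\,dx = \frac{1}{4}\log\bigl((1+\sigma)^2 + t^2\bigr),
\]
where the integral is evaluated using the antiderivative $\tfrac{1}{2}\log|2x+s|$ of $\Re(2x+s)^{-1}$.

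Combining the two steps, the lemma reduces to verifying
\[
    \frac{1}{4}\log\left(1 + \frac{2\sigma+1}{\sigma^2+t^2}\right) \leq \frac{\sigma}{\sigma^2+t^2}
\]
under $\sigma \geq 1/2$. The inequality $\log(1+y) \leq y$ controls the left side by $\frac{2\sigma+1}{4(\sigma^2+t^2)}$, and this is at most $\frac{\sigma}{\sigma^2+t^2}$ precisely when $2\sigma+1 \leq 4\sigma$, i.e.\ when $\sigma \geq 1/2$. The only delicate point is the convexity check, but it follows cleanly from $|2x+s| \geq 2x$; everything else is routine integration and an elementary inequality, and the hypothesis $\sigma \geq 1/2$ enters only through the final absorption step.
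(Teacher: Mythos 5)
Your proof is correct and follows essentially the same route as the paper's: the digamma partial-fraction expansion, convexity of the summand to replace $\sum_{n\geq1}$ by $\int_{1/2}^\infty$, exact evaluation of that integral as $\tfrac14\log((1+\sigma)^2+t^2)$, and absorption of the remaining $-\sigma/(\sigma^2+t^2)$ via $\log(1+y)\leq y$ together with $\sigma\geq\tfrac12$. The only difference is that you actually verify the convexity claim (via $f''(x)=x^{-3}-8\Re(2x+s)^{-3}\geq 0$ from $|2x+s|\geq 2x$), which the paper asserts without proof.
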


\begin{proof}
Start by using the identity $\frac{\Gamma'}{\Gamma}(z) = -\gamma - \frac{1}{z} + \sum_{j=1}^\infty \frac{z}{j(j+z)},$ valid for $z \notin \Z_{\leq 0}$, to write \begin{align*}
    \Re\left(\frac{\Gamma_\R'}{\Gamma_\R}(s)\right) &= -\frac{\gamma}{2}-\frac{\log{\pi}}{2}-\frac{\sigma}{\sigma^2+t^2} + \sum_{j=1}^\infty \frac{\sigma(2j+\sigma)+t^2}{2j((2j+\sigma)^2+t^2)}.
\end{align*} Since the summand on the right is convex for $j \in \R_{\geq 0}$, we find that \begin{align*}
    \Re\left(\frac{\Gamma_\R'}{\Gamma_\R}(s)\right) &\leq -\frac{\gamma+\log{\pi}}{2}-\frac{\sigma}{\sigma^2+t^2} + \int_{\frac{1}{2}}^\infty \frac{\sigma(2x+\sigma)+t^2}{2x((2x+\sigma)^2+t^2)}\,dx \\
    &\leq -\frac{\gamma+\log{\pi}}{2} + \frac{1}{4}\log(\sigma^2+t^2). \qedhere
\end{align*}
\end{proof}

Define the \textit{analytic conductor} of an $L$-tuple $\mathcal{L}$ of degree $m+1$ by \begin{align*}
    C(\mathcal{L}) &:= q_{\mathcal{L}}\prod_{j=0}^m|1+\kappa_j(\mathcal{L})|.
\end{align*} 
The following theorem gives a zero-free region for certain types of $L$-functions in terms of the analytic conductor of an auxiliary $L$-tuple.

\begin{theorem} \label{general-zfr-theorem}
Let $\mathcal{L}$ be an $L$-tuple of degree $m+1$ such that $\overline{\mathcal{L}} = \mathcal{L}$ (allowing for a potential re-ordering of the coefficients $\alpha_j, \kappa_j$) and $\Lambda_{\mathcal{L}}(n)$ is non-negative for all $n \geq 1$, and let $t > 0$ and $0 \leq \delta < \frac{1}{2}$. Moreover, suppose that \begin{itemize}
    \item[(i)] the function $L(s, \mathcal{L})$ is entire except for possible poles of order at most $a_0$ at $s = 1$ and at most $a_1$ at both $s = 1 \pm 2it$, and
    \item[(ii)] the function $L(s, \mathcal{L})$ has a zero of order at least $b_0$ at $s = 1-\delta$ and zeros of order at least $b_1$ at both $s = 1-\delta \pm 2it$,
\end{itemize} where $b_0 \geq a_0 > 0$, $b_1 \geq a_1 \geq 0$, and $\frac{b_1}{b_0} \geq \frac{a_1}{a_0}$. Then we have 
\begin{align*}
    \delta \geq \frac{2(\sqrt{b_0} - \sqrt{a_0})^2}{\log{C(\mathcal{L})} + 2\Cr{zfr1}^{-1}\sqrt{a_0}(\sqrt{b_0}-\sqrt{a_0})}.
\end{align*}
\end{theorem}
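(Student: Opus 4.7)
My plan is a de la Vallée Poussin-style positivity argument. For real $\sigma > 1$, the hypothesis $\Lambda_{\mathcal{L}}(n) \geq 0$ yields
\[
0 \;\leq\; -\tfrac{L'}{L}(\sigma, \mathcal{L}) \;=\; \sum_{n \geq 1} \tfrac{\Lambda_{\mathcal{L}}(n)}{n^\sigma}.
\]
Taking the logarithmic derivative of the Hadamard product for $\Lambda(s, \mathcal{L})$ and exploiting \eqref{zfr functional eq} with $\overline{\mathcal{L}} = \mathcal{L}$ to pair zeros via $\rho \mapsto 1-\overline{\rho}$ (and poles similarly) yields $\Re B_{\mathcal{L}} = -\sum_\rho \Re(1/\rho)$, producing the master identity
\[
-\tfrac{L'}{L}(\sigma, \mathcal{L}) \;=\; \tfrac{1}{2}\log q_{\mathcal{L}} + \Re\tfrac{\gamma'}{\gamma}(\sigma, \mathcal{L}) - \sum_\rho \Re\tfrac{1}{\sigma-\rho} + \sum_\omega \Re\tfrac{1}{\sigma-\omega}.
\]

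I would then specialize to $\sigma = 1 + \eta$ with $\eta > 0$ to be optimized. Dropping all but the prescribed zeros (each contribution being positive) gives
\[
\sum_\rho \Re\tfrac{1}{\sigma-\rho} \;\geq\; \tfrac{b_0}{\eta+\delta} + \tfrac{2 b_1 (\eta+\delta)}{(\eta+\delta)^2 + 4t^2},
\]
while the functional equation and the pole hypothesis on $L(s, \mathcal{L})$ force $\omega \in \{0, 1, \pm 2it, 1 \pm 2it\}$ with orders at most $a_0$ at $s \in \{0, 1\}$ and $a_1$ at $s \in \{\pm 2it, 1 \pm 2it\}$, so
\[
\sum_\omega \Re\tfrac{1}{\sigma-\omega} \;\leq\; \tfrac{a_0}{\eta} + \tfrac{a_0}{1+\eta} + \tfrac{2 a_1 \eta}{\eta^2 + 4t^2} + \tfrac{2 a_1 (1+\eta)}{(1+\eta)^2 + 4t^2}.
\]
Applying the preceding lemma to each of the $m+1$ gamma factors, together with the elementary estimates $(1+\eta+\Re\kappa_j)^2 + (\Im\kappa_j)^2 \leq (|1+\kappa_j|+\eta)^2$ and $\log(|1+\kappa_j|+\eta) \leq \log|1+\kappa_j| + \eta$ (valid since $|1+\kappa_j| \geq 1$), one obtains
\[
\tfrac{1}{2}\log q_{\mathcal{L}} + \Re\tfrac{\gamma'}{\gamma}(1+\eta, \mathcal{L}) \;\leq\; \tfrac{1}{2}\log C(\mathcal{L}) - \tfrac{(m+1)\Cr{zfr1}}{2} + \tfrac{(m+1)\eta}{2}.
\]

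The decisive step is to use $b_1/b_0 \geq a_1/a_0$ to absorb the $s = 1 \pm 2it$ pole contributions into the $s = 1 - \delta \pm 2it$ zero contribution. Setting $r := b_0/a_0$ and $g(x) := x/(x^2 + 4t^2)$, I would verify that $r \cdot g(\eta+\delta) \geq g(\eta)$ at the minimizer $\eta_* := \delta\sqrt{a_0}/(\sqrt{b_0}-\sqrt{a_0})$ of $\eta \mapsto \tfrac{b_0}{\eta+\delta} - \tfrac{a_0}{\eta}$, which is a direct computation. Then $b_1 \geq r a_1$ gives $b_1\, g(\eta_*+\delta) \geq a_1\, g(\eta_*)$, and the offending $\pm 2it$ terms drop out. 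Evaluating the master inequality at $\eta = \eta_*$, the left-hand side becomes $(\sqrt{b_0}-\sqrt{a_0})^2/\delta$; absorbing the residual terms ($\tfrac{(m+1)\eta_*}{2}$, the bounded $a_0/(1+\eta_*)$, and $\tfrac{2a_1(1+\eta_*)}{(1+\eta_*)^2+4t^2}$) using the negative slack $-\tfrac{(m+1)\Cr{zfr1}}{2}$ packages everything into the extra summand $\Cr{zfr1}^{-1}\sqrt{a_0}(\sqrt{b_0}-\sqrt{a_0})$, yielding
\[
\tfrac{(\sqrt{b_0}-\sqrt{a_0})^2}{\delta} \;\leq\; \tfrac{1}{2}\log C(\mathcal{L}) + \Cr{zfr1}^{-1}\sqrt{a_0}(\sqrt{b_0}-\sqrt{a_0}).
\]
Solving for $\delta$ gives the theorem. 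The main obstacle is this final bookkeeping, particularly the uniform-in-$t$ absorption of the $\pm 2it$ contributions, since $g(x)$ peaks at $x = 2t$ so the ratio $g(\eta_*+\delta)/g(\eta_*)$ is most delicate when $2t$ is comparable to $\eta_*$ and $\delta$.
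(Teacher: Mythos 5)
Your proposal captures the skeleton of the paper's argument correctly: the same master identity from the Hadamard product with $\Re B_{\mathcal{L}} = -\sum_\rho \Re(1/\rho)$, the same gamma-factor bound, and the same decisive use of $b_1/b_0 \geq a_1/a_0$ via the observation that $r\,g(\eta_*+\delta) \geq g(\eta_*)$ at $r=b_0/a_0$ (your computation $(r^{3/2}-r)\eta_*^2 + (r^{3/2}-1)4t^2 \geq 0$ is exactly right). However, your final bookkeeping has two genuine gaps that the paper avoids by a slightly different bookkeeping scheme.

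First, you discard the reflected zeros at $s = \delta$ and $s = \delta \pm 2it$ while keeping the reflected poles at $s = 0$ and $s = \pm 2it$, leaving you to absorb $a_0/(1+\eta_*)$ and $2a_1(1+\eta_*)/((1+\eta_*)^2+4t^2)$ into the slack $-(m+1)\Cr{zfr1}/2$. No relation between $m+1$ and $a_0, a_1$ is assumed in the theorem, and the target budget $\Cr{zfr1}^{-1}\sqrt{a_0}(\sqrt{b_0}-\sqrt{a_0})$ is typically smaller than $a_0$ alone, so this absorption cannot be carried out in general. The paper instead retains the reflected zeros and groups them with the reflected poles (their inequality \eqref{eq:main_zfr_ineq}, justified because $(\varepsilon+1)(\varepsilon+\delta) \geq \varepsilon(\varepsilon+1-\delta)$ when $\delta > 0$), so those reflected contributions cancel with no budget needed.

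Second, taking $\eta_* = \delta\sqrt{a_0}/(\sqrt{b_0}-\sqrt{a_0})$ makes the $(m+1)\eta_*/2$ term $\delta$-dependent. With only $\delta < 1/2$ available, $\eta_*$ can exceed $\Cr{zfr1}$ (e.g.\ $a_0 = 3$, $b_0 = 4$, $\delta$ near $1/2$ gives $\eta_* \approx 3.2$), so the $(m+1)\eta_*/2$ term overwhelms the $-(m+1)\Cr{zfr1}/2$ slack rather than being absorbed by it. Solving the resulting $\delta$-quadratic does not recover the stated bound. The paper avoids this circularity by choosing $\varepsilon = 2\sqrt{a_0}(\sqrt{b_0}-\sqrt{a_0})/d$ with $d := \log C(\mathcal{L}) + 2\Cr{zfr1}^{-1}\sqrt{a_0}(\sqrt{b_0}-\sqrt{a_0})$, which is independent of $\delta$ and satisfies $\varepsilon \leq \Cr{zfr1}$ automatically, and then deriving a contradiction from the assumption $\delta < 2(\sqrt{b_0}-\sqrt{a_0})^2/d$. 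You should restructure along these lines: keep the reflected zeros, fix $\varepsilon$ by $d$ rather than by $\delta$, and argue by contradiction. (A minor slip: your $\eta_*$ is the maximizer, not minimizer, of $\eta \mapsto b_0/(\eta+\delta) - a_0/\eta$; this does not affect the choice but does affect the justification you'd write.)
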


\begin{proof}
Let $\varepsilon > 0$. Taking the logarithmic derivatives of the two formulas we have for $\Lambda(s, \mathcal{L})$ at $s=1+\varepsilon$ yields 
\begin{align*}
    &\sum_\rho \Re\left(\frac{1}{1+\varepsilon-\rho}\right) - \sum_\omega \Re\left(\frac{1}{1+\varepsilon-\omega}\right) \\
    &\quad = \frac{1}{2}\log(q_{\mathcal{L}}) + \sum_{j=0}^m \frac{\Gamma_\R'}{\Gamma_\R}(\varepsilon+1+\kappa_j) - \sum_p \sum_{\ell \geq 1} \Lambda_{\mathcal{L}}(p)p^{-\ell s} \\
    & \quad \leq \frac{1}{2}\left(-(m+1)\Cr{zfr1} + \log{C(\mathcal{L})} + \frac{1}{2}\sum_{j=0}^m\log\left(1 + \frac{2\Re(1+\kappa_j)\varepsilon+\varepsilon^2}{|1+\kappa_j|^2}\right)\right) \\
    & \quad \leq \frac{1}{2}(-(m+1)\Cr{zfr1} + \log{C(\mathcal{L})} + (m+1)\varepsilon),
\end{align*} where we use that $\Re(B) =- \sum_\rho \frac{1}{\rho}$ as $\mathcal{L}$ satisfies the functional equation \eqref{zfr functional eq}, following the argument from \cite[Proposition 5.7]{IK}. In the context of this theorem, we obtain \begin{align*}
    &\frac{2b_0}{\varepsilon+\delta} + \frac{4b_1(\varepsilon+\delta)}{(\varepsilon+\delta)^2+4t^2} + \frac{2b_0}{\varepsilon+1-\delta} + \frac{4b_1(\varepsilon+1-\delta)}{(\varepsilon+1-\delta)^2+4t^2} \\
    &\quad \leq \frac{2a_0}{\varepsilon} + \frac{4a_1\varepsilon}{\varepsilon^2+4t^2} + \frac{2a_0}{\varepsilon+1} + \frac{4a_1(\varepsilon+1)}{(\varepsilon+1)^2+4t^2} + \log{C(\mathcal{L})} + (m+1)(\varepsilon - \Cr{zfr1}),
\end{align*} 
from which we see that \begin{align} \label{eq:main_zfr_ineq}
    0 &\leq \frac{2\varepsilon+1}{\varepsilon+1}\left(\frac{2a_0}{\varepsilon}-\frac{2b_0}{\varepsilon+\delta}\right) + \frac{(2\varepsilon+1)((\varepsilon+\frac{1}{2})^2+4t^2-\frac{1}{4})}{(\varepsilon+1)^2+4t^2}\left(\frac{4a_1}{\varepsilon^2+4t^2}-\frac{4b_1}{(\varepsilon+\delta)^2+4t^2}\right) \nonumber \\
    &\quad \quad + \log(C(\mathcal{L})) + (m+1) (\varepsilon-\Cr{zfr1}).
\end{align} 
Now set $d := \log(C(\mathcal{L}))+2\Cr{zfr1}^{-1}\sqrt{a_0}(\sqrt{b_0}-\sqrt{a_0})$, choose $\varepsilon = \frac{2}{d}\sqrt{a_0}(\sqrt{b_0}-\sqrt{a_0})$, and suppose that $\delta < \frac{2}{d}(\sqrt{b_0}-\sqrt{a_0})^2$. Then we have 
\begin{align}\label{eq:main_zfr_ineq_term1}
    \frac{2a_0}{\varepsilon}-\frac{2b_0}{\varepsilon+\delta} < -d
\end{align} and 
\begin{align} \label{eq:main_zfr_ineq_term2}
    \frac{4a_1}{\varepsilon^2+4t^2}-\frac{4b_1}{(\varepsilon+\delta)^2+4t^2} &< \frac{b_1}{b_0}\left(\frac{1}{\frac{(\sqrt{b_0}-\sqrt{a_0})^2}{d^2}+\frac{t^2}{a_0}} - \frac{1}{\frac{(\sqrt{b_0}-\sqrt{a_0})^2}{d^2}+\frac{t^2}{b_0}}\right)\leq 0,
\end{align} using the fact that $\frac{b_1}{b_0} \geq \frac{a_1}{a_0}$. Hence, using \eqref{eq:main_zfr_ineq_term1} and \eqref{eq:main_zfr_ineq_term2} in \eqref{eq:main_zfr_ineq}, we find that
\begin{align*}
    0 < -d + \log{C(\mathcal{L})} + (m+1)\left(\varepsilon-\Cr{zfr1}\right) &\leq -2\Cr{zfr1}^{-1}\sqrt{a_0}(\sqrt{b_0}-\sqrt{a_0}),
\end{align*} a contradiction.
\end{proof}

To apply the above theorem to symmetric power $L$-functions, we start by defining $\mathcal{L}_{m}(f)$ as the $L$-tuple corresponding to the $m$-th symmetric power $L$-function. In addition, for $t \in \R$, we define $\mathcal{L}_m(f,t)$ as the $L$-tuple corresponding to $L(s+it, \Sym^m f)$. We start by bounding the analytic conductor of $\mathcal{L}_m(f,t)$.

\begin{lemma} \label{lemma:log_conductor_sym_power_L}
For $m \geq -1$, setting $\Cr{zfr2} := 6\log{2}-1 > 3.158$, we have \begin{align*}
    \log{C(\mathcal{L}_m(f,t))} &\leq \log q_{\Sym^m f} + (m+1)\log\left(\frac{1}{2e}(k-1)\sqrt{1+t^2}\right)  + (m+4)\log(m+4) - \Cr{zfr2}.
\end{align*} 
\end{lemma}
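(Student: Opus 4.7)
The plan is to expand the gamma factor of $L(s+it,\Sym^m f)$ explicitly in terms of $\Gamma_{\mathbb R}$ factors, read off the shape parameters $\kappa_j(\mathcal L_m(f,t))$, and then bound the logarithm of the analytic conductor term by term.

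First, I would work out the $\kappa_j$. Since $\mathcal L_m(f,t)$ corresponds to $L(s+it,\Sym^m f)$, a shift by $it$ sends $\kappa_j \mapsto \kappa_j + it$ in the local parameters. Applying the factorization $\Gamma_{\mathbb C}(s+a)=\Gamma_{\mathbb R}(s+a)\Gamma_{\mathbb R}(s+a+1)$ to formula \eqref{gammasconj11} gives, for odd $m$, the parameter list
\[
\{(j-\tfrac12)(k-1)+it,\ (j-\tfrac12)(k-1)+1+it\}_{j=1}^{(m+1)/2},
\]
and for even $m$ the parameter list
\[
\{r+it\}\cup\{j(k-1)+it,\ j(k-1)+1+it\}_{j=1}^{m/2}.
\]
Since the shift leaves the conductor unchanged, $q_{\mathcal L_m(f,t)}=q_{\Sym^m f}$, so the claim reduces to bounding $\sum_{j=0}^m \log|1+\kappa_j+it|$.

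Next I would peel off the $t$-dependence. Because every $\kappa_j$ is a nonnegative real number, we have $|1+\kappa_j+it|^2=(1+\kappa_j)^2+t^2\le(1+\kappa_j)^2(1+t^2)$, giving
\[
\sum_{j=0}^m\log|1+\kappa_j+it|\ \le\ \sum_{j=0}^m\log(1+\kappa_j)\ +\ \tfrac{m+1}{2}\log(1+t^2).
\]
This last term matches the $\log\sqrt{1+t^2}$ portion of the claimed bound exactly. It therefore suffices to prove
\[
\sum_{j=0}^m\log(1+\kappa_j)\ \le\ (m+1)\log\!\Big(\tfrac{k-1}{2e}\Big)+(m+4)\log(m+4)-\Cr{zfr2}.
\]

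For this purely combinatorial step I would use the elementary bounds $1+(j-\tfrac12)(k-1)\le (k-1)(j+1)$ and $2+(j-\tfrac12)(k-1)\le(k-1)(j+2)$ (both valid for $k\ge2$), and the analogous bounds $1+j(k-1)\le(k-1)(j+1)$ and $2+j(k-1)\le(k-1)(j+2)$ in the even case (the stray factor $1+r$ is handled by $\log(1+r)\le\log 2$). Summing these over $j$ factors out exactly $(m+1)\log(k-1)$ (after absorbing $\log 2$ in the even case), leaving a residual of the form $\log\!\bigl(A!\,B!\bigr)$ where $A,B$ are roughly $\tfrac{m}{2}$. A Stirling-type estimate $\log A! + \log B!\le (A+B)\log\max(A,B)-(A+B)+O(\log m)$ then yields a leading term of the shape $(m+4)\log(m+4)-(m+4)$, which must be compared against $(m+4)\log(m+4)-(m+1)(\log 2+1)-\Cr{zfr2}$.

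The main obstacle is purely numerical: verifying that the chosen constant $\Cr{zfr2}=6\log 2-1$ actually suffices uniformly in $m\ge-1$. The case $m=0$ (where the bound should be tight, since $\kappa_0=0$ forces the left side to vanish while the right side equals $\log(k-1)-\log(2e)+4\log 4-\Cr{zfr2}$) pins down $\Cr{zfr2}$ to its stated value, so I would verify the base case $m=0$ by direct computation and confirm the inequality for $m\ge 1$ by checking that the excess $(m+4)\log(m+4)-\log(A!B!)-(m+1)(\log 2+1)$ is nondecreasing in $m$, which follows from comparing consecutive differences against $\log 2+1$. The boundary case $m=-1$ is trivial because $L(s,\Sym^{-1}f)=1$ has empty parameter data.
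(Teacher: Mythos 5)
Your overall strategy mirrors the paper's: identify the shape parameters $\kappa_j$, peel off the $t$-dependence via $|1+\kappa_j+it|^2 \le (1+\kappa_j)^2(1+t^2)$ (this step is verbatim the same), factor out the $(k-1)$-dependence, and bound the residual sum. The paper bounds the residual $\sum_j \log(1+\kappa_j)$ by integral comparison, writing it (after pulling out $(m+\delta_2(m))\log(k-1)$) as at most $2\int_2^{\frac{m+\delta_2(m)}{2}+2}\log\left(x-\tfrac{\delta_2(m)}{2}\right)\,dx$, which evaluates in closed form and produces the stated constant cleanly, with no Stirling-type correction to track. Your Stirling-plus-factorial route is a legitimate alternative in spirit, but the concrete numerical closure you propose does not work.

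The flaw is in the final step. You claim the excess $(m+4)\log(m+4)-\log(A!B!)-(m+1)(\log 2+1)$ is nondecreasing in $m$ and pinned down at $m=0$. Neither is true. At $m=0$ (so $\kappa_0=r=0$), the left side of the target inequality is $0$ while the right side equals $\log(2(k-1))\geq\log 2\approx 0.693$, so $m=0$ is slack, not tight. The genuinely tight case is around $m=2$, $k=2$: there $\kappa_0=\kappa_1=1$, $\kappa_2=2$, so the left side is $2\log 2+\log 3\approx 2.485$ and the right side is $-3\log 2+6\log 3-2\approx 2.513$, an excess of only about $0.028$. Checking $m=0,1,2,3$ with $k=2$ gives excesses of roughly $0.693$, $0.180$, $0.028$, $0.199$: the sequence decreases, bottoms out, then increases. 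So the "nondecreasing" argument fails outright, and you cannot bootstrap from the base case by comparing consecutive differences. You would need to locate the global minimum (near $m=2$, $k=2$) by explicit checking of several small cases together with an eventual-monotonicity estimate for larger $m$, and your Stirling error terms would then have to fit inside a margin of only a few hundredths. The paper's integral bound sidesteps this entirely because it is exact enough that the inequality reduces to $\log(k-1)\geq 0$ (even $m$) or $3\log 3 - \Cr{zfr2}\geq 0$ (odd $m$).

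A minor additional imprecision: you say summing the elementary bounds "factors out exactly $(m+1)\log(k-1)$," but for even $m$ only $m$ copies of $\log(k-1)$ arise from the $\Gamma_{\mathbb{C}}$ factors; the extra $\log(k-1)$ in the target comes from the slack $\log(k-1)\geq 0$, not from the $\Gamma_{\mathbb{R}}(s+r)$ term, which contributes only $\log(1+r)\le\log 2$. The paper's intermediate line $(m+\delta_2(m))\log(k-1)$ reflects this correctly.
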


\begin{proof}
Setting $\mathcal{L} := \mathcal{L}_m(f)$ for convenience, we see that \begin{align*}
    \log{C(\mathcal{L}_m(f,t))} - \log{q_{\mathcal{L}}} 
    & \leq \frac{1}{2}\sum_{j=0}^m \log\left((1+\kappa_j(\mathcal{L}))^2(1+t^2)\right)\\ &= \frac{1}{2}(m+1)\log(1+t^2) + \sum_{j=0}^m \log(1+\kappa_j(\mathcal{L})).
\end{align*}  Define $\delta_2(m)$ to be $1$ for $m$ odd and 0 otherwise. Then we can write
\begin{align*}
    \sum_{j=0}^m \log(1+\kappa_j(\mathcal{L})) &= (1-\delta_2(m)) \log (r + 1) + \!\!\!\! \sum_{j=1}^{\frac{1}{2}(m+\delta_2(m))} \!\!\!\! \log\left( \left(j-{\textstyle\frac{\delta_2(m)}{2}}\right)(k-1)+1\right) \\
    & \ \ \ \   + \!\!\!\! \sum_{j=1}^{\frac{1}{2}(m+\delta_2(m))} \log\left(\left(j-{\textstyle \frac{\delta_2(m)}{2}}\right)(k-1)+2\right) \\ 
    & \leq (1 - \delta_2(m)) \log 2 + (m+\delta_2(m))\log(k-1) + 2\int_2^{\frac{m+\delta_2(m)}{2}+2}\!\!\!\!\!\!\!\!\log\left(x -{\textstyle \frac{\delta_2(m)}{2}}\right)\,dx \\
    & \leq (m+1) \log \left( {\textstyle \frac{k-1}{2e}} \right) + (m+4) \log (m+4) - (6\log{2} - 1). \qedhere
\end{align*}
\end{proof}

For $t > 0$, write $\mathcal{L}_{m \times m}$ for the $L$-tuple corresponding to \begin{align*}
    \prod_{j=1}^m L(s, \Sym^{2j} f) = \zeta(s)^{-1} L(s, \Sym^m f \times \Sym^m f),
\end{align*} where $L(s, \Sym^m f\, \times\, \Sym^m f)$ is the Rankin--Selberg $L$-function associated to $\Sym^m \pi_f \,\otimes \,\Sym^m \pi_f$. The following lemma bounds the analytic conductor of $\mathcal{L}_{m \times m}$.

\begin{lemma} \label{lemma:conductor_L_mxm}
Let $m \geq 0$, and suppose that there exists $Q \geq 1$ such that for all $1 \leq n \leq 2m$, we have $q_{\Sym^n f} \leq Q^{n+1}$. Then we have \begin{align*}
    \log{C(\mathcal{L}_{m \times m})} &\leq (m^2+2m)\log\left((k-1)Q\right) + (m+3)^2\log\left(\Cr{zfr3}(m+3)\right) - \Cr{zfr4},
\end{align*} where $\Cr{zfr3} := \frac{2}{\sqrt{e}} < 1.214$ and $\Cr{zfr4} := 9\log(6) - \frac{9}{2} > 11.62$.
\end{lemma}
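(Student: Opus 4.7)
\emph{Proof plan.} Since $L(s, \mathcal{L}_{m\times m}) = \prod_{j=1}^m L(s, \Sym^{2j}f)$, the analytic conductor factors multiplicatively across the constituent $L$-functions:
\begin{align*}
\log C(\mathcal{L}_{m\times m}) = \sum_{j=1}^m \log C(\mathcal{L}_{2j}(f, 0)).
\end{align*}
The strategy is to apply the single-symmetric-power estimate of Lemma \ref{lemma:log_conductor_sym_power_L} at $t = 0$ to each summand, together with the hypothesis $q_{\Sym^{2j}f} \leq Q^{2j+1}$, obtaining
\begin{align*}
\log C(\mathcal{L}_{2j}(f,0)) \leq (2j+1)\log Q + (2j+1)\log\bigl(\tfrac{k-1}{2e}\bigr) + (2j+4)\log(2j+4) - \Cr{zfr2}.
\end{align*}

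Summing over $j = 1, \ldots, m$ and using $\sum_{j=1}^m(2j+1) = m^2+2m$, the only nontrivial step is to control $\sum_{j=1}^m(2j+4)\log(2j+4)$. The substitution $\ell = j+2$ rewrites this as $2\log 2 \sum_{\ell=3}^{m+2}\ell + 2\sum_{\ell=3}^{m+2}\ell\log\ell$; the first sum equals $(m+2)(m+3) - 6$ by the arithmetic progression formula, while the second is controlled via the integral comparison
\begin{align*}
\sum_{\ell=3}^{m+2}\ell\log\ell \leq \int_3^{m+3} x\log x\,dx = \tfrac{(m+3)^2}{2}\log(m+3) - \tfrac{(m+3)^2}{4} - \tfrac{9}{2}\log 3 + \tfrac{9}{4},
\end{align*}
which is valid since $x\log x$ is increasing on $[3,\infty)$.

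Combining everything and using $\log\bigl(\tfrac{k-1}{2e}\bigr) = \log(k-1) - \log 2 - 1$ and $\Cr{zfr2} = 6\log 2 - 1$, the coefficients of $\log 2$ from the four distinct sources cancel cleanly down to $-3m\log 2$, yielding
\begin{align*}
\log C(\mathcal{L}_{m\times m}) \leq (m^2+2m)\log((k-1)Q) + (m+3)^2\log(m+3) - \tfrac{(m+3)^2}{2} - 3m\log 2 - (m^2+m) - 9\log 3 + \tfrac{9}{2}.
\end{align*}
The target bound unpacks as $(m^2+2m)\log((k-1)Q) + (m+3)^2\log(m+3) + (m+3)^2(\log 2 - \tfrac12) - 9\log 6 + \tfrac{9}{2}$, so after using $9\log 3 - 9\log 6 = -9\log 2$, the difference (target minus our bound) simplifies exactly to
\begin{align*}
\log 2 \bigl[(m+3)^2 + 3m - 9\bigr] + m^2 + m = m(m+9)\log 2 + m(m+1),
\end{align*}
which is nonnegative for all $m \geq 0$, with equality precisely at $m = 0$. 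The main obstacle is this final bookkeeping: the constants $\Cr{zfr3} = 2/\sqrt{e}$ and $\Cr{zfr4} = 9\log 6 - \tfrac{9}{2}$ are extremally calibrated so that every stray $\log 2$, $\log 3$, and $\log 6$ cancels and the residual collapses to a manifestly nonnegative quadratic in $m$, leaving no slack at the boundary case $m=0$ (where $\mathcal{L}_{0\times 0}$ is the trivial $L$-tuple and both sides vanish).
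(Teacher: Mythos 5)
Your proof is correct and follows the paper's route: factor $\log C(\mathcal{L}_{m\times m}) = \sum_{j=1}^m \log C(\mathcal{L}_{2j}(f,0))$, apply Lemma \ref{lemma:log_conductor_sym_power_L} at $t=0$ with $q_{\Sym^{2j}f}\leq Q^{2j+1}$, and control $\sum_{j=1}^m(2j+4)\log(2j+4)$ by an integral comparison (the paper uses $\int_1^{m+1}(2x+4)\log(2x+4)\,dx$ directly, whereas you evaluate the arithmetic-progression part exactly and integrate only $\sum\ell\log\ell$, giving the marginally tighter slack $m(m+9)\log 2 + m(m+1)$ in place of the paper's $m(m+8)\log 2 + m(m+1)$). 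One small slip in the prose: $\sum_{\ell=3}^{m+2}\ell = \tfrac{(m+2)(m+3)}{2}-3 = \tfrac{m^2+5m}{2}$, not $(m+2)(m+3)-6$; your subsequent arithmetic implicitly uses the correct value, as the $-3m\log 2$ coefficient and final residual confirm.
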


\begin{proof}
Since the analytic conductor is multiplicative, we can use Lemma \ref{lemma:log_conductor_sym_power_L} to write \begin{align*}
    \log{C(\mathcal{L}_{m \times m})} = \sum_{j=1}^m \log{C(\mathcal{L}_{2j}(f))} 
    & \leq (m^2+2m)\log{Q} + (m^2+2m)\log\left(\frac{1}{2e}(k-1)\right) \\
    &\quad + \int_1^{m+1} (2x+4)\log(2x+4)dx - m\Cr{zfr2},
\end{align*}
from which the stated result follows.
\end{proof}

\begin{theorem} \label{theorem:zfr}
Let $\beta+i\gamma$ be a zero of $L_m(s, \mathcal{L}_m(f)) = L(s, \Sym^m f)$ with $\beta > \frac{1}{2}$ and $m \geq 1$, and suppose that $q_{\Sym^n(f)} \leq Q^{n+1}$ for all $1 \leq n \leq 2m+1$. Then we have 
\begin{align*}
    1 - \beta \geq \frac{\Cr{zfr5}}{(m+7)^2\log\left(\Cr{zfr3}(k-1)Q(m+7)\sqrt{1+\gamma^2}\right)},
\end{align*} where $\Cr{zfr5} := 2(2-\sqrt{3})^2 > 0.1435$.

\end{theorem}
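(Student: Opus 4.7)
The plan is to apply Theorem \ref{general-zfr-theorem} to the classical auxiliary product
\[
F(s) = \zeta(s)^3 \zeta(s+2i\gamma)\zeta(s-2i\gamma) L(s+i\gamma, \Sym^m f)^2 L(s-i\gamma, \Sym^m f)^2 \prod_{j=1}^m L(s, \Sym^{2j} f),
\]
viewed as $L(s, \mathcal{L})$ for a single $L$-tuple $\mathcal{L}$ obtained by concatenating the data of the $L$-tuples of each factor. This reduces the problem to three tasks: (a) checking that $-F'/F$ has nonnegative Dirichlet coefficients; (b) locating the relevant poles of $F$ at $s = 1$ and $s = 1 \pm 2i\gamma$ and the relevant zeros at $s = \beta$ and $s = \beta \pm 2i\gamma$; and (c) bounding $\log C(\mathcal{L})$ in the right shape.

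For (a), at an unramified prime $p$ the $p^\ell$-coefficient of $-F'/F$ simplifies, via $2 + 2\cos(2x) = 4\cos^2 x$ and the identity $\sum_{j=0}^m U_{2j}(\cos\theta) = U_m(\cos\theta)^2$, to the square $\log p \cdot \bigl( 2\cos(\gamma\ell\log p) + U_m(\cos(\ell\theta_p)) \bigr)^2 \geq 0$. At ramified primes, nonnegativity follows from the standard Rankin--Selberg positivity of the coefficients of $-(L'/L)(s, \Sym^m f \times \Sym^m f)$, combined with the identity $\prod_{j=1}^m L(s, \Sym^{2j} f) = \zeta(s)^{-1} L(s, \Sym^m f \times \Sym^m f)$ and the square structure of the remaining factors. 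For (b), $\zeta(s)^3$ contributes a pole of order $a_0 = 3$ at $s = 1$, while $\zeta(s \pm 2i\gamma)$ contribute simple poles ($a_1 = 1$) at $s = 1 \pm 2i\gamma$. Using that $L(s, \Sym^m f)$ has real Dirichlet coefficients, $\beta - i\gamma$ is also a zero, so $L(s+i\gamma, \Sym^m f)^2 L(s-i\gamma, \Sym^m f)^2$ vanishes to order $b_0 = 4$ at $s = \beta$ and to order $b_1 = 2$ at each of $s = \beta \pm 2i\gamma$. The hypotheses $b_0 \geq a_0$, $b_1 \geq a_1$, and $b_1/b_0 \geq a_1/a_0$ then hold, since $4 \geq 3$, $2 \geq 1$, and $\tfrac{1}{2} \geq \tfrac{1}{3}$.

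Taking $\delta = 1 - \beta$ and $t = \gamma$ in Theorem \ref{general-zfr-theorem} yields
\[
1 - \beta \geq \frac{2(2-\sqrt{3})^2}{\log C(\mathcal{L}) + 2\Cr{zfr1}^{-1}\sqrt{3}\,(2-\sqrt{3})},
\]
whose numerator is exactly $\Cr{zfr5}$. For (c), multiplicativity of the analytic conductor gives
\[
\log C(\mathcal{L}) = \log(1+4\gamma^2) + 2\log C(\mathcal{L}_m(f,\gamma)) + 2\log C(\mathcal{L}_m(f,-\gamma)) + \log C(\mathcal{L}_{m\times m}),
\]
and I would bound each piece using Lemma \ref{lemma:log_conductor_sym_power_L} (for the two middle terms, invoking $q_{\Sym^m f} \leq Q^{m+1}$) and Lemma \ref{lemma:conductor_L_mxm} (for the last term, which only draws on $q_{\Sym^n f}$ for $n \leq 2m$, well within the range of the theorem's hypothesis).

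The main obstacle is the subsequent bookkeeping. The two lemmas naturally produce terms involving $(m+3)$, $(m+4)$, and the constants $\Cr{zfr2}, \Cr{zfr4}$; these need to be combined, along with the additive $2\Cr{zfr1}^{-1}\sqrt{3}(2-\sqrt{3})$ from the denominator, into a single clean bound of the shape $(m+7)^2 \log\bigl(\Cr{zfr3}(k-1)Q(m+7)\sqrt{1+\gamma^2}\bigr)$. The shift $m \mapsto m+7$ in the statement is chosen precisely to provide the slack required by this absorption, while $\Cr{zfr3}$ and $\Cr{zfr5}$ are defined upstream so that the final inequality comes out exactly as claimed.
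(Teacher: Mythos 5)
Your argument is essentially the $\gamma \neq 0$ branch of the paper's proof, and within that branch it is correct (the paper writes your $F(s)$ as $L(s,\Pi_1\times\widetilde\Pi_1)$ with $\Pi_1 = |\det|^{i\gamma}\boxplus|\det|^{-i\gamma}\boxplus\Sym^m f$ and uses $a_0=3,a_1=1,b_0=4,b_1=2$ exactly as you do). One harmless difference: the $\mathcal{A}_1$ bookkeeping actually produces the sharper $(m+5)^2\log(\Cr{zfr3}(k-1)Q(m+5)\sqrt{1+\gamma^2})$, so the $(m+7)^2$ in the theorem statement does not come from this case.

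The gap is that you never treat $\gamma = 0$, and Theorem \ref{general-zfr-theorem} is stated only for $t > 0$, so it simply does not apply there. Nor can you limit to $\gamma=0$ by a limiting argument: when $\gamma = 0$ your auxiliary product degenerates to $\zeta(s)^5 L(s,\Sym^m f)^4 \prod_{j=1}^m L(s,\Sym^{2j} f)$, which has a pole of order $a_0 = 5$ at $s=1$. For $m$ odd, the zero at $\beta$ has order only $b_0 = 4 < a_0$, so the hypothesis of Theorem \ref{general-zfr-theorem} fails; for $m$ even with $m\geq 4$, the factor $\Sym^m f$ appears once more in $\prod_{j=1}^m L(s,\Sym^{2j}f)$ giving $b_0 = 5 = a_0$, so $\sqrt{b_0}-\sqrt{a_0}=0$ and the bound is vacuous. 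The paper handles $\gamma=0$ via two further auxiliary $L$-tuples: $\mathcal{A}_2$ built from $\Pi_2 = \mathbbm{1}\boxplus\Sym^2 f\boxplus\Sym^m f$ (with $a_0=3, a_1=0, b_0=4, b_1=0$), valid for $m\neq 2$, and $\mathcal{A}_3$ built from $\Pi_3 = \mathbbm{1}\boxplus\Sym^2 f$ for $m=2$ (where $\mathcal{A}_2$ itself degenerates because $\Sym^{m-2}f = \mathbbm{1}$). Incidentally, $\mathcal{A}_2$ involves $L(s,\Sym^{m+2}f)$, which is why the hypothesis needs $q_{\Sym^n f}\leq Q^{n+1}$ up to $n = 2m+1$ and not just $n\leq 2m$ as you surmised; and it is $\mathcal{A}_2$ that forces the $(m+7)^2$ shape of the final bound. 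Since a real zero in $(\tfrac12,1)$ is not ruled out (and Corollary \ref{minrho} crucially uses the $\gamma=0$ case of this theorem), the omission is not cosmetic.
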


\begin{proof}
To start, suppose $\gamma \neq 0$, and consider the $L$-tuple $\mathcal{A}_1$ with corresponding function 
\begin{align*}
    L(s, \Pi_1\!\!\times\!\widetilde{\Pi}_1) = \zeta(s)^3\zeta(s+2i\gamma)\zeta(s-2i\gamma)L(s+i\gamma, \Sym^m f)^2 L(s-i\gamma, \Sym^m f)^2\prod_{j=1}^m L(s, \Sym^{2j} f),
\end{align*}
where $\Pi_1 = |\det |^{i\gamma}\ \boxplus \ |\det |^{-i\gamma}\  \boxplus\ \Sym^m{f}$ and $\boxplus$ indicates the isobaric sum. Here, we used the fact that $ \widetilde{\Pi}_1 = \Pi_1$ since $\Sym^m f$ is self-dual.

The logarithm of $L(s, \Pi \times \widetilde{\Pi})$ has nonnegative Dirichlet coefficients for any isobaric representation $\Pi$ (\cite[Lemma a]{Hoffstein-Ramakrishnan}), so the same is true for the above auxiliary function. Moreover, since the analytic conductor is multiplicative, we can use the bounds from Lemmas \ref{lemma:log_conductor_sym_power_L} and \ref{lemma:conductor_L_mxm} to see that
\begin{align*}
    \log(C(\mathcal{A}_1)) &\leq 2\log|1+2i\gamma| + 4\log{C(\mathcal{L}_m(f, \gamma))} + \log{C(\mathcal{L}_{m \times m})} \\
    &\leq (m+5)^2\log\left(\Cr{zfr3}(k-1)Q(m+5)\sqrt{1+\gamma^2}\right) + 2\log{2} - \Cr{zfr4} - 4\Cr{zfr2}.
\end{align*} 
Applying Theorem \ref{general-zfr-theorem} with $a_0 = 3$, $a_1 = 1$, $b_0 = 4$, and $b_1 = 2$ yields \begin{align} \label{eq:zfr_1}
    1-\beta \geq \frac{2(2-\sqrt{3})^2}{\log(C(\mathcal{L})) + 2\Cr{zfr1}^{-1}\sqrt{3}(2-\sqrt{3})} \geq \frac{2(2-\sqrt{3})^2}{(m+5)^2\log\left(\Cr{zfr3}(k-1)Q(m+5)\sqrt{1+\gamma^2}\right)}.
\end{align}

Next, suppose $m \neq 2$ and $\gamma = 0$. In this case, we use the $L$-tuple $\mathcal{A}_2$ 
corresponding to 
\begin{align*}
    L(s, \Pi_2 \times \widetilde{\Pi}_2) &= \zeta(s)^3 L(s, \Sym^2{f})^3 L(s, \Sym^4{f}) \\
    &\quad \times
    L(s, \Sym^{m-2}{f})^2 L(s, \Sym^m f)^4 L(s, \Sym^{m+2}{f})^2\prod_{j=1}^m L(s, \Sym^{2j}{f}),
\end{align*} 
where $\Pi_2 = \mathbbm{1} \boxplus \Sym^2 f \boxplus \Sym^m f$. Again applying the bounds from Lemmas \ref{lemma:log_conductor_sym_power_L} and \ref{lemma:conductor_L_mxm}, we have \begin{align*}
     \log{C(\mathcal{A}_2)}
    &\leq (m+7)^2\log\left(\Cr{zfr3}(k-1)Q(m+7)\right) - (12\Cr{zfr2} + \Cr{zfr4} - 18\log{6} - 8\log{8}),
\end{align*} and so applying Theorem \ref{general-zfr-theorem} with $a_0 = 3, a_1 = 0, b_0 = 4, b_1 = 0$ gives \begin{align} \label{eq:zfr_2}
    1-\beta \geq \frac{2(2-\sqrt{3})^2}{(m+7)^2\log\left(\Cr{zfr3}(k-1)Q(m+7)\right)}.
\end{align}

Finally, for the case that $m = 2$ and $\gamma =0$, consider the $L$-tuple $\mathcal{A}_3$ 
corresponding to 
\begin{align*}
   L(s, \Pi_3 \times \widetilde{\Pi}_3) = \zeta(s)^2L(s, \Sym^2 f)^3 L(s, \Sym^4 f),
\end{align*} 
where $\Pi_3 = \mathbbm{1} \boxplus \Sym^2 f$. We have \begin{align*}
     \log(C(\mathcal{A}_3)) &\leq 
    14\log\left(9\Cr{zfr3}(k-1)Q\right) - (4\Cr{zfr2} + 14\log\left(18\Cr{zfr3}e\right) - 18\log{6} - 8\log{8}),
\end{align*} and so we obtain \begin{align} \label{eq:zfr_3}
    1-\beta \geq \frac{2(\sqrt{3}-\sqrt{2})^2}{14\log\left(9\Cr{zfr3}(k-1)Q\right)}.
\end{align} We arrive at Theorem \ref{theorem:zfr} by taking the maximum of the bounds in \eqref{eq:zfr_1}, \eqref{eq:zfr_2}, and \eqref{eq:zfr_3}.
\end{proof}

\begin{corollary}\label{minrho}
Let $\rho$ be a zero of $\Lambda(s, \Sym^m f)$ with $m \geq 1$, and suppose that $q_{\Sym^n f} \leq Q^{n+1}$ for all $1 \leq n \leq 2m+    1$. Then we have \begin{align*}
    |\rho| \geq \frac{\Cr{zfr5}}{(m+7)^2\log(\Cr{zfr3}(k-1)Q(m+7))}.
\end{align*}
\end{corollary}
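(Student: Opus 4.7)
The plan is to derive Corollary~\ref{minrho} directly from Theorem~\ref{theorem:zfr} by combining its lower bound on $1-\beta$ (applied to an appropriate functional-equation partner of $\rho$) with the trivial identity $|\rho|^2 = \beta^2 + \gamma^2$. Throughout, write $\rho = \beta + i\gamma$ and set
\begin{equation*}
    D := \frac{\Cr{zfr5}}{(m+7)^2 \log(\Cr{zfr3}(k-1)Q(m+7))}.
\end{equation*}
Crude estimates ($(m+7)^2 \geq 64$, $\Cr{zfr3}(k-1)Q(m+7) \geq 8\Cr{zfr3}$, $\Cr{zfr5}<1$) show $D < 1/2$.

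If $\beta \geq 1/2$, then $|\rho| \geq 1/2 > D$ and we are done. Assume henceforth that $\beta < 1/2$. Since $\Lambda(s,\Sym^m f)$ has real Dirichlet coefficients and satisfies the functional equation \eqref{eq:functional_equation}, the point $(1-\beta)+i\gamma$ is also a nontrivial zero, with real part strictly greater than $1/2$. Applying Theorem~\ref{theorem:zfr} to this partner zero yields
\begin{equation*}
    \beta \;\geq\; \frac{\Cr{zfr5}}{(m+7)^2 \log\bigl(\Cr{zfr3}(k-1)Q(m+7)\sqrt{1+\gamma^2}\bigr)}.
\end{equation*}

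Let $L := \log(\Cr{zfr3}(k-1)Q(m+7))$ and $L' := L + \tfrac{1}{2}\log(1+\gamma^2)$, so the previous display reads $\beta \geq D L/L'$. Then
\begin{equation*}
    |\rho|^2 \;=\; \beta^2 + \gamma^2 \;\geq\; D^2 (L/L')^2 + \gamma^2,
\end{equation*}
and so $|\rho| \geq D$ will follow once $\gamma^2 \geq D^2\bigl(1-(L/L')^2\bigr)$ is established. Since $L \leq L'$, the chain
\begin{equation*}
    1 - (L/L')^2 \;\leq\; 2(1 - L/L') \;=\; \frac{\log(1+\gamma^2)}{L'} \;\leq\; \frac{\gamma^2}{L'}
\end{equation*}
reduces the problem to the inequality $L' \geq D^2$, which follows at once from $L' \geq L \geq \log(8\Cr{zfr3}) > 1 > D^2$.

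The argument is essentially routine; the only subtlety is the last algebraic step, which must absorb the $\sqrt{1+\gamma^2}$ factor that appears inside the logarithm of Theorem~\ref{theorem:zfr} but is absent from Corollary~\ref{minrho}. The governing principle is that the $\gamma^2$ contribution to $|\rho|^2$ more than compensates for the $\gamma$-dependent weakening of the lower bound on $\beta$, so that no separate case analysis in $|\gamma|$ is required.
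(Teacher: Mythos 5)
Your proof is correct, and it diverges from the paper's proof in how the key inequality is established. Both arguments start the same way: from Theorem~\ref{theorem:zfr} and the functional equation one obtains $\beta \geq r_1(\gamma)$, where $r_1(\gamma) = DL/L'$ in your notation (and $r_1(0) = D$), and the task becomes showing that this constraint forces $|\rho| \geq D$. The paper proves the equivalent statement $r_1(\gamma) \geq r_2(\gamma) := \sqrt{r_1(0)^2 - \gamma^2}$ on $|\gamma| \leq r_1(0)$ via a second-derivative comparison: both functions are even and agree at $\gamma = 0$, and the paper bounds $r_1''(\gamma)$ from below by an explicit (and somewhat unpleasant) computation and checks $r_2'' \leq r_1''$, so that $r_1 - r_2$ is convex, even, and vanishes at the origin, hence nonnegative. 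You instead prove the equivalent inequality $r_1(\gamma)^2 + \gamma^2 \geq r_1(0)^2$ directly from the two elementary estimates $1-u^2 \leq 2(1-u)$ with $u = L/L' \in [0,1]$ and $\log(1+\gamma^2) \leq \gamma^2$, which reduces everything to the trivial numerical check $L' \geq D^2$. Your route dispenses entirely with the differentiation and is correspondingly shorter and more elementary; the preliminary case split on $\beta \geq 1/2$ versus $\beta < 1/2$ is harmless and is in any case implicit in the paper's assertion that $\beta \geq r_1(\gamma)$ holds for every nontrivial zero.
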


\begin{proof}
Write $\rho = \beta + i \gamma$, and let \begin{align*}
    r_1(\gamma) := \frac{\Cr{zfr5}}{(m+7)^2\log(\Cr{zfr3}(k-1)Q(m+7)\sqrt{1+\gamma^2})}, \qquad r_2(\gamma) := \sqrt{r_1(0)^2 - \gamma^2}.
\end{align*}
   By Theorem \ref{theorem:zfr} together with the functional equation \eqref{eq:functional_equation} for $\Lambda(s, \Sym^m f)$, we have $\beta \geq r_1(\gamma)$. To show that $|\rho| \geq r_1(0),$ it suffices to show that any point $(r_2(\gamma), \gamma)$ on the circle with radius $r_1(0)$ satisfies $r_2(\gamma) \leq r_1(\gamma)$. For $|\gamma| \leq r_1(0)$, we observe that
 \begin{align*}
    r_1''(\gamma) &= \frac{\Cr{zfr5}\left(2\gamma^2+(\gamma^2-1)\log\left(\Cr{zfr3}(k-1)Q(m+7)\sqrt{1+\gamma^2}\right)\right)}{(m+7)^2(1+\gamma^2)^2\log\left(\Cr{zfr3}(k-1)Q(m+7)\sqrt{1+\gamma^2}\right)^3} \\
    &\geq -\frac{\Cr{zfr5}}{(m+7)^2(1+\gamma^2)^2\log\left(\Cr{zfr3}(k-1)Q(m+7)\sqrt{1+\gamma^2}\right)^2} \\
    &\geq -\frac{\Cr{zfr5}}{64\log\left(8\Cr{zfr3}\right)^2},
\end{align*} 
from which we find that \begin{align*}
    -r_2''(\gamma) \geq \frac{(m+7)^2\log(\Cr{zfr3}(k-1)Q(m+7))}{\Cr{zfr5}} \geq \frac{64\log(8\Cr{zfr3})}{\Cr{zfr5}} \geq \frac{\Cr{zfr5}}{64\log\left(8\Cr{zfr3}\right)^2} \geq -r_1''(\gamma).
\end{align*} 
Because $r_1(\gamma)$ and $r_2(\gamma)$ are even functions that agree at $\gamma = 0$, the fact that $r_2''(\gamma) \leq r_1''(\gamma)$ implies that $r_2(\gamma) \leq r_1(\gamma)$.
\end{proof}

\section{Vertical Distribution of Zeros}\label{verticalsection}

For $T \geq  1$, let $N(T, \Sym^m f)$ be the number of zeros $\rho = \beta + i \gamma$ of $L(s, \Sym^m f)$ with $0 < \beta < 1$ and $\lvert\gamma\rvert \leq T$. In this section, we will give an upper bound for $N(T,\Sym^m f)$ for $m\geq 1$. In particular, we prove the following theorem.

\begin{theorem} \label{Thm:RiemannVonMangoldt} 
Let $m\geq 1$ and $T \geq 1$, and suppose that $q_{\Sym^m f} \leq Q^{m+1}$ for some $Q \geq 1$. Then we have
\begin{align} \label{eq:RVM}
    N(T, \Sym^m f) &\leq \frac{1}{\pi} \bigg[ (m+1)T\log\left( \frac{\sqrt{2}}{2\pi e}(k-1)Q(m+2)T\right) + T\log(m+2) \nonumber\\
    &\quad \qquad  + \Cr{b_4}(m+1)\log(\Cr{b_{100}}(k-1)Q(m+2)T) + \frac{m+3+6\Cr{b_{130}}}{6T} \bigg].
\end{align}
where $\Cr{b_4} \leq 15.998$, $\Cr{b_{100}} \leq 17\,555$, and $\Cr{b_{130}} = 31.996$. Moreover, for $T \geq 200$, we have
\begin{equation} \label{eq:RVM_200}
    N(T, \Sym^m f) \leq \Cr{c_{303}} (m+1)  T \log ((k-1) Q(m+2)T),
\end{equation}
where $\Cr{c_{303}} := 0.593$, and we may write \begin{equation} \label{eq:RVM_1}
    N(1,\Sym^m f) \leq \Cr{c_{310}} (m+1) \log((k-1)Q(m+2)),
\end{equation}
where $\Cr{c_{310}} := 56.662$. 
\end{theorem}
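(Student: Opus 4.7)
The plan is to prove \eqref{eq:RVM} via the argument principle applied to $\Lambda(s, \Sym^m f)$, then deduce \eqref{eq:RVM_200} and \eqref{eq:RVM_1} as numerical specializations. First, set up a rectangular contour $R$ with vertices $\sigma_0 \pm iT$ and $1 - \sigma_0 \pm iT$, for a parameter $\sigma_0 > 1$ to be optimized (for instance $\sigma_0 = \frac{5}{4}$). Since $\Lambda(s, \Sym^m f)$ is entire of order $1$ by property (ii) in Section \ref{section-symmetric-powers}, and its zeros in the critical strip are exactly the nontrivial zeros of $L(s, \Sym^m f)$, the argument principle yields
\[
2\pi N(T, \Sym^m f) = \oint_R d\arg \Lambda(s, \Sym^m f).
\]
The functional equation \eqref{eq:functional_equation}, combined with the fact that $\Sym^m f$ has real Dirichlet coefficients, forces the contributions from opposite edges of $R$ to coincide, reducing the task to computing the argument change along the right vertical edge and the top horizontal edge and doubling.

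Decompose $\log \Lambda(s, \Sym^m f) = \tfrac{s}{2}\log q_{\Sym^m f} + \log \gamma(s, \Sym^m f) + \log L(s, \Sym^m f)$ and estimate each piece separately. The conductor contributes $T\log q_{\Sym^m f} \leq (m+1)T \log Q$ across the full contour. For the gamma factor I would apply Stirling's approximation with explicit error terms to each $\Gamma_\R$ factor in \eqref{gammasconj11}, handling the odd and even $m$ cases separately and tracking the shifts by multiples of $(k-1)$. Summing over all $m+1$ $\Gamma_\R$ factors produces the main term $(m+1)T\log((k-1)T/(2\pi e))$; the $T\log(m+2)$ and $\sqrt{2}$ contributions in \eqref{eq:RVM} arise from uniformly bounding $|\sigma + it + j(k-1)|$ for $j$ up to roughly $(m+1)/2$ and from the explicit Stirling remainder, and the residual $\log T$ and constant pieces are collected into the error line.

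For the $L$-function on the right edge, the bound $|\Lambda_{\Sym^m f}(n)| \leq (m+1)\Lambda(n)$ from \eqref{RTlemma} immediately gives $|\arg L(\sigma_0 + it, \Sym^m f)| \leq (m+1)|\log\zeta(\sigma_0)|$. The main obstacle is bounding $\arg L$ along the horizontal edge from $1-\sigma_0 + iT$ to $\sigma_0 + iT$. I would employ a Backlund-type argument: count sign changes of $\Re L(\sigma + iT, \Sym^m f)$ by applying Jensen's formula on a disk centered at $\sigma_0 + iT$ whose radius encloses the horizontal segment. This requires an upper bound for $|L|$ on the disk, which is obtained by combining the Dirichlet series bound on $\Re s = \sigma_0$ with the functional equation reflecting to $\Re s = 1-\sigma_0$, together with Phragm\'en--Lindel\"of convexity. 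The analytic conductor estimate from Lemma \ref{lemma:log_conductor_sym_power_L} enters at this step, producing the error term $\Cr{b_4}(m+1)\log(\Cr{b_{100}}(k-1)Q(m+2)T)$ and the $\frac{m+3+6\Cr{b_{130}}}{6T}$ tail from the unbounded portion of the integral. Tracking these constants near-optimally is the main technical burden, since slack at this stage directly inflates $\Cr{b_4}$, $\Cr{b_{100}}$, and consequently $\Cr{c_{303}}$ and $\Cr{c_{310}}$.

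For the corollaries, \eqref{eq:RVM_200} follows by checking that all subleading terms in \eqref{eq:RVM} are absorbed into the main term $(m+1)T\log((k-1)Q(m+2)T)$ with combined coefficient at most $\Cr{c_{303}} = 0.593$ once $T \geq 200$; this is a numerical verification using monotonicity in $T$ after checking the boundary case. For \eqref{eq:RVM_1}, substituting $T = 1$ collapses the $T\log T$-type pieces into bounded quantities and the $\frac{m+3+6\Cr{b_{130}}}{6T}$ remainder becomes $O(m+1)$; summing all contributions yields the claimed constant $\Cr{c_{310}} = 56.662$.
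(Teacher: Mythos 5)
Your overall scaffolding matches the paper — argument principle on a rectangle, functional-equation and conjugation symmetries to halve the contour twice, decomposition of $\log\Lambda$ into conductor, gamma, and $L$-factor pieces, Stirling for the gamma factor, and the Ramanujan-type bound $|\Lambda_{\Sym^m f}(n)|\leq(m+1)\Lambda(n)$ for the vertical leg. The genuine divergence is in your treatment of $\Delta\arg L$ along the horizontal leg. You propose a Backlund-type argument: count sign changes of $\Re L(\sigma+iT,\Sym^m f)$ via Jensen's formula on a disk, feeding in a convexity bound for $|L|$ via Phragm\'en--Lindel\"of and the analytic conductor of Lemma \ref{lemma:log_conductor_sym_power_L}. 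The paper instead uses a Landau/Davenport-style decomposition (its Lemma \ref{lemma:L_contour}): it writes $\tfrac{L'}{L}(s,\Sym^m f)$ via the Hadamard product \eqref{eq:log_deriv_hadamard_prod}, splits the sum over zeros into $|T-\gamma|\leq 1$ and $|T-\gamma|\geq 1$, bounds the near zeros by the local count $N_1(T,\Sym^m f)$ from Lemma \ref{N1T}, and bounds the far zeros by the real part of the log derivative at $\sigma_1+iT$ — avoiding any explicit convexity bound on $|L|$ itself. Both routes are standard and yield the same shape of result; the paper's route is arguably cleaner here because the quantities it needs ($\gamma'/\gamma$, $\Re B_{\Sym^m f}$, $N_1(T)$) are already controlled by the setup in Section \ref{section-symmetric-powers}, and because getting an explicit, uniform-in-$m$ constant out of Phragm\'en--Lindel\"of for a degree-$(m+1)$ $L$-function requires additional care that the Hadamard-product approach sidesteps. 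One caveat on your write-up: you assert that the Jensen/Backlund computation produces precisely the error term $\Cr{b_4}(m+1)\log(\Cr{b_{100}}(k-1)Q(m+2)T)+\tfrac{m+3+6\Cr{b_{130}}}{6T}$, but those particular numerical constants (and the paper's choice $\sigma_1=\tfrac32$, which you replace with $\tfrac54$) are tied to the paper's specific decomposition; your route would give the same shape but would need its own constant-tracking to compare.
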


For a fixed $1<\sigma_1 < 2$, let $R$ be the positively oriented rectangle with vertices $\sigma_1 \pm iT, 1-\sigma_1 \pm iT$. By the argument principle, we have
\begin{equation*}
    2\pi N(T, \Sym^m f) = \Delta_R \arg \Lambda(s, \Sym^m f).
\end{equation*} Furthermore, by \eqref{eq:functional_equation}, the contributions to $\Delta_R \arg \Lambda(s, \Sym^m f)$ from the left and right sides of the contour are equal. Accordingly, let $\mathcal{C}$ be 
the part of the contour from $\frac12 -iT$ to $\sigma_1 -iT$ to $\sigma_1 + iT$ to $\frac12 + iT$, so that
\begin{align} \label{eq:contour}
    \pi N(T, \Sym^m f) & = \Delta_{\mathcal{C}} \arg q_{\Sym^m f}^{s/2} + \Delta_{\mathcal{C}} \arg \gamma(s, \Sym^m f) + \Delta_{\mathcal{C}} \arg L(s, \Sym^m f).
\end{align}
In Lemmas \ref{lemma:gamma_contour} and \ref{lemma:L_contour}, we bound the contributions from the $\gamma$- and $L$-terms.

We will use the following explicit form of Stirling's formula from \cite{O}, as stated in \cite[Equation 2.4]{Tr}, several times throughout this section. 
\begin{lemma}[Stirling's Formula] \label{Stirling}
For $|\arg z| \leq \frac{\pi}{2}$, we have
   \begin{equation*} \log \Gamma(z) = \left(z-\frac12 \right)\log z - z + \frac12 \log 2\pi + \frac{\theta(z)}{6|z|},
    \end{equation*}
where $\theta(z) \in \mathbb{C}$ satisfies $|\theta(z)| \leq 1$.
\end{lemma}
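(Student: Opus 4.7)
The plan is to derive the estimate by bounding the remainder in the Stirling series for $\log\Gamma$. Setting
$$J(z) := \log \Gamma(z) - (z - \tfrac{1}{2})\log z + z - \tfrac{1}{2}\log(2\pi),$$
the identity reduces to proving $|J(z)| \leq \tfrac{1}{6|z|}$ throughout the closed right half-plane $|\arg z| \leq \pi/2$; for then $\theta(z) := 6|z|\,J(z)$ satisfies $|\theta(z)| \leq 1$ and substituting back recovers the claimed formula.

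First I would establish Binet's first integral representation
$$J(z) = \int_0^\infty \left(\frac{1}{2} - \frac{1}{t} + \frac{1}{e^t-1}\right) \frac{e^{-zt}}{t}\,dt, \qquad \Re(z) > 0,$$
which can be derived from Gauss's limit for $\log\Gamma(z)$ together with the Frullani-type identity $\log z = \int_0^\infty (e^{-t} - e^{-zt})/t\,dt$, or by integrating the analogous representation for the digamma function $\psi(z)$. The integrand admits the elementary inequality $0 \leq \tfrac{1}{2} - \tfrac{1}{t} + \tfrac{1}{e^t-1} \leq \tfrac{t}{12}$ for $t > 0$, which may be proved directly or extracted from the partial-fractions expansion $\tfrac{1}{e^t-1} - \tfrac{1}{t} + \tfrac{1}{2} = \sum_{k\geq 1}\tfrac{2t}{t^2+4\pi^2 k^2}$. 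Applied directly to the integral, this gives $|J(z)| \leq \tfrac{1}{12\Re(z)}$ for $\Re(z) > 0$, which is sharp for real $z$ but useless as $\arg z \to \pm\pi/2$.

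To extend this into a bound in terms of $|z|$ valid throughout the sector $|\arg z| \leq \pi/2$, I would rotate the contour of integration to the ray $\arg t = -\arg(z)/2$. Along this ray $|e^{-zt}| = \exp(-|z|t\cos(\arg z/2))$, and after carrying the pointwise bound on the integrand through to the rotated ray (which costs an additional factor of $1/\cos(\arg z/2)$ in extending the $t/12$ estimate to complex arguments via the partial-fraction series) one arrives at the sectorial bound
$$|J(z)| \leq \frac{1}{12|z|\cos^{2}(\arg z/2)}, \qquad |\arg z| < \pi.$$
Justifying the rotation requires checking that the arc integrals vanish at the origin and at infinity, both of which are immediate from the $O(t)$ behavior of the integrand near $0$ and the exponential decay at infinity. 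Specializing to $|\arg z| \leq \pi/2$ gives $\cos(\arg z/2) \geq \cos(\pi/4) = 1/\sqrt{2}$, so $\cos^{2}(\arg z/2) \geq \tfrac{1}{2}$, and hence $|J(z)| \leq \tfrac{1}{6|z|}$, as required.

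The main obstacle is the sectorial bound itself and the accompanying contour-rotation step, where one must argue carefully that the integrand retains its pointwise estimate when the contour is tilted toward the imaginary axis; this is precisely where the constant $1/6$ (rather than $1/12$) emerges, via the worst-case factor $\cos^{-2}(\pi/4) = 2$. Since the result is classical and the full technical derivation is written out in Olver's reference, in practice one simply quotes the statement, as the paper does.
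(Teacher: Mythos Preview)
The paper does not prove this lemma at all: it is stated with a citation to Olver \cite{O} (via Trudgian \cite{Tr}) and used as a black box. Your sketch therefore goes well beyond what the paper provides, and you correctly note this in your final paragraph.

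Your outline is the classical derivation and is correct. Binet's first formula for $J(z)$ is valid for $\Re(z)>0$; the pointwise bound $0 \le \tfrac{1}{2} - \tfrac{1}{t} + \tfrac{1}{e^t-1} \le \tfrac{t}{12}$ is standard; and the contour-rotation argument yielding the sectorial estimate $|J(z)| \le \tfrac{1}{12|z|}\sec^2(\tfrac{1}{2}\arg z)$ for $|\arg z| < \pi$ is exactly how Olver obtains the result. Specializing to $|\arg z| \le \pi/2$ then gives the factor of $2$ that turns $1/12$ into $1/6$. The only point worth flagging is that the boundary case $\arg z = \pm\pi/2$ lies strictly inside the sector $|\arg z|<\pi$ where the rotated integral converges, so no separate limiting argument is needed there; you implicitly rely on this and it is fine.
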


We begin with the following bounds for $\Delta_{\mathcal{C}} \arg \Gamma_{\mathbb{C}}$ and $\Delta_{\mathcal{C}} \arg \Gamma_{\mathbb{R}}$.

\begin{lemma} \label{lemma:gammaC_and_gammaR_contour}
Let $T \geq 1$. For the curve $\mathcal{C}$ described at the start of the section, we have the following bounds on the summands of $\Delta_{\mathcal{C}} \arg \gamma(s, \Sym^m f)$:
\begin{align*}
    |\Delta_{\mathcal{C}} \arg \Gamma_{\mathbb{C}}(s + \mu)| &\leq -2T \log (2\pi e) + T \log \left(\left({\textstyle \frac12} + \mu \right)^2+ T^2\right) \nonumber \\
    & \ \ \ \ \  + 2\mu \tan^{-1}\left(\frac{T}{\frac12 + \mu}\right) + \frac{1}{3|\frac{1}{2}+\mu+ iT|}, \\
     \nonumber \\
    | \Delta_{\mathcal{C}} \arg  \Gamma_{\mathbb{R}}(s+r) | &\leq  -T \log 2 \pi e + T\log T + \frac{T}{2}\log \left(1 + \frac{(2r+1)^2}{4T^2}\right) \nonumber \\
    & \ \ \ \ \ + \frac{2r-1}{2} \tan^{-1} \left(\frac{2T}{2r + 1}\right) + \frac{2}{3|\frac12 + r + iT|}.
\end{align*}
\end{lemma}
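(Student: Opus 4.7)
The plan is to compute each change in argument as the imaginary part of a contour integral of a logarithmic derivative, evaluate this integral exactly using the antiderivative $\log \Gamma$, and then apply Stirling's formula at the endpoints. Since $\Gamma$ has no zeros on $\Re(z) > 0$, both $\Gamma_{\mathbb{C}}(s+\mu)$ and $\Gamma_{\mathbb{R}}(s+r)$ are holomorphic and nonvanishing in a neighborhood of $\mathcal{C}$, so I can write $\Delta_{\mathcal{C}} \arg g(s) = \Im \int_{\mathcal{C}} \frac{g'(s)}{g(s)}\,ds$. The constant pieces of the logarithmic derivatives, namely $-\log(2\pi)$ for $\Gamma_{\mathbb{C}}$ and $-\tfrac{1}{2}\log \pi$ for $\Gamma_{\mathbb{R}}$, integrate against the net vertical displacement $2iT$ of $\mathcal{C}$ to yield $-2T\log(2\pi)$ and $-T\log \pi$ respectively.

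For the $\Gamma_{\mathbb{C}}$ case, the remaining $\psi$-contribution is $\Im\bigl[\log\Gamma(\tfrac{1}{2}+\mu+iT) - \log\Gamma(\tfrac{1}{2}+\mu-iT)\bigr]$, which by Schwarz reflection equals $2\,\Im\log\Gamma(\tfrac{1}{2}+\mu+iT)$. Applying Lemma \ref{Stirling} at $z=\tfrac{1}{2}+\mu+iT$ and expanding $(z-\tfrac{1}{2})\log z - z$ using $\log z = \tfrac{1}{2}\log((\tfrac{1}{2}+\mu)^2+T^2) + i\arctan(T/(\tfrac{1}{2}+\mu))$, the imaginary part of the principal Stirling terms evaluates to $\tfrac{T}{2}\log((\tfrac{1}{2}+\mu)^2+T^2) + \mu\arctan(T/(\tfrac{1}{2}+\mu)) - T$. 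Doubling this and combining with $-2T\log(2\pi)$ produces the stated main expression; the doubled Stirling remainder $\tfrac{\theta(z)}{6|z|}$ contributes an error of modulus at most $\tfrac{1}{3|\tfrac{1}{2}+\mu+iT|}$, after which the triangle inequality gives the claimed bound on $|\Delta_{\mathcal{C}} \arg \Gamma_{\mathbb{C}}(s+\mu)|$.

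The $\Gamma_{\mathbb{R}}(s+r)$ bound proceeds in the same manner after the substitution $u=(s+r)/2$: the integrand becomes $-\tfrac{1}{2}\log\pi + \tfrac{1}{2}\psi((s+r)/2)$, and the $\psi$-contribution integrates to $\log\Gamma((s+r)/2)$ evaluated at the endpoints $\tfrac{1}{2}\pm iT$. Setting $w := \tfrac{1}{4}+\tfrac{r}{2}+\tfrac{iT}{2}$ and observing that $|w| = \tfrac{1}{2}|\tfrac{1}{2}+r+iT|$, the analogous Stirling expansion gives the imaginary part of $\log\Gamma(w)$ as $\tfrac{T}{2}\log|w| + \tfrac{2r-1}{4}\arctan(2T/(2r+1)) - \tfrac{T}{2}$, up to an error of modulus $\tfrac{1}{6|w|}$. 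Doubling, combining with $-T\log\pi$, and using the algebraic identity $4|w|^2 = 4T^2+(2r+1)^2$ (which lets one pull a factor of $T^2$ out of $\log|w|^2$ to obtain the displayed form $T\log T + \tfrac{T}{2}\log(1+\tfrac{(2r+1)^2}{4T^2})$) produces the stated bound, with total Stirling error $\tfrac{2}{3|\tfrac{1}{2}+r+iT|}$.

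The main technical obstacle here is bookkeeping rather than analysis: keeping track of the extra factor of $\tfrac{1}{2}$ introduced by the substitution $u=(s+r)/2$, the doubling from the Schwarz-reflection step, and the choice of $\arctan$ branch (legitimate because $\mu,r\geq 0$ keeps $\Re z > 0$ throughout). No tool beyond Lemma \ref{Stirling} and routine algebraic simplification is required.
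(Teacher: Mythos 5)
Your proposal follows the same route as the paper: write $\Delta_{\mathcal C}\arg$ as the difference of $\Im\log$ at the endpoints, use Schwarz reflection $\overline{\Gamma(z)}=\Gamma(\bar z)$ to double the value at $\tfrac12+\mu+iT$, and apply Lemma~\ref{Stirling}. (The paper abbreviates the $\Gamma_{\mathbb R}$ case by citing Trudgian, but your direct computation is equivalent.) One arithmetic slip: with $w=\tfrac14+\tfrac r2+\tfrac{iT}{2}$ as you defined it, the correct identity is $16|w|^2=4T^2+(2r+1)^2$, not $4|w|^2=4T^2+(2r+1)^2$. The factor matters: the correct form gives $|w|^2=\tfrac{T^2}{4}\bigl(1+\tfrac{(2r+1)^2}{4T^2}\bigr)$, so $T\log|w|=T\log T-T\log 2+\tfrac{T}{2}\log\bigl(1+\tfrac{(2r+1)^2}{4T^2}\bigr)$, and it is exactly this $-T\log 2$ that combines with the $-T\log\pi$ from the constant part and the $-T$ from Stirling to assemble $-T\log(2\pi e)$; your stated identity would give $|w|^2=T^2(1+\cdots)$ and lose the $\log 2$. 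Since your displayed conclusion is nevertheless the correct one, this is a transcription error in the write-up rather than a flaw in the method.
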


\begin{proof} By Lemma \ref{Stirling}, we have
    \begin{align*}
        \Delta_{\mathcal{C}} \arg \Gamma(s + \mu) &= 2 \Im \log \Gamma \left(\tfrac{1}{2} + \mu + iT \right) \\
        &= 2 \mu \tan^{-1}\left(\frac{T}{\textstyle{\frac12 }+ \mu}\right) + T \log \left(\left(\frac12 + \mu\right)^2 + T^2\right) - 2T + \frac{\Im \theta(z)}{3|\frac12 + \mu + iT|},
    \end{align*}
where $|\theta| \leq 1$. Similarly, following the argument from Section 2 of \cite{Tr}, we see that
\begin{align*}
    \Delta_{\mathcal{C}} \arg \Gamma \left( \frac{s+r}{2} \right) 
    &= 2 \Bigg( \frac{T}{2} \log \frac{T}{2e} + \frac{T}{4} \log \left(1 + \frac{(2r+1)^2}{4T^2}\right) \\
    & \ \ \ \ \ \ \  \quad + \frac{2r-1}{4} \tan^{-1}\left(\frac{2T}{2r+1}\right) + \frac{\theta(z)}{3|\frac12 + r + iT|} \Bigg). 
\end{align*}
The stated bounds follow.
\end{proof}

\begin{lemma} \label{lemma:gamma_contour} 
Let $T \geq 1$. For the curve $\mathcal{C}$ described at the start of the section, we have
    \begin{align*}
        |\Delta_{\mathcal{C}} \gamma(s, \Sym^m f)| \leq  (m+1) T \log \left( \frac{\sqrt{2}}{2 \pi e} T (k-1)(m+2)  \right) + T \log (m+2) + \frac{m+3}{6} \, \frac{1}{T}.
    \end{align*}
\end{lemma}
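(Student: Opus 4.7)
The plan is to apply Lemma \ref{lemma:gammaC_and_gammaR_contour} to each Gamma factor appearing in the explicit product \eqref{gammasconj11} defining $\gamma(s, \Sym^m f)$ and sum the resulting bounds. Since $\Delta_{\mathcal{C}}\arg$ is additive across products of non-vanishing analytic functions, the quantity $\Delta_{\mathcal{C}}\arg \gamma(s,\Sym^m f)$ decomposes as a sum of contributions from each individual Gamma factor, to which Lemma \ref{lemma:gammaC_and_gammaR_contour} applies directly.

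By \eqref{gammasconj11}, for $m$ odd the gamma factor has $(m+1)/2$ factors of type $\Gamma_{\mathbb{C}}$ with shifts $\mu_j := (j-\tfrac12)(k-1)$, while for $m$ even it has one $\Gamma_{\mathbb{R}}$ factor with shift $r\in\{0,1\}$ together with $m/2$ factors of type $\Gamma_{\mathbb{C}}$ with shifts $j(k-1)$. In either parity, splitting each $\Gamma_{\mathbb{C}}$ as $\Gamma_{\mathbb{R}}(\cdot)\Gamma_{\mathbb{R}}(\cdot+1)$ yields $m+1$ total $\Gamma_{\mathbb{R}}$ factors, which accounts for the leading coefficient $(m+1)T$ in the target bound.

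The critical reduction is the identity $T\log((\tfrac12+\mu)^2+T^2)+2\mu\tan^{-1}(T/(\tfrac12+\mu)) = 2T\log|z_\mu|+2\mu\arg z_\mu$ with $z_\mu:=\tfrac12+\mu+iT$, combined with the bound
\[
(2\mu+1)\arg z_\mu = 2(\tfrac12+\mu)\tan^{-1}\!\bigl(T/(\tfrac12+\mu)\bigr) \le 2T,
\]
which is a direct consequence of $\tan^{-1}(y)\le y$ applied with $y = T/(\tfrac12+\mu)$. This absorbs the linear-in-$T$ pickup from the arctan into the normalization $-2T\log(2\pi e)$ and reduces each $\Gamma_{\mathbb{C}}$-contribution to $\le 2T\log(|z_{\mu_j}|/(2\pi))+1/(3T)$. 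For the lone $\Gamma_{\mathbb{R}}$-contribution in the even case, I will bound the correction term $\tfrac{T}{2}\log(1+(2r+1)^2/(4T^2))$ via $\log(1+x)\le x$ and the associated arctan via $\tan^{-1}\le\pi/2$, absorbing both into the residual constants.

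Summing the individual contributions, using $|z_{\mu_j}|\le \tfrac12+\mu_j+T$, and applying the AM--GM inequality $\prod_j(\tfrac12+\mu_j+T)\le (T+\tfrac12+(k-1)(m+1)/4)^{(m+1)/2}$ in the odd case (with the obvious analogue in the even case) reduces the claim to an elementary comparison of the form
\[
e\bigl(T+\tfrac12+(k-1)(m+1)/4\bigr) \le \sqrt{2}\,(k-1)(m+2)^{(m+2)/(m+1)}T,
\]
valid for all $T\ge 1$, $k\ge 2$, $m\ge 1$ by separately comparing the $T$-independent and $T$-linear parts. The main obstacle is the final bookkeeping — reconciling the $\log 2$ pickup from bounding $\log(a^2+b^2)\le 2\log(a+b)$, the AM--GM defect (which is what ultimately contributes the isolated $T\log(m+2)$ summand), and the parity-dependent $\Gamma_{\mathbb{R}}$ correction, so that the bound lands precisely on the stated closed form with constants $\sqrt{2}/(2\pi e)$ inside the logarithm and error term $(m+3)/(6T)$.
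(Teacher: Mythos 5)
Your overall structure mirrors the paper's: both proofs feed each Gamma factor of the explicit product \eqref{gammasconj11} into Lemma~\ref{lemma:gammaC_and_gammaR_contour}, bound the $\Gamma_{\mathbb{C}}$ arctan via $\tan^{-1}(y)\le y$ so the linear pickup $2T$ converts $-2T\log(2\pi e)$ into $-2T\log(2\pi)$, and then sum. Where you genuinely diverge is in how the sum of logarithms is compressed. The paper keeps $T$ and $\mu_j$ separate by applying $\log(T^2+a^2)\le\log 2+2\log T+2\log a$ (valid for $T,a\ge 1$) and then estimating $\sum_j\log(j-\delta_2(m)/2)$ by an integral, producing the $(m+2)\log(m+2)$ term directly. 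You instead bound $|z_{\mu_j}|\le T+\tfrac12+\mu_j$ and apply AM--GM to $\prod_j$, which mixes $T$ and the shift into one average; reconciling against the target then requires the elementary inequality $e(T+\tfrac12+(k-1)(m+1)/4)\le\sqrt{2}\,T(k-1)(m+2)^{(m+2)/(m+1)}$. Your derivation of that reduction is algebraically correct (exponentiating the $\log$ comparison and taking $(m+1)$-th roots), and the inequality itself does hold for $T\ge1$, $k\ge2$, $m\ge1$: the ratio $R/L$ is increasing in $T$, so it suffices to check $T=1$, and there the worst case $k=2$ gives $\sqrt{2}(m+2)^{(m+2)/(m+1)}\ge e(\tfrac{3}{2}+\tfrac{m+1}{4})$, which one verifies for $m=1$ (where it reads $3\sqrt{6}\ge 2e$) and for large $m$ (where it tends to $\sqrt{2}\ge e/4$). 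The AM--GM route is arguably cleaner to state but loses a small amount in the odd case when $k$ is large, which is why the paper factors out $(k-1)$ explicitly before applying the integral bound; both lose and gain in different parameter ranges, and both land inside the stated constant.

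The one place where I would urge caution is the even case, and specifically your proposal to bound the $\Gamma_{\mathbb{R}}$ arctan by $\pi/2$. That produces a $T$-independent excess of $\pi/4$, which together with $\tfrac{9}{8T}+\tfrac{2}{3T}$ blows the $\tfrac{m+3}{6T}$ error budget at $T=1$ (the excess there is roughly $\tfrac{31}{24}+\tfrac{\pi}{4}\approx 2.08$), so you must absorb it into slack from the logarithmic comparison. The paper sidesteps this by instead applying $\tan^{-1}(y)\le y$ to the $\Gamma_{\mathbb{R}}$ factor as well, obtaining $T/3$, which is linear in $T$ and can be charged against the $(m+1)T$ being reconstituted from the $e^{-1}$ inside the log; the resulting check reduces to $-T\log 2 - \tfrac{2}{3}T + \tfrac{31}{24T}\le 0$, which is clean for $T\ge 1$. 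Your $\pi/4$ can also be absorbed — I checked representative even cases and the slack suffices, partly because your $T$-linear slack is slightly larger ($T$ rather than $\tfrac{2}{3}T$) — but this absorption is exactly the ``main obstacle'' you flag and leave open. As written, the proposal is a correct plan with a sound alternative middle estimate, but it is not yet a complete proof: the even-case reconciliation needs to be carried out, and I would recommend switching to $\tan^{-1}(y)\le y$ for the $\Gamma_{\mathbb{R}}$ factor so the extra term is linear in $T$ rather than constant, which makes that reconciliation mechanical instead of delicate.
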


\begin{proof}
Let $\delta_2(m) = 1$ for $m$ odd, and 0 otherwise. Then using Lemma \ref{lemma:gammaC_and_gammaR_contour}, we see that 
\begin{align} \label{eq:gamma_orig_expression}
    \left|\Delta_{\mathcal{C}} \arg \gamma(s, \Sym^m f)\right| &\leq -(m+ \delta_2(m)) T \log (2\pi e) + T S_1(m, T) + 2 S_2(m, T)  + \frac{1}{3} S_3(m,T)  \nonumber \\
    & \quad + (1-\delta_2(m)) \, |\Delta_{\mathcal{C}} \arg \Gamma_{\mathbb{R}} (s+r) |,
\end{align}
where we define the sums
\begin{align*}
    S_1(m, T) &:= \!\!\!\!\sum_{j=1}^{\frac12 (m+ \delta_2(m))} \log \bigg(
        T^2 + \bigg(
            \textstyle{\frac{1}{2}} + \Big(j-{\textstyle \frac{\delta_2(m)}{2}}\Big)(k-1)
        \bigg)^2
    \bigg), \\
    S_2(m, T) &:= \!\!\!\!\sum_{j=1}^{\frac12 (m+ \delta_2(m))} \left(j - {\textstyle \frac{\delta_2(m)}{2}}\right)(k-1) \tan^{-1} \Bigg( 
        \frac{T}{\frac{1}{2} + \left(j-\frac{\delta_2(m)}{2}\right)(k-1)}
    \Bigg), \\
    S_3(m, T) &:= \!\!\!\!\sum_{j=1}^{\frac12 (m+ \delta_2(m))} {\left|\textstyle{\frac12} + \left(j-{\textstyle \frac{\delta_2(m)}{2}}\right)(k-1) + iT \right|}^{-1}.
\end{align*} We proceed to estimate these sums. Noting that $\log(x^2 + y^2) \leq \log (2 x^2 y^2)$ for $x, y \geq 1$, we see
\begin{align*}
    S_1(m, T) &\leq \sum_{j=1}^{\frac12 (m+ \delta_2(m))} \log \bigg(
        2 T^2 \bigg(
            \frac{1}{2} + \Big(j-{\textstyle \frac{\delta_2(m)}{2}}\Big)(k-1)
        \bigg)^2
    \bigg) \\
    &\leq \frac12 (m + \delta_2(m)) \log (8T^2(k-1)^2) + 2 \sum_{j=1}^{\frac12 (m + \delta_2(m))} \log \left( 
         j - {\textstyle \frac{\delta_2(m)}{2}}
    \right) \\
    &\leq  (m + \delta_2(m)) \log \left( 
        \sqrt{2}{e^{-1}}\, T (k-1)
    \right) + (m+2) \log (m+2) \\
    &\quad - (2 - \delta_2(m)) \log (2 - \delta_2(m)),
\end{align*}
where the last step comes from observing that \[\sum_{j=1}^{\frac12 (m+ \delta_2(m))} \log \left( j - \frac{\delta_2(m)}{2} \right) \leq  \int_{1}^{\frac12 (m+ \delta_2(m)) + 1} \log \left(u - \frac{\delta_2(m)}{2}\right) \, du. \]
For the next two sums, we use the bounds
\[
    S_2(m,T) \leq \frac{m + \delta_2(m)}{2}\,T,  \qquad S_3(m, T) \leq \frac{m+ \delta_2(m)}{2} \, \frac{1}{T}.
\]
Finally, using Lemma \ref{lemma:gammaC_and_gammaR_contour} and observing that $\frac{2\sqrt{2}}{e}  > 1$ and $\log (1 + x) \leq x$ for $x \geq 1,$ we write 
\begin{align*}
    (1- \delta_2(m))& |\Delta_{\mathcal{C}} \arg \Gamma_{\mathbb{R}} (s+r)| \\
    &  \leq (1 - \delta_2(m)) \Bigg[
       -T \log (2\pi e) + T \log \left( 
            \frac{\sqrt{2}}{e} \, T(k-1)
       \right) + T \log 2 + \frac{9}{8T}  + \frac{T}{3} + \frac{2}{3T}
    \bigg].
\end{align*} The desired result follows by applying all of these bounds to \eqref{eq:gamma_orig_expression}.
\end{proof}

To bound $\Delta_{\mathcal{C}} \arg L(s, \Sym^m\! f),$ we will require bounds on $|\frac{\gamma'}{\gamma}(s)|$ and the number $N_1(T, \Sym^m f)$ of nontrivial zeros $\rho = \beta + i \gamma$ of $L(s, \Sym^m f)$ satisfying $|T-\gamma| \leq 1$. We will use the following explicit bound on $\left| \frac{\Gamma'}{\Gamma}(s) \right|$ given in the proof of Lemma 4 of \cite{KEN}.

\begin{lemma}[Ono--Soundararajan] \label{Ono-Sound}
For $s = \sigma + it$ with $\sigma \geq 1,$ we have
\[
    \left| \frac{\Gamma'}{\Gamma}(s) \right| \leq \frac{11}{3} + \frac{\log(1 + |s|^2)}{2}.
\]
\end{lemma}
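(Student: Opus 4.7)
The plan is to combine Binet's integral representation of the digamma function with straightforward triangle-inequality bounds on each resulting piece. Recall that for $\Re(s)>0$, one has
\[
    \frac{\Gamma'}{\Gamma}(s) \;=\; \log s \;-\; \frac{1}{2s} \;-\; 2\int_0^\infty \frac{t}{(s^2+t^2)(e^{2\pi t}-1)}\,dt.
\]
This representation is convenient because it already isolates the leading term $\log s$, which is exactly what will produce the $\tfrac12\log(1+|s|^2)$ on the right-hand side. I would take this identity as the starting point.

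First I would bound $|\log s|$. Writing $\log s = \log|s| + i\arg s$ with $|\arg s|\le \pi/2$ (since $\Re(s)\ge 1>0$), one gets $|\log s|\le \log|s|+\pi/2$. For $|s|\ge 1$ (which follows from $\sigma\ge 1$), the inequality $\log|s|=\tfrac12\log|s|^2 \le \tfrac12\log(1+|s|^2)$ is immediate, yielding
\[
    |\log s| \;\le\; \tfrac12\log(1+|s|^2) \;+\; \tfrac{\pi}{2}.
\]
The term $|1/(2s)|$ is trivially bounded by $1/(2\sigma)\le 1/2$. The main estimate to handle carefully is the integral term. For $s=\sigma+i\tau$ with $\sigma\ge 1$ and $t\ge 0$, factor $s^2+t^2=(s-it)(s+it)$ and note $|s\pm it|=\sqrt{\sigma^2+(\tau\mp t)^2}\ge \sigma$, so $|s^2+t^2|\ge \sigma^2\ge 1$. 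Combined with the standard evaluation $\int_0^\infty t/(e^{2\pi t}-1)\,dt = 1/24$, this gives
\[
    \left|\,2\int_0^\infty \frac{t}{(s^2+t^2)(e^{2\pi t}-1)}\,dt\,\right| \;\le\; \frac{1}{12\sigma^2} \;\le\; \frac{1}{12}.
\]

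Summing the three contributions produces the bound
\[
    \left|\frac{\Gamma'}{\Gamma}(s)\right| \;\le\; \tfrac12\log(1+|s|^2) + \tfrac{\pi}{2} + \tfrac12 + \tfrac{1}{12},
\]
and since $\tfrac{\pi}{2}+\tfrac12+\tfrac{1}{12} < 11/3$, the stated inequality follows. There is no real obstacle here; the only subtlety is the lower bound on $|s^2+t^2|$, which one must verify uniformly in $t$, and that is handled cleanly by the factorization above. An alternative route would be to apply Stirling (Lemma~\ref{Stirling}) at $s+1$ (so that the real part is at least $2$, where differentiation of the remainder is unproblematic) and then use the functional equation $\psi(s)=\psi(s+1)-1/s$; this works equally well but is a hair less direct.
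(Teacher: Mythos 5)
The paper does not give a proof of this lemma; it simply cites it as coming from the proof of Lemma 4 of Ono--Soundararajan \cite{KEN}. So your proof is the only one under evaluation here, and it is correct.

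Your argument via Binet's second integral formula for $\Gamma'/\Gamma$ is clean and complete. The three pieces are all handled soundly: the triangle-inequality bound $|\log s| \le \log|s| + \pi/2$ followed by $\log|s| = \tfrac12\log|s|^2 \le \tfrac12\log(1+|s|^2)$; the trivial bound $|1/(2s)| \le 1/2$; and the integral bound, where the factorization $s^2+t^2=(s-it)(s+it)$ with $|s\pm it|\ge\sigma$ gives the uniform lower bound $|s^2+t^2|\ge\sigma^2\ge 1$, and $\int_0^\infty t/(e^{2\pi t}-1)\,dt = 1/24$ is the standard evaluation via $\Gamma(2)\zeta(2)=\pi^2/6$ after substitution. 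The final numeric check $\pi/2+1/2+1/12 \approx 2.154 < 11/3$ closes the argument; in fact you obtain something slightly sharper than the stated bound, which is harmless. One small caution about the alternative route you sketch at the end: differentiating the paper's version of Stirling (Lemma~\ref{Stirling}) is not actually straightforward, because the remainder $\theta(z)/(6|z|)$ there is only bounded, not controlled in a form that survives differentiation; one would need a Stirling-type expansion stated directly for $\psi=\Gamma'/\Gamma$ or a Cauchy-estimate argument, so the Binet route you chose as primary is genuinely the cleaner one.
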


\begin{lemma}\label{gammabound_symL} 
For $s = \sigma + iT,$ where $\sigma \geq \frac12$ and $|T| \geq 1$, we have the bound
 \begin{equation*}\left| \frac{\gamma'}{\gamma}(s, \Sym^m f) \right| \leq \frac{1}{2}(m+1)\log\left( |T| + \sigma + 1 + (k-1)(1 + {\textstyle\frac{m}{2}}) \right) + \Cr{b_1} m  + \Cr{b_2} + \frac{1}{|T|}, \end{equation*}
where $\Cr{b_1} := \frac{11}{6}+ \frac12 \log 2\pi \leq 2.7523$ and $\Cr{b_2} := \frac{41}{18} + \frac12 \log 2\pi\leq 3.1968$.
\end{lemma}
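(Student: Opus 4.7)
The plan is to reduce the bound on $\left|\frac{\gamma'}{\gamma}(s, \Sym^m f)\right|$ to the Ono--Soundararajan estimate (Lemma \ref{Ono-Sound}) applied to each $\frac{\Gamma'}{\Gamma}$ factor appearing in the logarithmic derivative. First I would expand $\Gamma_{\mathbb{C}}(s) = 2(2\pi)^{-s}\Gamma(s)$ and $\Gamma_{\mathbb{R}}(s) = \pi^{-s/2}\Gamma(s/2)$ in \eqref{gammasconj11} and differentiate logarithmically, writing $\frac{\gamma'}{\gamma}(s, \Sym^m f)$ as a sum of terms of the form $\frac{\Gamma'}{\Gamma}$ at shifted arguments, plus explicit constants $-\log(2\pi)$ per complex factor and $-\tfrac{1}{2}\log \pi$ from the real factor (which is present only in the even case).

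Next I would apply Lemma \ref{Ono-Sound} to each $\frac{\Gamma'}{\Gamma}$ summand. For the complex gamma factors, the shifted arguments have real part at least $\sigma + \tfrac{1}{2}(k-1) \geq 1$, so Lemma \ref{Ono-Sound} applies directly. The hard part will be handling the $\tfrac{1}{2}\frac{\Gamma'}{\Gamma}\!\left(\tfrac{s+r}{2}\right)$ term in the even case, whose argument has real part as small as $\tfrac{1}{4}$ (when $\sigma = \tfrac{1}{2}$ and $r = 0$), placing it outside the range of Lemma \ref{Ono-Sound}. To get around this, I would use the recursion $\frac{\Gamma'}{\Gamma}(z) = \frac{\Gamma'}{\Gamma}(z+1) - \frac{1}{z}$ at $z = \tfrac{s+r}{2}$; the shifted argument then has real part at least $\tfrac{5}{4}$, while the extra term contributes modulus at most $\tfrac{2}{|s+r|} \leq \tfrac{2}{|T|}$. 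This is precisely the source of the $\tfrac{1}{|T|}$ in the claimed bound (after the factor of $\tfrac{1}{2}$ in front of the $\Gamma_{\mathbb{R}}$ term).

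The logarithmic contributions produced by Lemma \ref{Ono-Sound} are of the form $\tfrac{1}{2}\log(1+|s+\alpha|^2)$ for various shifts $\alpha \geq 0$ (with a $\tfrac{1}{4}\log(1+|\tfrac{s+r}{2}+1|^2)$ coming from the $\Gamma_{\mathbb{R}}$ piece). I would bound each of these using the elementary inequality $1+|s+\alpha|^2 \leq (|T|+\sigma+1+\alpha)^2$, valid for $\sigma \geq \tfrac{1}{2}$, $|T| \geq 1$, $\alpha \geq 0$ by a direct expansion; an analogous inequality bounds the $\Gamma_{\mathbb{R}}$ contribution by $\tfrac{1}{2}\log(|T|+\sigma+1+(k-1)(1+\tfrac{m}{2}))$. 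Since the largest shift arising in the complex factors is $(k-1)\tfrac{m}{2} \leq (k-1)(1+\tfrac{m}{2})$, a direct accounting of the number of summands in each parity then shows that the total logarithmic contribution is at most $\tfrac{m+1}{2}\log\!\left(|T|+\sigma+1+(k-1)(1+\tfrac{m}{2})\right)$.

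It then remains to collect the constants. The $\tfrac{11}{3}$ from each application of Lemma \ref{Ono-Sound}, combined with the $-\log(2\pi)$ and $-\tfrac{1}{2}\log \pi$ prefactors, produces a linear function of $m$; a direct numerical comparison then yields the bound $\Cr{b_1}m + \Cr{b_2}$, with the slack $\Cr{b_2}-\Cr{b_1} = \tfrac{4}{9}$ precisely accommodating both the odd-case excess $\Cr{b_1}$ (so that $\Cr{b_1} \leq \Cr{b_2}$) and the slightly larger even-case excess $\tfrac{11}{6} + \tfrac{1}{2}\log\pi$ arising from the $\Gamma_{\mathbb{R}}$ term.
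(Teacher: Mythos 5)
Your proposal follows essentially the same strategy as the paper: decompose $\frac{\gamma'}{\gamma}$ into its $\Gamma_{\mathbb{C}}$ and $\Gamma_{\mathbb{R}}$ pieces, apply the Ono--Soundararajan bound (Lemma \ref{Ono-Sound}) to each $\frac{\Gamma'}{\Gamma}$ term, and handle the $\Gamma_{\mathbb{R}}$ factor (whose argument can have real part $\tfrac14$) via the recursion $\frac{\Gamma'}{\Gamma}(z) = \frac{\Gamma'}{\Gamma}(z+1) - \tfrac{1}{z}$, which is exactly what produces the $\tfrac{1}{|T|}$. The one step where you diverge from the paper is in bounding $\tfrac12\log\prod_j\bigl(1 + |s + \alpha_j|^2\bigr)$: the paper factors out $(k-1)^2$, rewrites the residual product as $\Gamma\bigl(A + \tfrac{m+\delta_2(m)}{2}\bigr)/\Gamma(A)$, and invokes Stirling's formula (Lemma \ref{Stirling}), which introduces an additive $\tfrac{4}{9}$; you simply bound each factor by the largest one. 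Your route is more elementary, avoids Stirling entirely, and is in fact slightly sharper (no $\tfrac{4}{9}$, and $\tfrac{m}{2}(k-1)$ rather than $\bigl(1+\tfrac{m}{2}\bigr)(k-1)$ appears inside the logarithm before you round up). Both routes yield the stated constants.

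One small numerical misstatement in your constant-collection: the even-case excess $\tfrac{11}{6} + \tfrac12\log\pi \approx 2.405$ is actually \emph{smaller} than the odd-case excess $\Cr{b_1} \approx 2.752$, not larger as you claim; and in your route neither case actually requires the full slack $\Cr{b_2} - \Cr{b_1} = \tfrac{4}{9}$, which in the paper arises precisely from the Stirling remainder. Since both excesses are comfortably below $\Cr{b_2}$, this does not affect the correctness of your argument.
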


\begin{proof}
Again let $\delta_2(m)$ be 1 if $m$ is odd and $0$ otherwise. We may write 
\begin{align}   \label{eq:log_deriv_gamma_factor}
    \left| \frac{\gamma'}{\gamma}(s) \right| &\leq \left| -{\frac{1}{2}}(m+1)\log (2\pi) \right|
    + \left|\sum_{j=1}^{\frac{1}{2}(m + \delta_2(m))} \frac{\Gamma'}{\Gamma}\left(
        s + \left(j - {\textstyle \frac{\delta_2(m)}{2}}
    \right)(k-1)\right) \right| \nonumber\\
    & \quad + (1-\delta_2(m)) \left| \frac12 \, \frac{\Gamma'}{\Gamma}\left({ \frac{s+r}{2}}\right) \right| .
\end{align}
We now use Lemma \ref{Ono-Sound} to bound the second and third terms of \eqref{eq:log_deriv_gamma_factor}. First, we see
\begin{align} \label{eq:log_deriv_gamma_factor_sum_Gamma}
    \sum_{j=1}^{\frac{1}{2}(m + \delta_2(m))} & \left|  \frac{\Gamma'}{\Gamma} \left(
        s + \left(j - {\textstyle \frac{\delta_2(m)}{2}}
    \right)(k-1)\right) \right| \nonumber \\
    &\leq \frac{11}{6}(m+\delta_2(m)) + \frac{1}{2}\log \prod_{j=1}^{\frac{1}{2} (m + \delta_2(m))} \left(1 + \left|
        \sigma + iT + \left(j - {\textstyle \frac{\delta_2(m)}{2}} \right)(k-1)
    \right|^2 \right),
\end{align}
where the last term may be bounded by
\begin{align*}
    & \frac{m+\delta_2(m)}{2} \log (k-1) + \log \frac{\Gamma(A + \frac{m+\delta_2(m)}{2})}{\Gamma(A)},
\end{align*}
for $A := \frac{1+\sigma +|T|}{k-1}+1-\frac{\delta_2(m)}{2}$. By Lemma \ref{Stirling} and the bound $\log(1+x)\leq x$, we have

    \begin{align} \label{eq:log_deriv_gamma_factor_stirling}
         \left| \log \frac{\Gamma(A + \frac{m+\delta_2(m)}{2})}{\Gamma(A)} \right|
        & \leq A \log\left(\frac{A+\frac{m+\delta_2(m)}{2}}{A}\right)+\frac{m + \delta_2(m)-1}{2}\log\left(A +\frac{m+\delta_2(m)}{2}\right) \nonumber \\
        & \ \ \ \ \  +\frac12 \log A - \frac{m+\delta_2(m)}{2}+\frac{1}{6}\left( \left(A+\frac{m+\delta_2(m)}{2} \right)^{-1}+A^{-1}\right) \nonumber \\
        & \leq \frac{m + \delta_2(m)}{2}\log \left(A+\frac{m+\delta_2(m)}{2}\right) + \frac{4}{9}.
    \end{align}

We now bound the last term from \eqref{eq:log_deriv_gamma_factor}. Using the identity $s \Gamma(s) = \Gamma(s+1)$ together with Lemma \ref{Ono-Sound}, we find that
\begin{align} \label{eq:log_deriv_gamma_factor_r_contribution}
    \left|\frac12 \frac{\Gamma'}{\Gamma}\left(\frac{s+r}{2}\right)\right| & \leq  \frac12 \left| -\frac{2}{s+r} + \frac{\Gamma'}{\Gamma}\left( \frac{s+r}{2} + 1 \right) \right| \nonumber \\
    & \leq \frac{1}{|T|} + \frac{11}{6} + \frac12 \log \left( 1 + \sigma + |T| + (k-1)\left(1 + \frac{m}{2}\right) \right).
\end{align}

Using the bounds \eqref{eq:log_deriv_gamma_factor_sum_Gamma}, \eqref{eq:log_deriv_gamma_factor_stirling}, and \eqref{eq:log_deriv_gamma_factor_r_contribution} in \eqref{eq:log_deriv_gamma_factor} finishes the proof.
\end{proof}

\begin{lemma}\label{N1T} 
Let $|T| \geq 1$, and let $N_1(T, \Sym^m f)$ be the number of nontrivial zeros $\rho = \beta + i \gamma$ of $L(s,\Sym ^m f)$ satisfying $|T-\gamma| < 1$. If $Q$ satisfies $q_{\Sym^m f} \leq Q^{m+1}$, then for $m \geq 1$, we have
    \begin{align*}
        N_1(T, \Sym^m f) 
        &\leq \sqrt{5} \bigg(\frac{m+1}{2}\log \left(Q\left( |T| + \sigma_0 + 1 + (k-1) \left(1 + \frac{m}{2} \right)\right)  \right)+ \Cr{new1} m  + \Cr{new2} + \frac{1}{|T|}\bigg),
    \end{align*}
where $\Cr{new1} := \Cr{b_1} - \frac{\zeta'}{\zeta}(\sigma_0)\leq3.893 $, $\Cr{new2} :=\Cr{b_2} - \frac{\zeta'}{\zeta}(\sigma_0)\leq 4.337 $, and $\sigma_0 = \frac12(1+\sqrt{5}) \leq 1.61804$.
\end{lemma}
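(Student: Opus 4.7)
The plan is to apply the logarithmic Hadamard product identity \eqref{eq:log_deriv_hadamard_prod} at the point $s = \sigma_0 + iT$, and then to turn that into an upper bound on $N_1(T,\Sym^m f)$ by extracting a positive, uniformly bounded contribution from the zeros counted. Taking real parts in \eqref{eq:log_deriv_hadamard_prod} and invoking \eqref{eq:B}, the unknown constants $B_{\Sym^m f}$ and $\sum_\rho 1/\rho$ drop out to yield
\begin{equation*}
    \sum_\rho \Re \frac{1}{\sigma_0 + iT - \rho}
    = -\Re \frac{L'}{L}(\sigma_0 + iT, \Sym^m f) - \Re \frac{\gamma'}{\gamma}(\sigma_0 + iT, \Sym^m f) - \tfrac{1}{2}\log q_{\Sym^m f}.
\end{equation*}
Since $\sigma_0 > 1$, every term on the left is strictly positive, and the left-hand side equals its own absolute value.

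Next, I would bound the left-hand side from below by $N_1(T,\Sym^m f)/\sqrt{5}$. Writing $u = \sigma_0 - \beta$ and $v = T - \gamma$, each summand equals $u/(u^2 + v^2)$, and over the admissible rectangle $u \in (\sigma_0 - 1, \sigma_0)$, $v \in (-1,1)$, an elementary calculus check shows that the infimum is attained in the limit at the corners. Concretely, the function $g(u) := u/(u^2+1)$ is unimodal with maximum at $u = 1$, so on $(\sigma_0-1,\sigma_0)$ the infimum is $\min\{g(\sigma_0 - 1), g(\sigma_0)\}$; solving $g(\sigma_0-1) = g(\sigma_0)$ gives exactly $\sigma_0^2 - \sigma_0 - 1 = 0$, i.e., $\sigma_0 = (1+\sqrt{5})/2$, the golden ratio, and at this value both endpoints evaluate to $1/\sqrt{5}$ (using $\sigma_0^2 = \sigma_0 + 1$). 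Thus each zero $\rho$ counted by $N_1(T, \Sym^m f)$ contributes at least $1/\sqrt{5}$ to the sum, giving
\begin{equation*}
    \frac{N_1(T,\Sym^m f)}{\sqrt{5}} \;\leq\; \sum_\rho \Re \frac{1}{\sigma_0 + iT - \rho}.
\end{equation*}

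Finally, I would bound the right-hand side of the Hadamard identity from above by applying $|\Re z| \leq |z|$ termwise and collecting the three ingredients already available: \eqref{RTlemma} gives $|L'/L(\sigma_0+iT,\Sym^m f)| \leq -(m+1)\zeta'/\zeta(\sigma_0)$; Lemma~\ref{gammabound_symL} (at $\sigma = \sigma_0$) bounds $|\gamma'/\gamma(\sigma_0+iT,\Sym^m f)|$; and the hypothesis $q_{\Sym^m f} \leq Q^{m+1}$ bounds $\tfrac{1}{2}\log q_{\Sym^m f}$ by $\tfrac{m+1}{2}\log Q$. Combining the $\log Q$ term with the logarithm inside the $\gamma'/\gamma$ estimate merges into the single logarithm appearing in the statement, while the constants $\Cr{b_1} m + \Cr{b_2}$ combine with $-(m+1)\zeta'/\zeta(\sigma_0)$ to give precisely $\Cr{new1} m + \Cr{new2}$ (with the residual $1/|T|$ carried along unchanged).

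The only nontrivial step is the lower-bound analysis for the sum over zeros; the rest is bookkeeping and invocation of the cited bounds. The elegant feature here, and the reason for the unusual appearance of the golden ratio, is that the choice $\sigma_0 = (1+\sqrt{5})/2$ is exactly what equalizes the two corner minima and optimizes the constant $1/\sqrt{5}$ — any other $\sigma_0 > 1$ would yield a strictly worse constant from this simple argument.
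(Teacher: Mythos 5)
Your proposal is correct and follows essentially the same route as the paper: take real parts of the Hadamard product identity at $s = \sigma_0 + iT$, lower-bound the sum over zeros by $N_1(T,\Sym^m f)/\sqrt5$ using the choice $\sigma_0 = (1+\sqrt5)/2$, and upper-bound the right side by the conductor hypothesis, \eqref{RTlemma}, and Lemma~\ref{gammabound_symL}. Your derivation of why $\sigma_0$ is the golden ratio — equalizing $g(\sigma_0 - 1)$ and $g(\sigma_0)$ for $g(u) = u/(u^2+1)$, which gives $\sigma_0^2 - \sigma_0 - 1 = 0$ and then $g(\sigma_0 - 1) = g(\sigma_0) = 1/\sqrt5$ via $\sigma_0^2 = \sigma_0 + 1$ — is a nicer explanation than the paper's, which simply asserts the choice maximizes the infimum. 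One small point: the sign you carry in the Hadamard identity (a leading minus on the three terms) reflects the sign as printed in the paper's equation~\eqref{eq:log_deriv_hadamard_prod}, which the paper's own proof does not use; since you pass to absolute values termwise immediately afterward, the discrepancy is harmless and the bound is unaffected.
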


\begin{proof}
Let $s = \sigma + iT$, where $\sigma \geq 1$ and $|T| \geq 1$. To bound $N_1(T, \Sym^m f)$, 
we observe that $\Re\left( \frac{1}{s-\rho} \right) = \frac{\sigma-\beta}{(\sigma-\beta)^2 + (T-\gamma)^2}$ and use \eqref{eq:log_deriv_hadamard_prod} and \eqref{eq:B} to conclude 
\begin{align} \label{eq:inf_choose_sigma0}
    \left(\inf_{0<\beta<1} \frac{\sigma-\beta}{(\sigma-\beta)^2+1} \right) &N_1(T, \Sym^m f) \leq \sum_{|T-\gamma| < 1} \frac{\sigma-\beta}{(\sigma-\beta)^2+(T-\gamma)^2} \nonumber \\
    &= \Re \left(\frac{1}{2} \log q_{\Sym^m f} + \frac{L'}{L}(s, \mathrm{Sym}^m f) + \frac{\gamma'}{\gamma}(s, \mathrm{Sym}^m f)\right).
\end{align} To maximize $\inf_{0<\beta<1} \frac{\sigma-\beta}{(\sigma-\beta)^2+1}$, we fix $\sigma = \sigma_0 := \frac12(1+\sqrt{5})$. Bounding $|\frac{\gamma'}{\gamma}(s, \Sym^m f)|$ and $|\frac{L'}{L}(s, \Sym^m f)|$ using Lemma \ref{gammabound_symL} and equation \eqref{RTlemma} respectively in \eqref{eq:inf_choose_sigma0}, we obtain the stated bound.
\end{proof}

\begin{lemma} \label{lemma:L_contour} 
Let $T \geq 1$. For the curve $\mathcal{C}$ described at the start of the section, if $q_{\Sym^m f} \leq Q^{m+1}$ for some $Q \geq 1$, we have
\begin{align}
    \Delta_{\mathcal{C}} \arg L(s, \Sym^m f) \nonumber &\leq \Cr{b_4}(m+1) \log (\Cr{b_{100}}(k-1)(m+2)QT) + \frac{\Cr{b_{130}}}{T},
\end{align}
where $\Cr{b_4}:= \frac{9}{2} + \sqrt{5}(2+\pi) \leq 15.998,$ $\Cr{b_{100}} := (\sigma_0+1)\exp(\frac{\Cr{b_{19}}}{\Cr{b_4}}) \leq 17\,555$, and $\Cr{b_{130}} := 31.996$.
\end{lemma}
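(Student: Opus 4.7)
I would decompose the contour $\mathcal{C}$ into three pieces: the bottom horizontal $\mathcal{H}_-$ from $\frac{1}{2} - iT$ to $\sigma_1 - iT$, the right vertical $\mathcal{V}$ from $\sigma_1 - iT$ to $\sigma_1 + iT$, and the top horizontal $\mathcal{H}_+$ from $\sigma_1 + iT$ to $\frac{1}{2} + iT$, so that $\Delta_{\mathcal{C}} \arg L = \Delta_{\mathcal{H}_-} \arg L + \Delta_{\mathcal{V}} \arg L + \Delta_{\mathcal{H}_+} \arg L$. I define $\arg L(s, \Sym^m f)$ along $\mathcal{V}$ via the branch of $\log L$ furnished by its Dirichlet series, which converges absolutely for $\Re(s) > 1$. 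Expanding $\log L$ via the Euler product in \eqref{eq:sym_power_L_Euler_prod} and using the fact that the Satake parameters satisfy $|\alpha_{j, \Sym^m f}(p)| \leq 1$ would give $|\log L(\sigma_1 + it, \Sym^m f)| \leq (m+1)\log \zeta(\sigma_1)$ for every $t$, and hence $|\Delta_{\mathcal{V}} \arg L(s, \Sym^m f)| \leq 2(m+1)\log \zeta(\sigma_1)$. I plan to fix $\sigma_1$ at a value slightly larger than $1$ (possibly at $\sigma_0 = \frac{1+\sqrt 5}{2}$ to match the constant appearing in Lemma \ref{N1T}) so that this contribution is bounded but the horizontal segments remain short.

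For each horizontal segment, the main step is to establish an explicit version of the standard approximation
\begin{equation*}
    \frac{L'}{L}(s, \Sym^m f) \;=\; \sum_{|T - \gamma| \leq 1} \frac{1}{s - \rho} \;+\; E(s, T),
\end{equation*}
valid for $s = \sigma + iT$ with $\sigma \in [\frac{1}{2}, \sigma_1]$, with $|E(s, T)|$ explicitly controlled. I would derive this by evaluating \eqref{eq:log_deriv_hadamard_prod} at both $s$ and the reference point $s_0 = 2 + iT$ and subtracting, so that $B_{\Sym^m f}$ cancels. The resulting identity expresses $\frac{L'}{L}(s) - \frac{L'}{L}(s_0)$ as $-\frac{\gamma'}{\gamma}(s) + \frac{\gamma'}{\gamma}(s_0) - \sum_\rho\bigl(\tfrac{1}{s - \rho} - \tfrac{1}{s_0 - \rho}\bigr)$. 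For the reference-point contribution, \eqref{RTlemma} gives $|\frac{L'}{L}(s_0)| \leq -(m+1)\frac{\zeta'}{\zeta}(2)$, while Lemma \ref{gammabound_symL} controls the gamma-factor terms. For the tail, I would split the sum over $\rho$ according to whether $|T - \gamma| \leq 1$ or $|T - \gamma| > 1$: for the near zeros, $|\frac{1}{s_0 - \rho}|$ is bounded so their contribution costs $N_1(T, \Sym^m f)$ via Lemma \ref{N1T}; for the far zeros, the telescoping bound $|\frac{1}{s - \rho} - \frac{1}{s_0 - \rho}| \leq \frac{2}{|T - \gamma|^2}$ together with a dyadic decomposition and repeated application of Lemma \ref{N1T} at heights $T + k$ would produce an $O(\log)$ bound.

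With this in hand, integrating $\frac{L'}{L}$ along $\mathcal{H}_{\pm}$ and taking imaginary parts gives
\begin{equation*}
    |\Delta_{\mathcal{H}_\pm} \arg L(s, \Sym^m f)| \;\leq\; \pi N_1(T, \Sym^m f) \;+\; \bigl(\sigma_1 - {\textstyle \frac{1}{2}}\bigr)\sup_{\sigma \in [1/2, \sigma_1]} |E(\sigma + iT, T)|,
\end{equation*}
because each near-zero integral $\int_{1/2}^{\sigma_1} \frac{d\sigma}{\sigma + iT - \rho} = \log \frac{\sigma_1 + iT - \rho}{1/2 + iT - \rho}$ contributes at most $\pi$ to the imaginary part. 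Summing the two horizontal contributions yields a term $2\pi N_1(T, \Sym^m f)$, which by Lemma \ref{N1T} is at most $\pi\sqrt{5}(m+1)\log((k-1)Q(m+2)T) + \text{(lower order)}$, thereby producing the $\pi\sqrt{5}$ piece of $\Cr{b_4}$. Together with the vertical bound and the $E$-integral, I would collect all constants, absorb the lower-order $m$-terms and additive constants into the leading $(m+1)\log(\Cr{b_{100}}(k-1)(m+2)QT)$ by enlarging $\Cr{b_{100}}$, and gather the $O(1/T)$ remainders into $\Cr{b_{130}}/T$.

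The main obstacle is the explicit bookkeeping needed to pin down the constants $\Cr{b_4}$, $\Cr{b_{100}}$, and $\Cr{b_{130}}$. Specifically, the tail sum $\sum_{|T - \gamma| > 1} \frac{2}{|T - \gamma|^2}$ must be bounded using Lemma \ref{N1T} applied at shifted heights and then partial summation, with every numerical constant propagated carefully; similarly, the suprema of the $\Gamma$-factor derivatives along $[\frac{1}{2} + iT, \sigma_1 + iT]$ coming from Lemma \ref{gammabound_symL} must be controlled uniformly. The choice of $\sigma_1$ then has to be optimized so that the vertical-segment cost $2(m+1)\log\zeta(\sigma_1)$ and the horizontal error $(\sigma_1 - \frac{1}{2})|E|$ together absorb into the stated constant $\frac{9}{2} + \sqrt{5}(2+\pi)$ rather than being allowed to dominate.
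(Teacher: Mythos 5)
Your decomposition of $\mathcal{C}$, the Dirichlet-series bound $|\Delta_{\mathcal{V}}\arg L| \leq 2(m+1)\log\zeta(\sigma_1)$ for the vertical piece, and the strategy of expressing $\frac{L'}{L}$ on the horizontal segments as a local sum over zeros with $|T-\gamma|\leq 1$ plus an error, integrating each near-zero to at most $\pi$, are all exactly the paper's skeleton. The place where you genuinely diverge is the treatment of the far zeros. You propose a dyadic decomposition, bounding $\sum_{|T-\gamma|>1}|T-\gamma|^{-2}$ by repeated applications of Lemma \ref{N1T} at shifted heights $T+k$, which works in principle but requires summing the $N_1$ bound over $k$ and carefully tracking the convergent but nontrivial constant $\sum_k k^{-2}$. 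The paper instead takes the reference point to be $\sigma_1 + iT$ itself (not $2+iT$), so the far-zero contribution appears as $\sum_{|T-\gamma|\geq 1}\frac{\sigma_1-\sigma}{(s-\rho)(\sigma_1+iT-\rho)}$, and then uses the pointwise comparison
\begin{equation*}
\frac{\sigma_1-\tfrac{1}{2}}{(T-\gamma)^2}\;\leq\;\frac{(\sigma_1-\tfrac{1}{2})\bigl((\sigma_1-1)^2+1\bigr)}{\sigma_1-1}\cdot\frac{\sigma_1-\beta}{(\sigma_1-\beta)^2+(T-\gamma)^2}
\end{equation*}
valid for $|T-\gamma|\geq 1$, to convert the whole far-zero sum in a single step into $\sum_\rho \Re\frac{1}{\sigma_1+iT-\rho}$, which is read off directly from the Hadamard-product identity \eqref{eq:log_deriv_hadamard_prod} and \eqref{eq:B} (the same identity that drives Lemma \ref{N1T}). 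This bypasses the dyadic sum entirely and is what lets the coefficient come out as the clean quantity $\frac{m+1}{2}\bigl(\sqrt5\pi + (\sigma_1-\tfrac12)(2 + g(\sigma_1) + \tfrac{\sqrt5}{\sigma_1-1})\bigr)$, minimized at $\sigma_1 = \tfrac32$ (not $\sigma_0$), yielding $\Cr{b_4} = \tfrac92 + \sqrt5(2+\pi)$. Your route would land on a correct statement but with worse, and messier, numerical constants; the paper's Hadamard comparison is the key trick you would need to discover to match $\Cr{b_4}$.
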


\begin{proof}
Following the example of \cite{D} and \cite{Tr}, we split the contour $\mathcal{C}   $ into three pieces: $\mathcal{C}_1$, $\mathcal{C}_2$, $\mathcal{C}_3$, corresponding to the line segments connecting $\frac{1}{2}-iT  $ to $\sigma_1-iT$, $\sigma_1-iT$ to $\sigma_1+iT$, and $\sigma_1+iT$ to $\frac{1}{2}+iT $, respectively.

We first bound the contribution from the line segment $\mathcal{C}_2$. Since $L$ is symmetric about the real axis, we have $\Delta _{\mathcal{C}_2}\arg L(s,\Sym^m f) = 2 \arg L(\sigma_1+iT, \Sym^m f)$. Using the fact that 
\[
    |\log L(\sigma_1 + iT, \Sym^m f)| \leq\left|\log\zeta(\sigma_1)^{m+1}\right| = (m+1)\log \zeta(\sigma_1)
\]
and writing 
\[
    \log L(\sigma_1+iT, \Sym^m f) = \log |L(\sigma_1+iT, \Sym^m f)|+i \arg L(\sigma_1+iT, \Sym^m f),
\]
we arrive at 
\begin{equation*}
    \Delta _{\mathcal{C}_2}\arg L(s,\Sym^m f) = 2\arg L(\sigma_1+iT, \Sym^m f) \leq  2(m+1)\log \zeta(\sigma_1).
\end{equation*}

We now consider the contributions from the horizontal segments. Again by the symmetry about the real axis, the contributions from the contours $\mathcal{C}_1$ and $\mathcal{C}_3$ are the same, so it suffices to estimate
\begin{equation*}
    \Delta_{\mathcal{C}_3} \arg L(s, \Sym^m f) = -\int_{\frac12 + iT}^{\sigma_1 + iT} \Im \left(\frac{L'}{L} (s, \Sym^m f)\right) ds.
\end{equation*}

Letting $s= \sigma+iT \in \mathcal{C}_3$ (that is, setting $\frac{1}{2} \leq \sigma \leq \sigma_1$), we use \eqref{eq:log_deriv_hadamard_prod} to compute $\frac{\Lambda'}{\Lambda}(s, \Sym^m f) - \frac{\Lambda'}{\Lambda}(\sigma_1 + iT),$ which produces the following equation:

\begin{align} \label{eq:L_contour_log_deriv_L_orig}
 \frac{L'}{L}(s, \Sym^m f)  
 & =  \frac{L'}{L}(\sigma_1+iT, \Sym^m f) + \left( \frac{\gamma'}{\gamma}(\sigma_1+iT, \Sym^m f) -\frac{\gamma'}{\gamma}(s, \Sym^m f) \right)   \nonumber \\ 
 & \ \ \ \ \ - \sum_{|T- \gamma|\geq 1} \frac{\sigma_1-\sigma}{(s-\rho)(\sigma_1+iT-\rho)}+ \sum_{|T- \gamma|\leq 1} \left( \frac{1}{\sigma_1+iT-\rho}-\frac{1}{s-\rho} \right). 
\end{align}

We proceed to bound each of these terms individually. 
We use \eqref{RTlemma} to bound the first term,
and the $\gamma$ contribution to \eqref{eq:L_contour_log_deriv_L_orig} may be bounded with Lemma \ref{gammabound_symL} via triangle inequality. The sum over nontrivial zeros proves more difficult. Using equation \ref{eq:inf_choose_sigma0}, 
as well as the bounds
\begin{equation*}
    \left|  \frac{\sigma_1-\sigma}{(s-\rho)(\sigma_1+iT-\rho)}\right| \leq \frac{\sigma_1 - \sigma}{(T-\gamma)^2}\leq \frac{\sigma_1 - \frac{1}{2}}{(T-\gamma)^2} 
\end{equation*}
and
\begin{equation*}
    \left(\frac{(T-\gamma)^2}{(\sigma_1-\beta)^2 + (T-\gamma)^2} \right) \left( \frac{\sigma_1-\beta}{\sigma_1-\frac{1}{2}}\right)  \geq \frac{1}{(\sigma_1-1)^2 + 1} \left(\frac{\sigma_1-1}{\sigma_1-\frac{1}{2}}\right),
\end{equation*}
we see that
    \begin{align} \label{eq:L_contour_small_distance_bound}
          \Bigg|\sum_{|T-\gamma|\geq 1}\frac{\sigma_1-\sigma}{(s-\rho)(\sigma_1+iT-\rho)} \Bigg| 
         &\leq  \frac{(\sigma_1-\frac{1}{2})((\sigma_1-1)^2+1)}{\sigma_1-1} \bigg( \Re \left(\frac{1}{2} \log q_{\Sym^m f} \right) \nonumber\\
       &  \quad +\Re\left( \frac{L'}{L}(\sigma_1+iT, \mathrm{Sym}^m f) + \frac{\gamma'}{\gamma}(\sigma_1+iT, \mathrm{Sym}^m f)\right) \bigg).
    \end{align}
We may again use Lemma \ref{gammabound_symL} and \eqref{RTlemma} to bound the last term in \eqref{eq:L_contour_small_distance_bound}. 
Finally, considering the local sum over zeros in  \eqref{eq:L_contour_log_deriv_L_orig}, we note that
\begin{equation} \label{eq:log_deriv_L_last_term}
    \Bigg| \sum_{|T-\gamma|\leq 1} \frac{1}{\sigma_1+iT-\rho}\Bigg|  \leq \sum_{|T- \gamma|\leq 1} \frac{1}{\sigma_1-1} = \frac{N_1(T, \Sym^m f)}{\sigma_1-1}. 
\end{equation}

Applying our work from Lemma \ref{gammabound_symL} and \eqref{RTlemma}, \eqref{eq:L_contour_small_distance_bound}, and \eqref{eq:log_deriv_L_last_term} in \eqref{eq:L_contour_log_deriv_L_orig} allows us to give explicit bounds on 
\begin{equation*}
    h(T) := -\frac{L'}{L}(s, \Sym^m f) - \sum_{|T-\gamma|\leq 1} \frac{1}{s-\rho}
\end{equation*}
in terms of $\sigma_1$. Now notice that for each $\rho$, we have
\begin{equation*}
    \int_{\frac{1}{2}+iT}^{\sigma_1+iT}\Im(s-\rho)^{-1} ds = \Delta_{\mathcal{C}_3} \Im(\log (s-\rho)) \leq \pi,
\end{equation*}
which gives the bound 
\begin{align} \label{eq:L_contour_C3_simple}
    \Delta_{\mathcal{C}_3} \arg L(s, \Sym^m f) 
    & =-     \int_{\frac{1}{2}+iT}^{\sigma_1+iT}\Im\left( \frac{L'}{L}(s+iT,\Sym^m f)\right)ds \nonumber \\
&   \leq \pi N_1(T,\Sym^m f)+ \left(\sigma_1-\frac{1}{2}\right) |h(T)|.
\end{align}
We will use Lemma \ref{N1T} in the above expression to bound $N_1(T, \Sym^m f)$. In particular, the main term of \eqref{eq:L_contour_C3_simple} will be
   \begin{equation*}
    \frac{m+1}{2}\left( \sqrt{5} \pi + \left(\sigma_1 - \frac12\right)\left(2 + g(\sigma_1) + \frac{\sqrt{5}}{\sigma_1 -1}\right) \right) \log \left( T + \max(\sigma_0, \sigma_1) + (k-1) \left( 1 + \frac{m}{2} \right) \right),
   \end{equation*} 
where $g(\sigma_1) = \frac{(\sigma_1-\frac12)((\sigma_1-1)^2+1)}{\sigma_1-1}$. We select $\sigma_1 = \frac32$ to minimize this term's coefficient, which becomes $\Cr{b_4} \cdot \frac{m+1}{2}$, where $\Cr{b_4} := \frac{9}{2} + (2 + \pi)\sqrt{5} < 15.998$.

With this value for $\sigma_1$, we can use \eqref{eq:L_contour_log_deriv_L_orig},
Lemma \ref{gammabound_symL}, \eqref{RTlemma}, \eqref{eq:L_contour_small_distance_bound}, and \eqref{eq:log_deriv_L_last_term}
to bound $h(T)$ explicitly; adding the contributions from $\mathcal{C}_1$ and $\mathcal{C}_2$, we find that
\begin{align*}
    \Delta_{\mathcal{C}} \arg L(s, \Sym^m f) & = 2 \Delta_{\mathcal{C}_3} \arg L(s, \Sym^m f) + \Delta_{\mathcal{C}_2} \arg L(s, \Sym^m f) \\
    &\leq \Cr{b_4}(m+1) \log\left( (\sigma_0+1)(k-1)\left(m+2\right)T \right)\\
    & \quad+ 2\Cr{b_9} \log q_{\Sym^m f} + \Cr{b_{14}} m + \Cr{b_{19}} + \frac{\Cr{b_{130}}}{T},
\end{align*}
where $\Cr{b_{130}}  := 31.996$, $\Cr{b_9} := 6.999$,  $\Cr{b_{14}} := 126.416$, and $\Cr{b_{19}} := 140.945$. Replacing $q_{\Sym^m f}$ with $Q^{m+1}$ and using the bound $(k-1)Q  \geq 11$, we arrive at the stated result. 
\end{proof}

\begin{proof}[Proof of Theorem \ref{Thm:RiemannVonMangoldt}]
The first bound \eqref{eq:RVM} follows from \eqref{eq:contour} after using the bounds from Lemmas \ref{lemma:gamma_contour} and \ref{lemma:L_contour}. To obtain \eqref{eq:RVM_200} and \eqref{eq:RVM_1}, we use the fact that $(k-1)Q \geq 11$. Indeed, we can bound the ratio of each summand in \eqref{eq:RVM} with the first term via a computer-assisted calculation, from which \eqref{eq:RVM_200} and \eqref{eq:RVM_1} follow.
\end{proof}

\section{Computing Residues of the Contour Integral} \label{Section:PerronIntegral}

In this section, we bound the integral $\frac{1}{y}\int_x^{x\pm y} \frac{L'}{L}(s,\Sym ^m f)\frac{x^{s+1}}{s(s+1)}$, which as shown in Section \ref{section-main-theorem} constitutes the main component of $\frac{1}{y}\int_x^{x\pm y} \Theta_m(u)du$ from Lemma \ref{erdos-turan}. In particular, we prove Lemmas \ref{asymptoticnontrivial}, \ref{trivialbound}, and \ref{residues}, which bound the contributions from the nontrivial zeros, the trivial zeros, and residues at $s=0,-1$ to the contour integral.

\subsection{Residues at Nontrivial Zeros}

We use the results from Sections \ref{horizontalsection} and \ref{verticalsection} to bound the contribution from the nontrivial zeros $\rho=\beta+i\gamma$ of $L(s,\Sym^m f)$, given by \begin{equation*}
    R_1(x, y, \Sym^m f) := \!\!\!\!\sum_{\text{nontrivial}\ \rho}\!\!\!\! \frac{(x+y)^{\rho+1} - x^{\rho+1}}{y\rho(\rho+1)}.
\end{equation*} This will constitute the largest contribution to the integral in Lemma \ref{integral}. 
\begin{lemma} \label{lemma:sum_over_rho_bounds} 
Let $x, y > 0$, and let $\rho = \beta + i\gamma \in \mathbb{C}$ with $0 < \beta < 1$. Then we have \begin{align*}
    \left|\frac{(x+y)^{\rho+1}-x^{\rho+1}}{y\rho(\rho+1)}\right| \leq \min\left\{\left(1 + \frac{y}{2x}|\rho|\right)\frac{x^{\beta}}{|\rho|}, \left(2\frac{x}{y} + 2 + \frac{y}{x}\right)\frac{x}{|\rho||\rho+1|}\right\}.
\end{align*}
\end{lemma}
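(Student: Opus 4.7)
The plan is to establish the two bounds inside the $\min$ separately, both starting from the integral representation
\[
    (x+y)^{\rho+1} - x^{\rho+1} \;=\; (\rho+1)\int_x^{x+y} t^\rho\, dt,
\]
which reduces the target quantity to $\frac{1}{y|\rho|}\bigl|\int_x^{x+y} t^\rho\, dt\bigr|$ after cancelling the factor $\rho+1$ against the denominator.

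For the first bound, I would change variables via $t = x+sy$ so that the integral becomes $y\int_0^1 (x+sy)^\rho\, ds$, then split the integrand as $(x+sy)^\rho = x^\rho + \bigl((x+sy)^\rho - x^\rho\bigr)$. The $x^\rho$ piece contributes $x^\beta$ in modulus. For the remainder I would use the fundamental theorem of calculus to write
\[
    (x+sy)^\rho - x^\rho \;=\; \rho y\int_0^s (x+uy)^{\rho-1}\, du,
\]
integrate against $ds$ on $[0,1]$, and swap the order of integration so the double integral collapses to $\rho y\int_0^1 (1-u)(x+uy)^{\rho-1}\, du$. Since $\beta < 1$, the map $u\mapsto (x+uy)^{\beta-1}$ is decreasing on $[0,1]$ and hence bounded by $x^{\beta-1}$; combining with $\int_0^1(1-u)\,du = \tfrac{1}{2}$ gives a remainder of at most $\tfrac{1}{2}|\rho|\,y\,x^{\beta-1}$. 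Dividing the sum of the two pieces by $|\rho|$ yields $\bigl(1+\tfrac{y}{2x}|\rho|\bigr)\tfrac{x^\beta}{|\rho|}$.

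For the second bound I would apply the triangle inequality directly at the level of the original expression:
\[
    \left|\frac{(x+y)^{\rho+1}-x^{\rho+1}}{y\rho(\rho+1)}\right| \;\leq\; \frac{(x+y)^{\beta+1} + x^{\beta+1}}{y|\rho||\rho+1|}.
\]
Using $0 < \beta < 1$ together with $x \geq 1$ (which holds in the applications of this lemma), I can replace $\beta+1$ by $2$ in each term, bounding the numerator by $(x+y)^2 + x^2 = 2x^2 + 2xy + y^2$. Dividing by $y$ and factoring $x$ out of the resulting polynomial produces exactly $\bigl(\tfrac{2x}{y}+2+\tfrac{y}{x}\bigr)\tfrac{x}{|\rho||\rho+1|}$. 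Taking the minimum of the two pointwise estimates gives the stated inequality.

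The one subtle point is in the first bound: the crude estimate $|t^\rho|\leq (x+y)^\beta$ inside the integral would only give $(x+y)^\beta/|\rho|$, which has the wrong shape for the balancing of terms in the proof of Theorem \ref{main} (where $y$ is chosen so that $y/x$ is small and one needs the leading order $x^\beta/|\rho|$ plus a controlled linear correction in $y|\rho|/x$). The order-of-integration swap is precisely what yields the sharp constant $\tfrac{1}{2}$ and the correct linear dependence. The second bound, by contrast, requires no such refinement and follows from the triangle inequality alone.
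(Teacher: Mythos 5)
Your proof is correct and follows essentially the same approach as the paper's. Both arguments are built on the representation $\frac{(x+y)^{\rho+1}-x^{\rho+1}}{y\rho(\rho+1)}=\frac{1}{y\rho}\int_x^{x+y}t^\rho\,dt$. For the first bound, the paper peels off $\frac{x^\rho}{\rho}$ via an algebraic identity in $\theta=y/x$ and bounds the remainder by writing it as $\frac{1}{\theta^2}\int_1^{1+\theta}\frac{u^\rho-1}{\rho}\,du$ and using $|(1+u)^\rho-1|\le|\rho|u$, which is the same double-integral structure you handle by Fubini; for the second bound, both use the crude triangle inequality together with $\beta+1\le 2$. You are right that the second bound implicitly requires $x\ge1$; the paper's proof shares the same implicit assumption, which is harmless since $x\ge 3$ in all applications.

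One correction to your closing remark: the crude estimate $(x+y)^\beta/|\rho|$ does not actually have the ``wrong shape.'' Because $0<\beta<1$, one has $(1+y/x)^\beta\le 1+\beta y/x\le 1+y/x$, so the crude bound is $\le\bigl(1+\frac{y}{x}\bigr)\frac{x^\beta}{|\rho|}$, which is pointwise $\le\bigl(1+\frac{y|\rho|}{2x}\bigr)\frac{x^\beta}{|\rho|}$ whenever $|\rho|\ge 2$, i.e., for all but the low-lying zeros. In the application to Lemma \ref{asymptoticnontrivial}, the $\rho$-independent correction $\frac{y x^{\beta-1}}{2}$ is what gets multiplied by $N(T)$, which is larger than $\sum 1/|\rho|$; so the split form of the lemma is a convenient (and slightly lossy) organization of the estimate, not a sharpness that the naive bound misses. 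This does not affect the validity of your proof, since you prove exactly the stated inequality.
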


\begin{proof}
Let $\theta = \frac{x}{y}$ and $\rho = \beta + i \gamma$. To prove the first bound, we write
\begin{align*}
    \frac{(x+y)^{\rho+1}-x^{\rho+1}}{y\rho(\rho+1)} 
    &= \frac{x^\rho}{\rho} + y x^{\rho-1}\, \frac{(1+ \theta)^{\rho+1} + 1- \theta (\rho+1)}{ \rho(\rho+1) \theta^2}
\end{align*}
and observe that 
\begin{equation} \label{eq:theta_bound}
    \bigg| \frac{(1+ \theta)^{\rho+1}-1 - (\rho+1)\theta}{\rho(\rho+1)\theta^2} \bigg| 
    = \bigg| \frac{1}{\theta^2} \int_1^{1+\theta} \frac{u^{\rho}-1}{\rho} du \bigg| \nonumber \leq \frac{1}{\theta^2} \int_0^\theta \frac{1}{|\rho|} |\rho| u \,du \nonumber = \frac12,
\end{equation}
where the inequality follows from the bound $(1 + u)^\rho - 1| \leq |\rho| u$. The second bound follows from \begin{equation*}
    \left| \frac{(x+y)^{\rho+1}-x^{\rho+1}}{y\rho(\rho+1)} \right| \leq \frac{x^{\beta + 1}}{|\rho||\rho+1|} \left( \left( 1 + \frac{y}{x}\right)^{\beta+1}+1 \right) \leq \frac{x}{|\rho||\rho+1|}(2\theta^{-1} + 2 + \theta). \qedhere
\end{equation*} 
\end{proof}

We will also need to estimate $\sum \frac{x^\rho}{\rho}$ and $\sum \frac{1}{\rho(\rho+1)}$, to bound the contribution from the low- and high-lying zeros, respectively. For convenience, write 
\begin{equation*}
    N(T, \Sym^m f) \leq G_1 T\log T + G_2 T + G_3 \log T + G_4 + \frac{G_5}{T},
\end{equation*}
where the $G_i$ are dependent only on $m,k,N$. Their precise values are derived from Theorem \ref{Thm:RiemannVonMangoldt}: 
\begin{align*}
    G_1 = \textstyle{\frac{1}{\pi}}(m+1),\quad   G_2= \textstyle{\frac{1}{\pi}}(m+1) \log \left(\textstyle{\frac{\sqrt{2}}{2\pi e}} \, (k-1) Q(m+2)\right) + \textstyle{\frac{1}{\pi}} \log (m+2),
    \end{align*}
    \begin{align*}
    G_3 = \textstyle{\frac{1}{\pi}} \Cr{b_4} (m+1),\quad    G_4 = \textstyle{\frac{1}{\pi}} \Cr{b_4} (m+1)  \log(\Cr{b_{100}} (k-1)Q(m+2)) , \quad 
    G_5 = \textstyle{\frac{1}{\pi}} \left(\Cr{b_{130}} + \textstyle{\frac{m+3}{6}}\right).
\end{align*}

\begin{lemma} \label{lemma:sum_inv_gamma} 
For $T \geq 200$ and $m \geq 1$, we have
\begin{equation*}
    \sum_{1< |\gamma| < T} \frac{1}{|\gamma|}  \leq \Cr{c_{233}} (m+1) \log^2((k-1)Q(m+2)T) 
\end{equation*}
for $\Cr{c_{233}} := 1.114$, where the sum runs across nontrivial zeros $\rho=\beta+i\gamma$ of $L(s,\Sym^m f)$. 
\end{lemma}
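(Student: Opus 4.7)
The approach is Abel summation against the explicit vertical zero-count from Theorem \ref{Thm:RiemannVonMangoldt}. Define $M(t) := \#\{\rho : 1 < |\gamma| \leq t\}$, which satisfies $M(1) = 0$ and $M(t) \leq N(t, \Sym^m f)$ for all $t \geq 1$. Writing the sum as a Riemann--Stieltjes integral against $M$ and integrating by parts yields
\[
\sum_{1 < |\gamma| < T} \frac{1}{|\gamma|} \;\leq\; \int_{1^+}^{T} \frac{dM(t)}{t} \;=\; \frac{M(T)}{T} + \int_1^T \frac{M(t)}{t^2}\, dt \;\leq\; \frac{N(T,\Sym^m f)}{T} + \int_1^T \frac{N(t,\Sym^m f)}{t^2}\, dt.
\]

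Next I substitute the bound $N(t,\Sym^m f) \leq G_1 t\log t + G_2 t + G_3 \log t + G_4 + G_5/t$ from \eqref{eq:RVM} and evaluate the elementary integrals $\int_1^T (\log t)/t\, dt = \tfrac12 \log^2 T$, $\int_1^T dt/t = \log T$, $\int_1^T (\log t)/t^2\, dt \leq 1$, $\int_1^T dt/t^2 \leq 1$, and $\int_1^T dt/t^3 \leq \tfrac12$. This produces an upper bound of the shape
\[
\tfrac{G_1}{2}\log^2 T \;+\; (G_1 + G_2)\log T \;+\; \bigl(G_2 + G_3 + G_4 + \tfrac{G_5}{2}\bigr) \;+\; \tfrac{G_3\log T + G_4}{T} + \tfrac{G_5}{T^2},
\]
whose dominant term is $\tfrac{m+1}{2\pi}\log^2 T$. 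Setting $L := \log((k-1)Q(m+2)T)$, the leading coefficient $\tfrac{1}{2\pi} \approx 0.159$ sits well below the budget $1.114$, and the cross term contributes roughly $\tfrac{1}{\pi}\log((k-1)Q(m+2))\log T$, again well below the allowance of $2 \cdot 1.114 \cdot \log((k-1)Q(m+2))\log T$ built into $1.114(m+1)L^2$.

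The main obstacle is verifying that the sizeable constants hidden in $G_3$ and $G_4$ (in particular $\Cr{b_4}/\pi \approx 5.09$ and $\log \Cr{b_{100}} \approx 9.77$) do not overflow the gap between the leading coefficient and the target $1.114$. For this I exploit the hypotheses $T \geq 200$, $m \geq 1$, and $(k-1)Q \geq 11$ (the latter noted at the end of Section \ref{verticalsection}): these force $L \geq \log(11\cdot 3 \cdot 200) > 8.79$, so $L^2 \geq 8.79\, L \geq 8.79$. This provides more than enough room to absorb each of the lower-order contributions, and a direct computer-assisted numerical check, in the spirit of the calculation at the end of the proof of Theorem \ref{Thm:RiemannVonMangoldt}, completes the argument.
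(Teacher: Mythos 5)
Your proposal is correct and follows essentially the same route as the paper: partial summation against the explicit zero-count $N(t,\Sym^m f)$ from Theorem \ref{Thm:RiemannVonMangoldt} (in the $G_1,\dots,G_5$ form), followed by elementary integral evaluations and a computer-assisted check of the constant using $(k-1)Q\geq 11$ and $T\geq 200$. One caution: at the extremal point $m=1$, $(k-1)Q=11$, $T=200$, the constant contribution $G_3+G_4\approx 145$ (dominated by $\Cr{b_4}$ and $\log\Cr{b_{100}}$) pushes your total to within about $1$--$2\%$ of the budget $1.114(m+1)L^2\approx 172$, so the claim that there is ``more than enough room'' is overstated — the closing numerical check is genuinely necessary, not a formality, just as in the paper.
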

\begin{proof}
By partial summation, we have
\begin{align*}
  \sum_{1<|\gamma|<T}\frac{1}{|\gamma|}& \leq \frac{N(T, \Sym^m f)}{T} + \int_1^T \frac{N(t, \Sym^m f)}{t^2}dt    \\
  & \leq \frac{1}{2\pi}(m+1) \log^2 ((k-1) Q (m+2)T) \\
  & \quad + \left(G_2 + G_3  + G_4+ \tfrac12 G_5- \frac{1}{2\pi} (m+1) \log^2 ((k-1)Q(m+2))\right)  + \frac{G_5}{2T^2}. 
\end{align*} Noting that $(k-1)Q \geq 11$ and $T \geq 200,$ we can bound the ratio of each term with the first term via a computer-assisted calculation to conclude the stated result.
\end{proof}

\begin{lemma} \label{lemma:sum_inv_gamma2} 
For $T \geq 200$ and $m \geq 1$, we have
\begin{equation*}
    \sum_{|\gamma|>T} \frac{1}{\gamma^2}\leq  \Cr{c_{220}}(m+1)\frac{\log((k-1)Q(m+2)T)}{T}
\end{equation*}
for $\Cr{c_{220}} := 0.753$, where the sum runs across nontrivial zeros $\rho=\beta+i\gamma$ of $L(s,\Sym^m f)$. 
\end{lemma}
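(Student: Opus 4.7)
The plan is to follow essentially the same partial summation strategy used in the proof of Lemma \ref{lemma:sum_inv_gamma}, but integrating against the heavier weight $1/t^2$ and applying integration by parts to transfer the derivative off $N(t, \Sym^m f)$.

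First I would write the sum as a Stieltjes integral against $N(t, \Sym^m f)$:
\[
    \sum_{|\gamma| > T} \frac{1}{\gamma^2} = \int_T^\infty \frac{dN(t, \Sym^m f)}{t^2}.
\]
Integration by parts, using the fact that $N(R, \Sym^m f)/R^2 \to 0$ as $R \to \infty$ (which follows from Theorem \ref{Thm:RiemannVonMangoldt}, since $N(R, \Sym^m f) = O(R \log R)$), gives
\[
    \sum_{|\gamma| > T} \frac{1}{\gamma^2} = -\frac{N(T, \Sym^m f)}{T^2} + 2 \int_T^\infty \frac{N(t, \Sym^m f)}{t^3}\, dt \leq 2 \int_T^\infty \frac{N(t, \Sym^m f)}{t^3}\, dt.
\]

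Next I would plug in the explicit bound $N(t, \Sym^m f) \leq G_1 t \log t + G_2 t + G_3 \log t + G_4 + G_5/t$ recorded just before Lemma \ref{lemma:sum_inv_gamma}, and evaluate the resulting five elementary integrals term by term. The dominant contribution comes from the $G_1 t \log t$ term, yielding
\[
    2 G_1 \int_T^\infty \frac{\log t}{t^2}\, dt = \frac{2 G_1 (\log T + 1)}{T} = \frac{2(m+1)(\log T + 1)}{\pi T},
\]
which already supplies the numerical constant $\tfrac{2}{\pi} \approx 0.6366$ in front of $(m+1)\log((k-1)Q(m+2)T)/T$. The remaining integrals contribute terms of size $O(1/T)$, $O(\log T / T^2)$, $O(1/T^2)$, and $O(1/T^3)$, each carrying factors of $m+1$ or logarithms of $(k-1)Q(m+2)$.

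The main (purely numerical) step is then to absorb all of these secondary contributions into the single expression $\Cr{c_{220}} (m+1) \log((k-1)Q(m+2)T)/T$ with $\Cr{c_{220}} = 0.753$. To do this I would use the standing hypotheses $(k-1)Q \geq 11$ and $T \geq 200$, together with $m \geq 1$, to bound the ratio of every lower-order term to the main term by an explicit constant, exactly as in the computer-assisted calculation at the end of the proof of Lemma \ref{lemma:sum_inv_gamma}. The expected obstacle is not conceptual but arithmetic: verifying that the sum of all these small ratios, added to $2/\pi$, stays below $0.753$. This will be tight, so I would keep $\log T$ and $\log((k-1)Q(m+2))$ separate in intermediate estimates (rather than crudely bounding one by the other) and only combine them at the very end via $\log T + \log((k-1)Q(m+2)) = \log((k-1)Q(m+2)T)$.
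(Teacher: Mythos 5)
Your proposal follows essentially the same route as the paper: partial summation to reduce to $-N(T)/T^2 + 2\int_T^\infty N(t)\,t^{-3}\,dt$, substitution of the explicit bound $N(t) \leq G_1 t\log t + G_2 t + G_3\log t + G_4 + G_5/t$ from Theorem \ref{Thm:RiemannVonMangoldt}, term-by-term integration yielding the leading $\tfrac{2}{\pi}(m+1)\log((k-1)Q(m+2)T)/T$ contribution, and a final computer-assisted absorption of the lower-order terms using $(k-1)Q \geq 11$ and $T \geq 200$. The only cosmetic difference is that you discard the nonpositive boundary term $-N(T)/T^2$ immediately rather than carrying it through the first line, which has no effect on the bound.
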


\begin{proof}
By partial summation, we have
    \begin{align*}
        \sum_{|\gamma|> T} \frac{1}{\gamma^2} &= -\frac{N(T, \Sym^m f)}{T^2} + 2 \int_T^\infty \frac{N(t, \Sym^m f)}{t^3} dt \\
        & \leq \frac{2}{\pi}(m+1) \frac{\log ( (k-1)  Q(m+2) T )}{T} + \frac{2}{\pi T} \left(
            (m+1) \log {\textstyle \frac{\sqrt{2}}{2\pi}}+ \log (m+2) 
        \right) \\
        & \ \ \ \ \ +  \frac{G_3\log T}{T^2} + \frac{G_3 + 2G_4}{2T^2}  + \frac{2 G_5}{3T^3}.
    \end{align*}
Again noting that $(k-1)Q \geq 11$ and $T \geq 200,$ we obtain the stated result.
\end{proof}

We now combine these results to estimate $R_1(x,y, \Sym^m f)$. 
\begin{proof}[Proof of Lemma \ref{asymptoticnontrivial}] Observe that $\sup_{|\gamma|\leq T }\Re(\rho) \leq 1-\eta_m (T)$ by Theorem \ref{theorem:zfr}.
Using Lemma \ref{lemma:sum_over_rho_bounds}, we may  write
\begin{align*}
        \Bigg|\sum_\rho \frac{(x+y)^{\rho+1} - x^{\rho+1}}{y \rho(\rho+1)}\Bigg|
     &\leq \sum_{|\gamma| \leq 1} \left|\frac{x^\rho}{\rho}\right| +  x^{1-\eta_m(T)} \sum_{|\gamma| \leq T} \frac{y}{2x} + x^{1-\eta_m(T)}  \sum_{1 < |\gamma| \leq T}\frac{1}{|\gamma|} \nonumber \\
     & \ \ \ \ \  + \left(2\frac{x}{y} + 2 + \frac{y}{x}\right)x \sum_{|\gamma| > T} \frac{1}{\gamma^2}. 
\end{align*}
In Lemmas \ref{lemma:sum_inv_gamma} and \ref{lemma:sum_inv_gamma2}, we have calculated the values of the sums $\sum_{1<|\gamma|<T}\frac{1}{|\gamma|}$ and  $\sum_{|\gamma|>T}\frac{1}{\gamma^2}$; after we use the bound on $N(T, \Sym^m f)$ from Theorem \ref{Thm:RiemannVonMangoldt} to bound $\sum_{|\gamma| \leq T} \frac{y}{2x}$ for $T \geq 200$, the only remaining piece is $\sum_{|\gamma|<1} \frac{x^\rho}{\rho}  $. For this, we note that 
\begin{equation*}
    \sum_{|\gamma|<1} \frac{x^\rho}{\rho} \leq N(1, \Sym^m f)\cdot \sup_{\rho}\left( \frac{1}{|\rho|}\right) \cdot x^{1-\eta_m(1)}. 
\end{equation*}
The quantity $N(1, \Sym^m f)$ is bounded in Theorem \ref{Thm:RiemannVonMangoldt}; additionally, Corollary \ref{minrho} gives us a lower bound for $\inf_\rho |\rho|$. 
Combining these, we arrive at the upper bound listed as the last summand in the lemma statement.
\end{proof}

\subsection{Residues at Trivial Zeros}

In Lemma \ref{trivialbound}, we bound the contribution to the integral in Lemma \ref{integral} given by the residues of all trivial zeros except those which may exist at $s=0$ and $s=-1$; the latter case is done separately, as the integrand $\frac{L'}{L}(s,\Sym^m f) \frac{x^{s+1}}{s(s+1)}$ has additional poles at these values. These residues are calculated separately in Lemma \ref{residues}.

\begin{proof}[Proof of Lemma \ref{trivial}]
As the trivial zeros correspond exactly with the poles of $\gamma(s, \Sym^m{f})$, the sum of the residues of $\frac{L'}{L}(s,\Sym^m f) \frac{x^{s+1}}{s(s+1)}$ over the trivial zeros $\rho \neq 0,-1$ is given by 
\begin{equation*}
    \sum_{j=1}^{\frac{m+1}{2}} \sum_{\ell = 0}^{\infty} \frac{x ^{-(j-\frac{1}{2})(k-1) - \ell+1}}{((j-\frac{1}{2})(k-1) + \ell ) \, ((j-\frac{1}{2})(k-1) + \ell -1)} 
\end{equation*}
in the case that $m$ is odd, and
\begin{equation*}
    \sum_{j=1}^{\frac{m}{2}} \sum_{\ell = 1}^{\infty} \frac{x ^{-j(k-1) - \ell+1}}{(j(k-1) + \ell )\,(j(k-1) + \ell -1)}  + 
    \sum_{j=1}^{m/2} \frac{\delta_{1,2}(j,k)x ^{-j(k-1) +1}}{j(k-1)  \,(j(k-1)  -1)}
\end{equation*}
in the case $m$ is even, where the $\delta_{1,2}(j,k)$ indicates that the term is to be omitted when $j=1$ and $k=2$; the case where $k=2$ and $m$ is even allows for an additional pole at $s=-1$, which is delegated to the calculation in Lemma \ref{residues}.

From here, it is straightforward to find an upper bound for each case, using the identity
\begin{equation*}
    \frac{d^2}{du^2} \left(\sum_{\ell=0}^{\infty} \frac{u^{-(c+\ell-1)}}{(c+\ell)(c+\ell-1)} \right) = \frac{u^{-c}}{u-1}
\end{equation*}
for $u > 1$ and $c >0$ and integrating. The case in which $k=2$ and $m$ is odd gives the maximum upper bound, namely
\begin{equation*}
\frac{4(m+1)}{3y}\left( \sqrt{x+y}- \sqrt{x}\right) \leq \frac{2(m+1)}{3\sqrt{x}}.\qedhere
\end{equation*}
\end{proof}
 
 \subsection{Additional Residues}
 Finally, we bound the contribution of $R_3(x,y)$, the residues of $\frac{L'}{L}(s, \Sym^m f) \frac{x^{s+1}}{s(s+1)}$ at $s=0,-1$.
 \begin{proof}[Proof of Lemma \ref{residues}]
 We first expand $\frac{x^{s+1}}{s(s+1)}$ about $s=0$ and $-1$, giving
 \begin{equation*}
     \frac{x^{s+1}}{s(s+1)} = \frac{x}{s} + x\log x - x + \cdots,\qquad      \frac{x^{s+1}}{s(s+1)} = -\frac{1}{s+1} -1-\log x  + \cdots,
 \end{equation*}
 respectively. By casework on the value of $m \pmod{4}$ and the size of $k$, we find that $L(s,\Sym ^m f)$ has zeros at $s=-1$ and $s=0$ of orders $2$ and $0$ for $4\nmid m, k=2$; orders $1$ and $1$ for $4 \mid m$, $k=2$; orders 1 and 0 for $4 \nmid m$, $k \geq 4$; and orders 0 and 1 for $4 \mid m, k \geq 4$, respectively. In each case, by expanding the series at each point and evaluating at $x+y$ and at $x$, we find
 \begin{align*}
 \Res\left(\frac{L'}{L}(s,\Sym ^m f) \frac{(x+y)^{s+1}-x^{s+1}}{s(s+1)}\right)  &  \leq |\lambda_0|y + \int_x^{x+y}\log(z)\,dz + \log\left(1+\frac{y}{x}\right)\\ & \leq |\lambda_0|y + y\log(x+y) + \frac{y}{x},
 \end{align*} 
where $\lambda_0$ is defined as either $\frac{L'(0)}{L(0)}$ or $\lim_{s \to 0}(\frac{L'(0)}{L(0)} - \frac{1}{s})$, whichever is well-defined. 
 
We proceed to bound $\lambda_0$. If $4 \nmid n$, then $\frac{\gamma'}{\gamma}$ extends analytically to $s=0$, and so we may write
 \begin{equation*}
 - \frac{L'}{L}(0,\Sym ^m f) = \frac{\log q_{\Sym^m{f}}}{2} + \frac{\gamma'}{\gamma}(0,\Sym ^m f) + \sum_{\rho}\frac{1}{\rho}.
 \end{equation*}
 To estimate $\sum_{\rho} \frac{1}{\rho}$, we first use Corollary \ref{minrho}, which gives us the bound
\begin{equation*}
     \frac{1}{\rho}+ \frac{1}{\overline{\rho}} \leq \frac{2}{\Cr{zfr5}}{(m+7)^2 \log(\Cr{zfr3}(k-1)Q(m+7))}.
\end{equation*} Hence we may bound the sum directly by \begin{equation*}
    \sum_\rho \frac{1}{\rho} \leq  \frac{N(1,\Sym^m f)}{\Cr{zfr5}}{(m+7)^2 \log(\Cr{zfr3}(k-1)Q(m+7))}+ \sum_{|\gamma|>1} \frac{1}{\rho}.
\end{equation*}
The last sum can be estimated using results from Section 5. In particular, we have
    \begin{align*}
         \sum_{|\gamma|\geq 1  }\frac{1}{\rho}
         & = \sum_{|\gamma | \geq 1} \left( \frac{\beta}{\sigma_1-\beta} \cdot \frac{(\sigma_1-\beta)^2 + \gamma^2}{\beta^2+\gamma^2} \right)\left(\frac{\sigma_1-\beta}{(\sigma_1-\beta)^2 + \gamma^2} \right),
      \intertext{from which choosing $\sigma_1 = \frac{3}{2},\  T=0$ yields}
         \sum_{|\gamma|\geq 1  }\frac{1}{\rho} &\leq  5 \bigg( \Re \left(\frac{1}{2} \log q_{\Sym^m f} \right) +\Re\left( \frac{L'}{L}\left(\frac{3}{2}, \mathrm{Sym}^m f\right) + \frac{\gamma'}{\gamma}\left(\frac{3}{2}, \mathrm{Sym}^m f\right)\right) \bigg)\\
         &\leq  5 \bigg(  \frac{m+1}{2} \log Q + (m+1) \left|\frac{\zeta'}{\zeta}\left( \frac{3}{2}\right) \right| + \frac{\gamma'}{\gamma}\left(\frac{3}{2}, \mathrm{Sym}^m f\right)\bigg).
    \end{align*} Thus it only remains to find explicit bounds for $\frac{\gamma'}{\gamma}\left( 0, \Sym^m f\right)$ and $\frac{\gamma'}{\gamma}\left( \frac{3}{2}, \Sym^m f\right)$. The first may be bounded via Lemma \ref{Ono-Sound}, as regardless of the value of $m \pmod{4}$, we may write
\begin{align*}
    \frac{\gamma'}{\gamma}\left( \frac{3}{2}, \Sym^m f\right) 
    & \leq \frac{11(m+1)}{6}+ \frac{m+1}{4} \log \left( 1 + \left( \frac{3}{2}+\frac{(m+1)(k-1)}{2}\right)^2\right) -\frac{(m+1)\log(2\pi)}{2}\\
    & \leq \Cr{res32}(m+1) + \frac{m+1}{2} \log\left( (m+1)(k-1)\right),
\end{align*}
where $\Cr{res32} = \frac{11}{6} +\frac{1}{2} \log(\frac{7}{4}) - \frac{1}{2}\log(2\pi) \leq 1.1943$.

As for $\frac{\gamma'}{\gamma}(0, \Sym^m f)$, we note that $(j-1/2)(k-1)\geq1$ in all cases except $j=1,k=2$, so that we may use Lemma \ref{Ono-Sound} again. As $\Gamma'(\frac{1}{2})/\Gamma(\frac{1}{2}) <-1.9   $, we omit it by non-negativity and reach the bound
\begin{align*}
    \frac{\gamma'}{\gamma}\left( 0, \Sym^m f\right) 
    & \leq \Cr{res32}(m+1)  +  \frac{m+1}{2} \log\left( (m+1)(k-1)\right).
\end{align*}
In the case that $\frac{L'}{L}(s,\Sym^m{f})$ does not extend analytically to 0, the argument must be changed slightly. Indeed, as $L(s, \Sym^m{f})$, we may write
\begin{equation*}
 \Re\left( \sum_\rho \frac{1}{\sigma-\rho}\right) = \frac{\log q_{\Sym^m\!{f}}}{2}+ \left(\frac{\gamma'}{\gamma}(\sigma, \mathrm{Sym}^m f) + \frac{1}{s}\right) + \left(\frac{L'}{L}(\sigma, \mathrm{Sym}^m f)-\frac{1}{s}\right),
\end{equation*} where the second and third terms extend analytically to $\sigma=0$.
Thus, taking $\frac{L'}{L}(s,\Sym^m{f}) - \frac{1}{s}$ as $s\rightarrow 0$, we see that it is indeed bounded by the same quantity, as the contributions from the poles of $\frac{L'}{L}$ and $\frac{\gamma'}{\gamma}$ cancel. Hence in either case we may use the above bound, so that 
\begin{align*}
    \lambda_0 
    & \leq \frac{N(1, \Sym^m f)}{\Cr{zfr5}}{(m+7)^2 \log(\Cr{zfr3}(k-1)Q(m+7))} \\
    &\quad + 3(m+1) \log((k-1)Q(m+1))+ \Cr{reslast}(m+1),
\end{align*}
where $\Cr{reslast}  := 6 \Cr{res32} + |\zeta'(\frac{3}{2})/\zeta(\frac{3}{2})| < 8.6705$. 
\end{proof}

\section{Application to the Atkin--Serre Conjecture} \label{section-atkin-serre}

Here we apply Theorem \ref{main} to the Atkin--Serre conjecture. We require the following lemma.

\begin{lemma}\label{ASfirstlemma}
Let $f$ be as in Theorem \ref{AS}. Then for $x \geq 3$, 
  \begin{equation*}
 \# \{ x < p \leq 2x : p \nmid N,\ \cos \theta_p \in I \} \leq  \Cr{c_{101}}(\pi(2x) - \pi(x)) \left(  \mu_{\ST}(I) + \Cr{c_{100}} \frac{\log((k-1)N \log{x})}{\sqrt{\log{x}}}\right), 
  \end{equation*}
  where $\Cr{c_{101}} := 3.015$.
\end{lemma}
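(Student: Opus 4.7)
The plan is to reduce the count to a difference of two applications of Theorem \ref{main2}. The left-hand side equals $\pi_{f,I}(2x) - \pi_{f,I}(x)$, so writing $\varepsilon(y) := \pi_{f,I}(y) - \mu_{\ST}(I)\pi(y)$ and applying the triangle inequality gives
\[ \pi_{f,I}(2x) - \pi_{f,I}(x) \leq \mu_{\ST}(I)(\pi(2x) - \pi(x)) + |\varepsilon(2x)| + |\varepsilon(x)|. \]
Theorems \ref{main2} and \ref{mainEC} bound $|\varepsilon(y)| \leq \Cr{c_{100}}\,\pi(y)\,g(y)$ for $y \geq 3$, where $g(y) := \log((k-1)N\log y)/\sqrt{\log y}$. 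The proof then reduces to repackaging $\pi(2x)\,g(2x) + \pi(x)\,g(x)$ as a constant multiple of $(\pi(2x) - \pi(x))\,g(x)$.

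The first subtask is to replace $g(2x)$ by $g(x)$. A direct derivative computation shows $g$ is monotonically decreasing in $y$ as soon as $(k-1)N\log y > e^2$, which is easily verified once $x$ is above a small explicit threshold. For parameters below this threshold, the desired inequality is trivial because $\Cr{c_{100}}\,g(x)$ is already large enough that the right-hand side exceeds the crude upper bound $\pi(2x) - \pi(x)$. With $g(2x) \leq g(x)$, one obtains
\[ \pi(2x)\,g(2x) + \pi(x)\,g(x) \leq (\pi(2x) + \pi(x))\,g(x). \]

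The second subtask is to produce an explicit bound $\pi(2x) + \pi(x) \leq \Cr{c_{101}}\,(\pi(2x) - \pi(x))$ with $\Cr{c_{101}} = 3.015$. Asymptotically the ratio tends to $3$, since by the prime number theorem $\pi(2x) \sim 2x/\log(2x)$, $\pi(x) \sim x/\log x$, and $\pi(2x) - \pi(x) \sim x/\log(2x)$. To turn this into an explicit inequality I would plug in Dusart's two-sided bounds on $\pi(y)$ from \cite{Du} (the same source invoked elsewhere in the paper) to extract a threshold $x_0$ above which the ratio is at most $3.015$. Because $\Cr{c_{101}} \geq 1$, the constant simultaneously covers the main term $\mu_{\ST}(I)(\pi(2x) - \pi(x))$.

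The main obstacle is the calibration: the constant $3.015$ is only marginally above the limit $3$, so the threshold $x_0$ must be chosen large enough for Dusart's bounds to tighten the ratio almost to its limit, while simultaneously checking that for every $3 \leq x \leq x_0$ the factor $\Cr{c_{100}}\,g(x) = 58.1\,g(x)$ is large enough to make the right-hand side absorb the trivial estimate $\#\{x < p \leq 2x : \cdots\} \leq \pi(2x) - \pi(x)$. This is a computer-assisted numerical verification rather than a conceptual hurdle, and the comfortable size of $\Cr{c_{100}}$ leaves ample slack.
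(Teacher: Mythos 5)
Your proof is correct and follows essentially the same route as the paper: triangle inequality against Theorem \ref{main2}/\ref{mainEC}, monotonicity of $g(y)=\log((k-1)N\log y)/\sqrt{\log y}$ to fold $g(2x)$ into $g(x)$, and Dusart's two-sided estimates on $\pi$ to establish $\pi(2x)+\pi(x)\leq 3.015\,(\pi(2x)-\pi(x))$ for $x\geq 10^{100}$, with the range $3\leq x\leq 10^{100}$ handled trivially since $\Cr{c_{100}}\,g(x)$ is already larger than $1$ there. One small remark: the threshold concern in your monotonicity step is vacuous in this setting, because the paper works throughout under the normalization $(k-1)N\geq 11$, so $(k-1)N\log x\geq 11\log 3>e^2$ for all $x\geq 3$ and $g$ is decreasing on the full range; no separate small-$x$ case is needed for that step.
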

\begin{proof}
Applying the triangle inequality to $\pi_{f,I}(2x) - \pi_{f,I}(x)$ and using either Theorem \ref{main2} or Theorem \ref{mainEC}, we first arrive at the result 
\begin{equation*}
\pi_{f,I}(2x) - \pi_{f,I}(x) \leq (\pi(2x)+ \pi(x))\left(  \mu_{\ST}(I) + \Cr{c_{100}} \frac{\log((k-1)N \log{x})}{\sqrt{\log{x}}}\right).
\end{equation*}
Now, we use that $\pi(2x) + \pi(x) \sim 3(\pi(2x) - \pi(x)) \sim 3\pi(x)$. In particular, using the bound $ \frac{x}{\log{x}-1.1}<\pi(x) < \frac{x}{\log{x}-1}$ for $x>60\, 184$ given by \cite{Du}, we see that
\begin{align}
    \pi(2x) + \pi(x) & \leq \frac{2x}{\log(2x)-1.1} + \frac{x}{\log{x}-1.1}\nonumber\\
    &\leq \Cr{c_{101}}\left( \frac{2x}{\log(2x)-1} - \frac{x}{\log{x} - 1.1}\right)\nonumber\\
    & \leq \Cr{c_{101}} \left( \pi(2x) - \pi(x)\right), \nonumber
\end{align}
 where the bound holds for $x \geq 10^{100}$ with $\Cr{c_{101}}$ defined as in the statement of the lemma. Noting that the listed bound holds trivially for $3 \leq x \leq 10^{100}$, we arrive at the desired result.
 \end{proof}

\begin{proof}[Proof of Theorem \ref{AS}]
First, note that as $a_f(p) = 2p^{\frac{k-1}{2}}\cos \theta_p$ and since $\ell(x) := \frac{\log((k-1)N \log{x})}{\sqrt{\log{x}}}$ is decreasing in $x$, we see 
\begin{multline*}
    \# \left\{ x < p \leq 2x : |a_f(p)| \leq  2p^{\frac{k-1}{2}} \tfrac{\log ((k-1)N \log p)}{\sqrt{\log p}} \right\} \\ \leq \# \left\{ x<p\leq 2x : |\cos \theta_p| \leq \tfrac{\log((k-1)N \log{x})}{\sqrt{\log{x}}}\right\}.
\end{multline*} The statement is trivially true whenever $\ell(x) > 1$, so we may assume $\ell(x) \leq 1$. Let $I = \left[ \frac{\pi}{2} - \ell(x),\ \frac{\pi}{2}+ \ell(x)\right]$, so that if $\cos \theta_p \in [-\ell(x),\ell(x)]$, then $\theta_p \in I$. Using the Taylor expansion for $\sin^2{\theta}$, we may write

\begin{align*}
   \mu_{\ST}(I) & := \frac{2}{\pi}\int_{\frac{\pi}{2}-\ell(x)} ^{\frac{\pi}{2} + \ell(x)} (\sin^2 {\theta}) \,d\theta = \frac{4}{\pi}\left( \ell(x) - \frac{\ell(x)^3}{3} + \frac{\ell(x)^5}{15} - \cdots  \right),
\end{align*}
 which implies $\mu_{\ST}(I)     \leq  \frac{4}{\pi }\ell(x)$. Hence by Lemma \ref{ASfirstlemma}, the stated result holds for the constant $\Cr{c_{101}}\left(\Cr{c_{100}} + \frac{4}{\pi}\right)\leq 179$.
\end{proof}

\appendix

\section{Conductor of Symmetric Powers of Elliptic Curves} \label{ECappendix}

This section presents a short proof of the upper bound on the arithmetic conductor $q_{\Sym^m f}$ of $L(s, \Sym^m f)$, for $m \geq 1$ stated in Section \ref{section-symmetric-powers}.

\begin{theorem} \label{ECbound}
Let $E/\mathbb{Q}$ be a non-CM elliptic curve with conductor $N=q_{\Sym^1 f}$, and suppose $m \geq 1$. Then we have \begin{align*}
    q_{\Sym^m f} \leq N^{m+1}.
\end{align*}
\end{theorem}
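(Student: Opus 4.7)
The plan is to prove the stated inequality locally: since the arithmetic conductor is multiplicative over primes, the bound $q_{\Sym^m f} \leq N^{m+1}$ reduces to showing that at every rational prime $p$, the local Artin conductor exponent satisfies $f_p(\Sym^m \rho_E) \leq (m+1) f_p(\rho_E)$, where $\rho_E \colon G_{\mathbb{Q}} \to \mathrm{GL}_2(\overline{\mathbb{Q}}_\ell)$ is the $\ell$-adic Galois representation attached to $E$ for any auxiliary prime $\ell \neq p$, and we use that $v_p(N) = f_p(\rho_E)$.

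At primes $p \nmid N$ of good reduction, $\rho_E|_{G_{\mathbb{Q}_p}}$ is unramified, hence so is $\Sym^m \rho_E|_{G_{\mathbb{Q}_p}}$, and both sides vanish. At primes of multiplicative reduction ($v_p(N) = 1$), the local Weil--Deligne representation is (up to unramified twist) the Steinberg representation $\mathrm{St}_2$, whose symmetric $m$-th power is a twist of the $(m+1)$-dimensional Steinberg-type representation $\mathrm{St}_{m+1}$. Its monodromy operator is a single nilpotent Jordan block on an unramified underlying representation, so applying the formula $f_p = \dim V - \dim V^{I_p,\, N=0} + sw$ yields $f_p(\Sym^m \rho_E) = (m+1) - 1 = m \leq m+1$, settling this case.

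The substantive work is at primes of additive reduction ($v_p(N) \geq 2$). I would invoke the classification of $2$-dimensional local Weil--Deligne representations into principal series ($\rho_E|_{G_{\mathbb{Q}_p}} \cong \chi_1 \oplus \chi_2$), special (a Steinberg twist $\chi \otimes \mathrm{St}_2$), and supercuspidal (induced as $\mathrm{Ind}_{G_K}^{G_{\mathbb{Q}_p}} \psi$ for a quadratic extension $K/\mathbb{Q}_p$). The principal series case is immediate from the decomposition $\Sym^m(\chi_1 \oplus \chi_2) \cong \bigoplus_{j=0}^m \chi_1^{m-j}\chi_2^j$, since each summand has conductor at most $\max(f_p(\chi_1), f_p(\chi_2)) \leq f_p(\rho_E)$; the special case is handled in parallel with the multiplicative case, with an extra twist by $\chi$ absorbed.

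The main obstacle is the supercuspidal type at the wildly ramified primes $p \in \{2,3\}$, where $\rho_E$ can have Swan conductor as large as $6$. Here I would use Mackey's formula and explicit symmetric-power identities for induced representations to decompose $\Sym^m \mathrm{Ind}_{G_K}^{G_{\mathbb{Q}_p}} \psi$ into a sum of characters and induced pieces whose Artin conductors can be read off from the conductor of $\psi$, and hence from $f_p(\rho_E)$. A careful accounting of the tame dimension-drop ($\leq m+1$) plus the Swan contributions shows $f_p(\Sym^m \rho_E) \leq (m+1) f_p(\rho_E)$ in all the remaining cases, completing the local verification and hence the theorem.
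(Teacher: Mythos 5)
Your approach is genuinely different from the paper's. The paper does not perform any representation-theoretic decomposition at all: it writes $q_{\Sym^m f} = \prod_p p^{\epsilon_m(p)+\delta_m(p)}$ with $\epsilon_m(p)$ the local tame conductor and $\delta_m(p)$ the local wild conductor, observes that $\epsilon_m(p)\leq m$ in the multiplicative case and $\epsilon_m(p)\leq m+1$ with $\delta_m(p)=0$ for $p\geq 5$ in the additive case, and then simply reads off the wild conductor bounds $\delta_m(2)\leq d_2$ and $\delta_m(3)\leq d_3$ with $d_p=\max\{0,\,v_p(N)/2-1\}(m+1)$ from the explicit Tables 2 and 3 of Martin--Watkins \cite{MW}, who have already carried out the case-by-case computation of local conductors of $\Sym^m$ for every possible local Galois type of an elliptic curve. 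Combining these gives $q_{\Sym^m f}\leq 2^{d_2}3^{d_3}\prod_{p\mid N}p^{m+1}\leq N^{m+1}$. Your plan instead aims to recover the local inequality $f_p(\Sym^m\rho_E)\leq(m+1)f_p(\rho_E)$ from first principles by classifying the local Weil--Deligne representation and decomposing $\Sym^m$ accordingly. What you gain in transparency you pay for in exposure to the full zoo of local types.

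There is a concrete gap in your case analysis that would make the argument fail as stated: at $p=2$, \emph{not every supercuspidal type is dihedral}. Elliptic curves over $\mathbb{Q}$ with additive, wildly ramified reduction at $2$ can have inertia image $\mathrm{SL}_2(\mathbb{F}_3)$ (projective image $A_4$) or a related $S_4$-type, and these exceptional supercuspidal representations are \emph{not} of the form $\mathrm{Ind}_{G_K}^{G_{\mathbb{Q}_2}}\psi$ for any quadratic $K/\mathbb{Q}_2$. Your claimed classification ``principal series / special / supercuspidal induced from a quadratic extension'' is complete only for $p$ odd; at $p=2$ it omits precisely the cases where $v_2(N)$ is largest (up to $8$) and where the bound is under the most pressure. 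Your Mackey-decomposition strategy has nothing to say about these primitive types, so the ``careful accounting'' in the last paragraph is not merely unfinished --- it is aimed at the wrong target for the hardest case. Even in the dihedral supercuspidal case, you assert the final inequality without verifying that the Swan contributions from the $\lfloor(m+1)/2\rfloor$ induced summands (each of whose conductor is $d_{K/\mathbb{Q}_p}+f(K/\mathbb{Q}_p)\,a_K(\psi')$) plus the character piece when $m$ is even actually stay below $(m+1)\bigl(d_{K/\mathbb{Q}_p}+f(K/\mathbb{Q}_p)\,a_K(\psi)\bigr)$; that check is plausible but not automatic and needs to be written out. To repair the proposal you would either need a separate analysis of the $A_4/S_4$ types via the ramification filtration, or --- as the paper does --- simply cite the Martin--Watkins tables, which were computed exactly to cover all of these cases.
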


\noindent
Our bound will rely crucially on the computations of \cite{MW}, and our result is a uniform sharpening of the bound $q_{\Sym^m f} \ll N^{6m}$ from \cite[Appendix A.2]{David}. We write 
\begin{align*}
    q_{\Sym^m f} = \prod_p p^{\epsilon_m(p) + \delta_m(p)},
\end{align*}
where $\epsilon_m(p)$ and $\delta_m(p)$ are the local tame conductor and local wild conductor at $p$ respectively.  
\begin{proof}
We freely use results from \cite[Section 3]{MW}. If $p$ is a prime of multiplicative reduction, then $\epsilon_m(p)=m$ and $\delta_m(p)=0$. If $p$ is of additive reduction, we know that $\epsilon_m(p)\leq m+1$; since $\delta_m(p)=0$ for $p\geq 5$, it suffices to bound the wild conductors at $2$ and $3$. Recall that in the case of additive reduction at $p$, we have $v_p(N) \geq 2$, where $v_p$ denotes the $p$-adic valuation. Let \begin{align*}
    d_2 := \max\left\{0,\frac{v_2(N)}{2} - 1\right\}(m+1) \qquad \text{and} \qquad d_3 := \max\left\{0, \frac{v_3(N)}{2} - 1\right\}(m+1).
\end{align*} By \cite[Table 2]{MW} and \cite[Table 3]{MW}, we see that $\delta_m(2)\leq d_2$ and $\delta_m(3) \leq d_3$, respectively. Thus, we may write
\begin{align*}
q_{\Sym^m f} &\leq 2^{d_2} 3^{d_3} \prod_{p\mid N} p^{m+1}\leq N^{m+1}. \qedhere
\end{align*}
\end{proof}

\section{Table of Constants}\label{appendixconstants}

The following constants are rounded appropriately (up or down) based on their usage in context. Explicit formulas, when available, are given in the first introduction of the constant in the text.
\begin{multicols}{4}
\begin{itemize}
    \item[] $\Cl[abcon]{c_{100}}$: 58.1
    \item[] $\Cl[abcon]{c_{99}}$: 58.084
    
    \item[] $\Cl[abcon]{c_{60}}$: 1.2323

    \item[] $\Cl[abcon]{zfr1}$: 1.721 
    \item[] $\Cl[abcon]{zfr2}$: 3.158 
    \item[] $\Cl[abcon]{zfr3}$: 1.214 
    \item[] $\Cl[abcon]{zfr4}$: 11.63 
    \item[] $\Cl[abcon]{zfr5}$: 0.1435

    \item[] $\Cl[abcon]{b_1}$: 2.753
    \item[] $\Cl[abcon]{b_2}$: 3.1968 
    
    \item[] $\Cl[abcon]{new1}$: 3.893 
    \item[] $\Cl[abcon]{new2}$: 4.337

    \item[] $\Cl[abcon]{b_4}$: 15.998 
        \item[] $\Cl[abcon]{b_{100}}$: 17\,555 
    \item[] $\Cl[abcon]{b_{130}}$: 31.996
 
    \item[] $\Cl[abcon]{b_9}$: 6.999
    \item[] $\Cl[abcon]{b_{14}}$: 126.725
    \item[] $\Cl[abcon]{b_{19}}$: 140.945

    \item[] $\Cl[abcon]{c_{101}}$: 3.015 

    \item[] $\Cl[abcon]{c_{233}}$: 1.114
    \item[] $\Cl[abcon]{c_{220}}$: 0.753
    \item[] $\Cl[abcon]{c_{303}}$: 0.593
    \item[] $\Cl[abcon]{c_{310}}$: 56.662
    \item[] $\Cl[abcon]{res32}$: 1.1943 
    \item[] $\Cl[abcon]{reslast}$: 8.6705 
\end{itemize}
\end{multicols}

\end{document}